\DeclareMathOperator{\graph}{graph}
\DeclareMathOperator{\comp}{cc}
\DeclareMathOperator{\dist}{dist}
\DeclareMathOperator{\di}{d}
\DeclareMathOperator{\diam}{diam}
\DeclareMathOperator{\clos}{clos}
\newcommand{\liap}{\mathcal{L}}
\newcommand{\imax}[1]{\Gamma_{#1}}
\newcommand{\susp}[1]{{#1}'}
\newcommand{\N}{\mathbb{N}}
\newcommand{\Z}{\mathbb{Z}}
\newcommand{\R}{\mathbb{R}}
\newcommand{\K}{\mathcal{K}}
\newcommand{\KC}{\mathcal{C}}
\newcommand{\dder}{\ddot}
\newcommand{\vs}{\mathbb H}
\renewcommand{\epsilon}{\varepsilon}
\theoremstyle{plain}
\newtheorem{thm}{Theorem}[section]
\newtheorem{coro}[thm]{Corollary}
\newtheorem{prop}[thm]{Proposition}
\theoremstyle{definition}
\newtheorem{df}[thm]{Definition}
\theoremstyle{remark}
\newtheorem{rmk}[thm]{Remark}
\newtheorem{ejp}[thm]{Example}
\title{Isolated sets, catenary Lyapunov functions and expansive systems}
\author[A. Artigue]{Alfonso Artigue}
\address{Universidad de la Rep\'ublica, Departamento de Matem\'atica y Estad\'\i sitca del Litoral}
\email{aartigue@unorte.edu.uy}
\date{\today}
\begin{document}

\begin{abstract}
It is a paper about models for isolated sets and the construction of special hyperbolic Lyapunov functions. 
We prove that after a suitable surgery every isolated set is 
the intersection of an attractor and a repeller.
We give linear models for attractors and repellers. 
With these tools we construct hyperbolic Lyapunov functions and metrics around an isolated set
whose values along the orbits are catenary curves. 
Applications are given to expansive flows and homeomorphisms, 
obtaining, among other things, a hyperbolic metric on local cross sections for an arbitrary expansive flow on a compact metric space.
\end{abstract}
\maketitle

\section{Introduction}
A hanging chain describes a curve that is called \emph{catenary}.
Galileo's first approximation to this curve was a \emph{parabola} 
but, after the development of the infinitesimal calculus, this curve was shown to be related with hyperbolic cosines 
and it is not parabolic. 
As shown in \cite{DH} hyperbolic cosines also appear in the expression of the catenary, 
even if gravity is not assumed to be constant but associated with a varying potential $-1/r$, 
which is a more realistic model of gravity.

In the present paper we consider dynamical systems and the purpose is to construct 
Lyapunov functions whose values 
along the orbits 
have the harmony of a hanging chain.
They will be called \emph{catenary functions} and as we will see they are hyperbolic Lyapunov functions. 
We will show that every isolated set admits a catenary function defined on an isolating neighborhood.
The construction of these functions is based on two results.
First, in Theorem \ref{teoBigBang} we show 
that after a cut and paste procedure every isolated set is the intersection of an attractor with a repeller. 
Second, we prove in Theorem \ref{teoSemiConjLinAtt} that attractors and repellers have linear models.
Precise definitions and statements are given in the corresponding sections.
The applications to expansive systems, given in Sections \ref{secExpFlow} and \ref{secLyapHomeo}, 
are natural if an isolated set is found. 
As we will see, the difficulty of this task depends on the form of expansivity that we consider
and if we are dealing with flows or homeomorphisms.

Let us give an example illustrating the main concepts of the paper. 
Consider the differential equations in the plane: 
\begin{equation}
 \label{sis1}
 \left\{
 \begin{array}{l}
 \dot x=x,\\
 \dot y=-y. 
 \end{array}
 \right.
\end{equation}
This system determines a hyperbolic equilibrium point of saddle type at the origin. 
Its solutions are given by $\phi_t(x,y)=(xe^t,ye^{-t})$.
Consider the norm
$\liap(x,y)=|x|+|y|$.
We have that $\liap(\phi_t(x,y))=|x|e^t+|y|e^{-t}$ and 
$\dot \liap(x,y)=-|x|+|y|$. Consequently $\ddot \liap=\liap$. 
As usual, the dots indicate time derivatives.
When a function satisfies $\ddot \liap=\liap$ we call it \emph{catenary function} for the flow $\phi$. 
If $\di$ is the distance induced by the norm $\liap$, we have that
$\ddot \di=\di$, and we call it \emph{catenary metric}.
In this example $\Lambda=\{(0,0)\}$ is an isolated set because there is a 
compact neighborhood $N=[-1,1]\times[-1,1]$ of $\Lambda$, satisfying: 
the whole orbit of a point is contained in $N$ if and only if the point is in $\Lambda$.
In Theorem \ref{teoCatMetSet} we will show that 
every isolated point admits a catenary metric and that 
every isolated set admits a catenary pseudo-metric vanishing on pairs of points of the isolated set. 
This result will be proved for partial flows on metric spaces. 
In Theorem \ref{teoCatFunIso} we show that every isolated set admits a catenary function $\liap$.

The construction of Lyapunov functions is a classical tool for proving 
the asymptotic stability of an equilibrium point of a differential equation. 
In \cite{Massera49} Massera considered the converse problem 
in the setting of autonomous or periodic differential equations 
in $\R^n$.
He showed that every asymptotically stable singular point 
admits a positive and decreasing Lyapunov function of class $C^1$. 
From a topological viewpoint, i.e. Lyapunov functions of class $C^0$, simpler constructions can be made 
even on metric spaces, 
see for example \cites{AS,BaSz,Conley78,Hur,WY}.
In Section \ref{secAttRep} we will show that every attractor admits 
a positive and decreasing Lyapunov function $\liap$ satisfying $\dot\liap=-\liap$, 
which is a key step in the construction of a catenary function for an isolated set.
In Theorem \ref{teoCatAttDiff} we apply Massera's theorem to construct a differentiable Lyapunov function $\liap$ for an 
asymptotically stable equilibrium point of a differential equation in $\R^n$ satisfying 
$\dot\liap=-a\liap$ for a suitable positive constant $a$.

In topological dynamics the role of hyperbolicity can be played by expansivity.
Recall that a homeomorphism $f\colon X\to X$ of a compact metric space is \emph{expansive} 
if there is $\delta>0$ such that $\dist(f^n(x),f^n(y))<\delta$ for all $n\in\Z$ 
implies $x=y$. 
As noted by Utz in \cite{Utz} expansivity is related with isolated sets as follows: 
a homeomorphism is expansive if and only if the diagonal $\Lambda=\{(x,x):x\in X\}$ is 
an isolated set for the homeomorphism $(x,y)\mapsto (f(x),f(y))$ in $X\times X$.
In \cite{Utz} the expression \emph{isolated set} is not used, but in the proof of \cite{Utz}*{Theorem 2.1} 
the concept is clearly present.
For the study of expansive systems Lewowicz \cites{Lew80,Lew89} 
introduced Lyapunov functions, see also \cites{Ur,Vi,Pa}.  
He proved that expansiveness is equivalent with the existence of a function 
$\liap\colon N\subset X\times X\to\R$ defined on a neighborhood $N$ of the diagonal $\Lambda$ and 
satisfying that $\liap$ and $\dder \liap$ vanish on $\Lambda$ and are positive in $N\setminus \Lambda$.
In the discrete time case $\ddot \liap$ may be 
defined as $\dder \liap(x) = \liap(f(x))-2\liap(x)+\liap(f^{-1}(x))$.
We will show in Theorem \ref{catExp} that this function $\liap$ can be constructed 
in such a way that $\dder \liap=\liap$ also holds. 
Moreover, $\liap$ can be evaluated at every small compact subset of $X$ (not only at pairs of points).
In \cite{Fa} Fathi constructed an \emph{adapted hyperbolic metric} 
for an arbitrary expansive homeomorphism of a compact metric space $X$. 
It is a metric $\dist\colon X\times X\to\R$ defining the topology of $X$ for which there are $\delta>0$ and 
$\lambda>1$ such that if $\dist(x,y)<\delta$ then $\dist(f(x),f(y))\geq\lambda\dist(x,y)$ or 
$\dist(f^{-1}(x),f^{-1}(y))\geq\lambda\dist(x,y)$. 
In Theorem \ref{teoExpCatLocMet} we prove that every expansive homeomorphism admits a \emph{catenary local metric}. 
This is a metric $D_x$ defined on a neighborhood of each $x\in X$, varying continuously with $x$ 
and satisfying $\ddot D_x=D_x$.
In Section \ref{secCatMetExp} we study sufficient conditions in order to obtain 
a catenary metric, instead of a local metric, for an expansive homeomorphism.
For dynamical systems with continuous time we consider expansive flows as defined in \cite{BW}. 
In Section \ref{secExpFlow} we state this definition in terms of isolated sets. 
It is done using local cross sections. 
In Theorem \ref{teoCatMetExpFLow} we prove that every expansive flow admits a 
hyperbolic metric of catenary type defined on local cross sections. 

Let us explain the meaning of the \emph{catenary condition}. 
In the continuous-time case $\ddot \liap= \liap$ 
implies that $\liap(\phi_t(x))=ae^t+be^{-t}$ for suitable constants $a,b\in\R$ depending on $x$. 
As a consequence we obtain a function $\dot \liap^2-\liap^2$ that is a constant of motion.
In the discrete-time case, if for a fixed $x$ we define $u_n=\liap(f^n(x))$ we have 
that $u_n=\liap(f^n(x))=\dder \liap(f^n(x))=u_{n+1}-2u_n+u_{n-1}$ 
and $u_{n+1}-3u_n+u_{n-1}=0$. 
If $\lambda_s<1$ and $\lambda_u>1$ 
are the solutions of 
\begin{equation}
 \label{eqPolCat}
 \lambda^2-3\lambda +1=0
\end{equation}
then 
$u_n=a \lambda_s^n+b\lambda_u^n$.
This shows that the catenary property gives us a nice control of 
the hyperbolic behavior of the values that $\liap$ takes along the orbits of a discrete or continuous dynamical system.

Let us now describe the contents of the paper while explaining other results that we prove. 
In Section \ref{secIsolated} we consider isolated sets for partial flows. 
Partial flows appear naturally when the solutions of 
a differential equation are not defined for all $t\in\R$.
For a partial flow we consider its enveloping flow as defined in \cite{Ab}. 
The enveloping flow is an abstract continuation of 
the trajectories that are not defined for all $t\in\R$. 
In general this enveloping is defined in a topological space 
that may not be Hausdorff. 
This can be the case even if the original partial flow is defined on a metric space. 
In Example \ref{ejNoHausdorff} this phenomenon is illustrated.
Applying results from \cite{Ab} we solve the problem of finding Hausdorff enveloping spaces for isolated sets.
We show in Theorem \ref{teoEnvMet} that every isolated set has a neighborhood 
with metrizable enveloping space. 
This result allows us to understand that in the study of an isolated set there is no loss 
of generality if we assume that $\phi$ is a flow instead of a partial flow. 
This section also gives the correct setting for the study of expansive flows in Section \ref{secExpFlow} 
where expansivity is stated in terms of an isolated set of a partial flow that is not a flow. 
This is the reason why we start the paper studying isolated sets for partial flows.
But, the main result of Section \ref{secIsolated} is Theorem \ref{teoBigBang}. 
There, a special compactification of the enveloping flow is constructed that allows us to see 
the isolated set as a Morse set \cite{Conley78}, that is, the intersection of an attractor with a repeller. 
In this construction two fixed points, an attractor and a repeller, are used 
to compactify the space, obtaining something similar with a model of the physical universe 
starting with a Big Bang and ending in a Big Crunch. 

In Section \ref{secAttRep} we consider attractors and repellers. 
We prove that every attractor admits a  Lyapunov function satisfying $\dot \liap=-\liap$.
Also, a pseudo-metric $\di$ satisfying $\dot\di=-\di$ is constructed for an attractor. 
It is a metric if the attractor is a singleton. 
These results are based on the linear models obtained in Section \ref{secLInModel}.
It is well known that attractors admit positive and decreasing Lyapunov functions. 
In Theorem \ref{estAs}
we give a new proof of this result that is based on Whitney's size functions.

In Section \ref{secCatFunMet} we construct catenary functions for isolated sets. 
We prove, Proposition \ref{propCatHyp}, that catenary functions are hyperbolic Lyapunov functions in the sense of \cite{WY}.
In Theorem \ref{teoCatFun} we solve the equation $\ddot \liap=a\liap$ in an isolating neighborhood, 
where $a$ is a positive continuous function such that $\dot a=0$. 
The result is presented as a boundary value problem 
that gives a method to construct more Lyapunov functions of catenary type.
In Theorem \ref{teoCatMetSet}
we show that isolated points admits a catenary metric defined on an 
isolated neighborhood. For an arbitrary isolated set we obtain a pseudo-metric that 
vanishes on each pair of points in the isolated set. 
In Section \ref{subsecFakeSing} we study the structure of a flow near an isolated set.
We show in Theorem \ref{teoSemiConj}
that the dynamics in an isolating neighborhood of an isolated set
is semi-conjugate with a \emph{singular flow box}. 
A first approximation to this concept is as follows.
Let $v$ be a smooth vector field on a manifold $M$, take a non-equilibrium point $p\in M$ 
and a flow box $U$ containing $p$. 
Let $\rho\colon M\to \R$ be a non-negative smooth function vanishing only at $p$. 
For the flow induced by the vector field $\rho v$ we have that $U$ 
is a singular flow box. 
The equilibrium point created in this way is known as a \emph{fake singularity}. 
A generalization of this construction is consider on metric spaces.

The applications to expansive flows mentioned above are given in Section \ref{secExpFlow}. 
In Section \ref{secLyapHomeo} we consider discrete dynamical systems. 
Via suspensions we extend our results for isolated sets of homeomorphisms of metric spaces. 
More applications are given to expansive, cw-expansive homeomorphisms and other variations are considered.

\thanks{I thank Jos\'e Vieitez for useful conversations on Lyapunov functions and hyperbolic metrics of expansive homeomorphisms. 
I thank Dami\'an Ferraro for introducing me to the contents of \cite{Ab} related with enveloping spaces of partial actions.}

\section{An isolating universe} 
\label{secIsolated}

The purpose of this section is to prove that every isolated set can be seen as the intersection of 
an attractor and a repeller 
in what we call an \emph{isolating universe} for the isolated set. Such an intersection is called a \emph{Morse set} in \cite{Conley78}.
Let us give an example that illustrates what we will do. 
Consider the equations
\[
 \left\{
 \begin{array}{l}
  \dot x= \sin^2x+y^2\\ \dot y=0 
 \end{array}
 \right.
\]
in the cylinder $X=(\R/\pi)\times \R$. 
We have an equilibrium point at $(0,0)$ and an isolated set $\Lambda=\{(0,0)\}$. 
Consider $N=[-1,1]\times [-1,1]$. 
It is true that $N$ is an isolating neighborhood of $\Lambda$ 
but it is also true that every trajectory always returns to $N$. 
It will simplify many arguments and constructions to remove these recurrences.
When we restrict the dynamics to $N$ we obtain what is called a \emph{partial flow}. 
Since we are interested in the dynamics near the isolated set it is natural to consider partial flows instead of flows. 
Let us continue with the example. 
Once we have an isolating neighborhood as the rectangle $N$ 
we can abstractly continue the trajectories. 
This is the step 2 in Figure \ref{figSurgery}.
Now we compactify the space by adding two points. 
After this procedure we will see $\Lambda$ as the intersection of an attractor set
and a repeller set indicated with dotted lines in the final step of Figure \ref{figSurgery}. 
\begin{figure}[ht]
\center 
\includegraphics{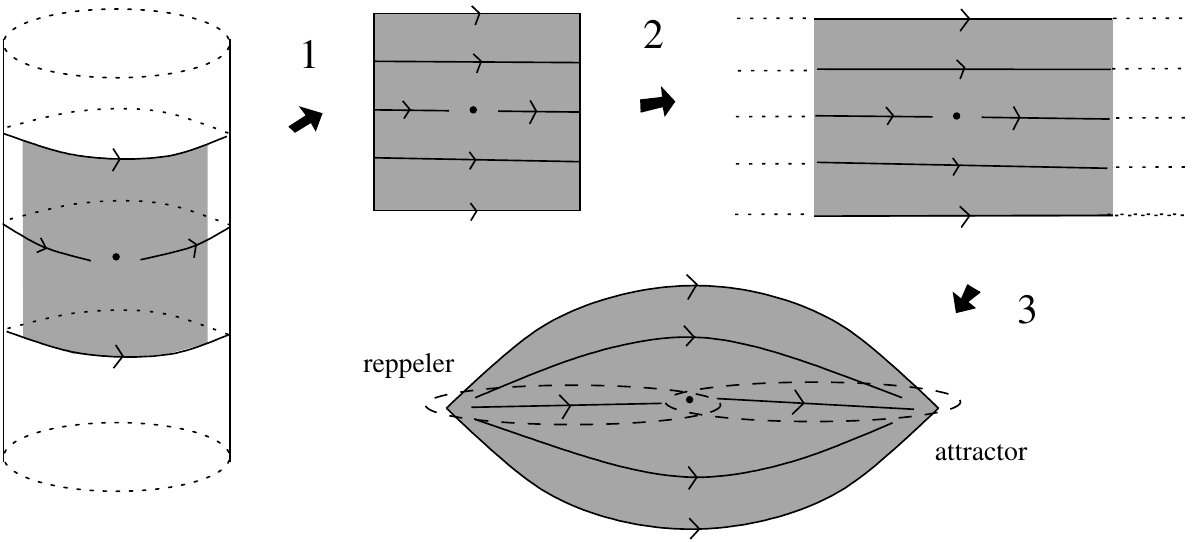}  
\caption{Surgery for the construction of an isolating universe. 
Starting with an isolated set the first step is to find a suitable isolating neighborhood, it can be an isolating block or a flow convex neighborhood. 
We obtain a partial flow.
Next, extend the trajectories without introducing recurrences, this is the enveloping flow. 
Finally add two singular points that compactify the space.}
\label{figSurgery}
\end{figure}

\subsection{Partial flows}
We start introducing partial flows and its basic properties.
Let $(X,\dist)$ be a metric space and consider an open set $\Gamma\subset \R\times X$.
\begin{df}A \emph{partial flow} on $X$ is a continuous function $\phi\colon\Gamma\to X$ 
such that: 
\begin{enumerate}
  \item for all $x\in X$ the set $\Gamma_{x}=\{t\in\R:(t,x)\in \Gamma\}$ is connected,
  \item $0\in \Gamma_{x}$ and $\Gamma_{\phi_t(x)}=\Gamma_{x}-t$ for all $(t,x)\in\Gamma$, 
  \item $\phi_0(x)=x$ for all $x\in X$ and 
$\phi_s(\phi_t(x))=\phi_{s+t}(x)$ whenever $s,t,s+t\in \Gamma_{x}$.
\end{enumerate}
If $\Gamma=X$ we say that $\phi$ is a \emph{flow}. 
\end{df}
In the context of differential equations $\Gamma_{x}$ is the \emph{maximal interval} of the solution through $x$. 

\subsubsection{Restricted flow}
\label{secRestFlow}
Let $\phi\colon\Gamma\to X$ be a partial flow on the metric space $X$. 
Consider an open set $U\subset X$ and 
define for $t\in\R$ the interval $J(t)=[0,t]$ if $t\geq 0$ and $J(t)=[t,0]$ for $t< 0$.
Consider the open set
$$\Gamma_U=\{(t,x)\in \R\times X: \phi_{J(t)}(x)\subset U\}$$
and define the partial flow $\psi=\phi|U\colon \Gamma_U\to X$ as 
$\psi_t(x)=\phi_t(x)$ if $x\in U$ 
and $\phi_{J(t)}(x)\subset U$.  
In this case we say that $\psi$ is the \emph{restriction} of $\phi$ on $U$.

\begin{rmk}
  In \cite{Conley78} there is a similar concept called \emph{local flow}. 
  It is essentially the restriction of a flow. 
  As we said, for the study of expansive flows in Section \ref{secExpFlow} 
   we need to start the theory from a partial flow..
\end{rmk}

\subsubsection{Morphisms of partial flows}
For $i=1,2$ let $\phi^i\colon\Gamma^i\to X_i$ be two partial flows.
A \emph{semi-conjugacy} is a continuous surjection 
$h\colon X_1\to X_2$ such that:
\begin{enumerate}
  \item $\Gamma^1_{x}=\Gamma^2_{h(x)}$ for all $x\in X_1$ and 
  \item $h(\phi^1_t(x))=\phi^2_t(h(x))$ for all $x\in X_1$ and for all $t\in \Gamma^1_{x}$.
\end{enumerate}
If in addition $h$ is a homeomorphism then $h$ is a \emph{conjugacy}.
For the partial flows $\phi^1,\phi^2$ as before define 
\[
  X_i^+=\{x\in X_i:\R^+\subset\Gamma^i_{x}\}
\]
and 
\[
  X_i^-=\{x\in X_i:\R^-\subset\Gamma^i_{x}\}
\]
for $i=1,2$.
\begin{prop}
  If $h\colon X_1\to X_2$ is a semi-conjugacy then 
  $h(X_1^+)=X_2^+$ and $h(X_1^-)=X_2^-$.
\end{prop}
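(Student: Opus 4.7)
The plan is very short: the conclusion is essentially a direct unwinding of the two defining conditions of a semi-conjugacy, together with surjectivity of $h$. I will prove both set equalities by double inclusion, handling $X_i^+$ in detail and observing that the argument for $X_i^-$ is identical after replacing $\R^+$ with $\R^-$.

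For the inclusion $h(X_1^+) \subset X_2^+$, I would pick $x \in X_1^+$, so $\R^+ \subset \Gamma^1_x$ by definition. Condition (1) of semi-conjugacy gives $\Gamma^1_x = \Gamma^2_{h(x)}$, so $\R^+ \subset \Gamma^2_{h(x)}$, hence $h(x) \in X_2^+$. For the reverse inclusion $X_2^+ \subset h(X_1^+)$, I would take $y \in X_2^+$ and use the surjectivity of $h$ to obtain some $x \in X_1$ with $h(x) = y$. Then again $\Gamma^1_x = \Gamma^2_{h(x)} = \Gamma^2_y \supset \R^+$, so $x \in X_1^+$ and $y = h(x) \in h(X_1^+)$.

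The argument for $X_i^-$ is the same verbatim, replacing $\R^+$ with $\R^-$ throughout. Note that the commutation relation $h \circ \phi^1_t = \phi^2_t \circ h$ (condition (2)) is not needed at all; only the equality of maximal intervals of definition and the surjectivity of $h$ are used. The main obstacle is therefore nonexistent: this statement is really a sanity check that the definition of semi-conjugacy was set up so that forward-complete and backward-complete points are preserved. I would keep the written proof to a few lines.
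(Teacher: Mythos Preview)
Your proof is correct and follows the same approach as the paper: use condition (1) of the definition of semi-conjugacy to transfer the inclusion $\R^+\subset\Gamma^1_x$ to $\R^+\subset\Gamma^2_{h(x)}$. In fact you are more thorough than the paper, which only writes out the inclusion $h(X_1^+)\subset X_2^+$ and dismisses the remainder as ``similar''; your explicit use of surjectivity for the reverse inclusion is exactly what is needed.
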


\begin{proof}
If $x\in X^+_1$ then $\R^+\subset\Gamma^1_{x}$. 
Therefore, $\Gamma^1_{x}=\Gamma^2_{h(x)}$ because $h$ is a semi-conjugacy. 
Thus $\R^+\subset\Gamma^2_{h(x)}$ and $h(x)\in X^+_2$.
The rest of the proof is similar.
\end{proof}

\subsubsection{Extension of solutions}
Let $\phi\colon\Gamma\to X$ be a partial flow.
As in the theory of differential equations we can prove the following result.

\begin{prop}
\label{propExtSol}
If $\Gamma_{x}=(t_1,t_2)$ and $t_2$ is finite then $\phi_{s_n}(x)$ has no limit point for all $s_n\to t_2$. 
Similarly for $s_n\to t_1$ if $t_1$ is finite.
\end{prop}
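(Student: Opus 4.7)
The plan is to argue by contradiction, exactly as in the classical ODE proof of maximal-interval escape, using only the openness of $\Gamma$, continuity of $\phi$, and the algebraic identity $\Gamma_{\phi_t(x)}=\Gamma_x-t$ in axiom (2). Suppose $t_2<\infty$ and that $\phi_{s_n}(x)\to y$ for some sequence $s_n\to t_2^-$. The goal is to show this forces $t_2$ to admit an extension, contradicting the assumption that $(t_1,t_2)$ is the full domain $\Gamma_x$.

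The first step is to use that $\Gamma$ is open and that $(0,y)\in\Gamma$ (since $0\in\Gamma_z$ for every $z\in X$). This provides an $\varepsilon>0$ and an open neighborhood $V$ of $y$ with $(-\varepsilon,\varepsilon)\times V\subset\Gamma$. The next step is to pick $n$ large enough so that simultaneously $\phi_{s_n}(x)\in V$ and $s_n>t_2-\varepsilon$; both are achievable by the assumed convergence $\phi_{s_n}(x)\to y$ and $s_n\to t_2$.

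Setting $z=\phi_{s_n}(x)$, the choice of $V$ gives $(-\varepsilon,\varepsilon)\subset\Gamma_z$, while axiom (2) yields $\Gamma_z=\Gamma_x-s_n=(t_1-s_n,\,t_2-s_n)$. Comparing the right endpoints forces $\varepsilon\le t_2-s_n$, i.e. $s_n\le t_2-\varepsilon$, which directly contradicts $s_n>t_2-\varepsilon$. The case $s_n\to t_1^+$ is symmetric, applying the same argument to the reversed-time partial flow (or, concretely, using $(-\varepsilon,\varepsilon)\subset\Gamma_z$ and comparing left endpoints).

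I do not anticipate a genuine obstacle: the only subtle point is confirming that the translation identity $\Gamma_{\phi_t(x)}=\Gamma_x-t$ (which is stronger than the bare cocycle property) is exactly what rules out a one-sided extension, so one must be careful to invoke axiom (2) rather than trying to glue $\phi$ and a locally defined trajectory through $y$ by hand. Once the identity is used, the rest is a short comparison of intervals.
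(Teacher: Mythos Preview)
Your proposal is correct and follows essentially the same route as the paper: argue by contradiction, use openness of $\Gamma$ at $(0,y)$ to obtain a uniform time-interval $(-\varepsilon,\varepsilon)$ available at all points near $y$, then invoke axiom~(2) to translate this back to $x$ and contradict finiteness of $t_2$. Your endpoint comparison is slightly crisper than the paper's phrasing (which asserts $\R^+\subset\Gamma_x$), but the idea is identical.
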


\begin{proof}
  By contradiction assume that there is $s_n\in \imax{x}$ with $s_n\to t$ and $\phi_{s_n}(x)\to y$. 
  Since $\phi$ is defined on an open set $\Gamma$ we have that there are $\epsilon,\tau>0$ 
  such that if $\dist(z,y)<\epsilon$ then $(-\tau,\tau)\subset \imax{z}$. 
  Since $\phi_{s_n}(x)\to y$ we have that $\R^+\subset \imax{x}$, this is a consequence of (2) in the definition of partial flow. 
  This is a contradiction because we assumed that $t_2$ is finite. 
  The case of $t_1$ finite is analogous.
\end{proof}

\subsection{Isolated sets} 
Consider $\phi\colon\Gamma\to X$ a partial flow on the metric space $X$.
A subset
$\Lambda\subset X$ is $\phi$-\emph{invariant} 
if given $x\in \Lambda$ and $t\in\Gamma_x$ then $\phi_t(x)\in\Lambda$. 

\begin{rmk}
  If $\Lambda$ is $\phi$-invariant and compact then $\Gamma_x=\R$ for all $x\in\Lambda$. 
  It follows by Proposition \ref{propExtSol}.
\end{rmk}

\begin{df}
We say that $\Lambda$
is an \emph{isolated set} 
if there is a compact neighborhood $N$ of $\Lambda$ such that 
$\phi_{\imax{x}}(x)\subseteq N$ implies $x\in\Lambda$. 
In this case $N$ is an
\emph{isolating neighborhood} of $\Lambda$.
\end{df}
\begin{rmk}
  Every isolated set is compact and $\Gamma_x=\R$ for all $x\in \Lambda$.
\end{rmk}

\begin{prop}
If $N$ is an isolating neighborhood, $x\in N$, $\imax{x}=(t_1,t_2)$ and $t_1$ is finite then there is 
$t\in (t_1,0)$ such that $\phi_t(x)\notin N$. 
Analogously, if $t_2$ is finite then there is 
$t\in(0,t_2)$ such that $\phi_t(x)\notin N$. 
\end{prop}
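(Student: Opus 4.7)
The plan is to argue by contradiction, using compactness of $N$ together with the extension result Proposition \ref{propExtSol}, which forbids limit points of $\phi_{s_n}(x)$ when $s_n$ approaches a finite endpoint of the maximal interval.

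Focusing on the first case, suppose $\Gamma_x=(t_1,t_2)$ with $t_1$ finite, and assume toward a contradiction that $\phi_t(x)\in N$ for every $t\in(t_1,0)$. Pick any sequence $s_n\in(t_1,0)$ with $s_n\to t_1$. Then $\phi_{s_n}(x)\in N$ for every $n$, and since $N$ is compact (this is part of what it means to be an isolating neighborhood), we may extract a subsequence converging to some $y\in N$. In particular, the sequence $\phi_{s_n}(x)$ has a limit point.

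On the other hand, Proposition \ref{propExtSol} applied to the finite endpoint $t_1$ says that no such limit point can exist. This contradiction shows that some $t\in(t_1,0)$ must satisfy $\phi_t(x)\notin N$, which is the first assertion. The second assertion, when $t_2$ is finite, is completely symmetric: replace $(t_1,0)$ by $(0,t_2)$ and pick $s_n\to t_2$ instead; the same compactness argument together with the other half of Proposition \ref{propExtSol} gives the conclusion.

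There is no real obstacle here; the only subtlety worth noting is that the statement of Proposition \ref{propExtSol} is exactly the tool one wants, so the proof reduces to observing that an orbit that stayed in a compact set up to a finite endpoint would violate that proposition. The hypothesis that $x\in N$ is used only implicitly to give a reason to look at the orbit inside $N$; the argument itself only needs that $N$ is compact and that the relevant half-orbit lies in $N$.
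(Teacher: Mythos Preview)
Your proof is correct and is exactly the argument the paper intends: the paper's own proof simply says ``It follows by Proposition \ref{propExtSol},'' and what you have written is precisely the contradiction/compactness argument that unpacks this reference.
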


\begin{proof}
It follows by Proposition \ref{propExtSol}.
\end{proof}

\subsection{Isolated points} 
\label{secIsoPOint} 
From our viewpoint, that is the construction of Lyapunov functions on an isolating neighborhood, 
it is not important the dynamics inside the isolated set $\Lambda$. 
Therefore, we will explain a standard procedure that collapses this set to a point. 
\begin{df} 
If $\{p\}$ is an isolated set we say that $p$ is an \emph{isolated point}.  
\end{df}
Given an isolated set $\Lambda$ of a partial flow $\phi$ consider an isolating neighborhood $N$ 
and the equivalence relation $\sim$ in $N$ generated by $x\sim y$ if $x,y\in\Lambda$.
Define $M=N/\sim$ with the quotient topology and $\pi\colon N\to M$ the projection. 
Since $\Lambda$ is invariant by $\phi$, a partial flow $\psi$ in $M$ is defined by $\psi_t(\pi(x))=\pi(\phi_t(x))$.

\begin{rmk}
  The projection $\pi$ is a semi-conjugacy between $\phi$ and $\psi$ and 
  $\Lambda$ is an isolated point of $\psi$.
\end{rmk}

\subsection{Flow convexity}
\label{dfAdNei}
An open set $U\subset X$ is $\phi$-\emph{convex} if 
$\phi_{[0,t]}(x)\subset\clos(U)$ with $x,\phi_t(x)\in U$ 
implies $\phi_{[0,t]}(x)\subset U$.
Given a set $A\subset X$ and $x\in A$ denote by $\comp_x(A)$ the connected component of $A$ that contains the point $x$.

\begin{prop}
\label{propConvex}
If $\Lambda$ is an isolated set and $N$ is an isolating neighborhood 
then there is a $\phi$-convex open set $U$ such that $\Lambda\subset U\subset N$.
\end{prop}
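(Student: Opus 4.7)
The plan is to take $U$ to be a short-time flow-saturation of a small compact neighborhood of $\Lambda$, calibrated by a tube-lemma argument so that its closure sits comfortably inside $\mathrm{int}(N)$.

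First, I would exploit that $\Lambda$ is compact and $\phi$-invariant with $\Lambda\subset\mathrm{int}(N)$: for any $T>0$ one has $\phi_{[-T,T]}(\Lambda)=\Lambda\subset\mathrm{int}(N)$, so the preimage of $\mathrm{int}(N)$ under the flow map $(t,x)\mapsto \phi_t(x)$ is an open subset of $\Gamma$ containing $[-T,T]\times \Lambda$. A standard tube-lemma argument (together with Proposition \ref{propExtSol} to ensure $[-T,T]\subset \imax{x}$ near $\Lambda$) then produces a compact neighborhood $K$ of $\Lambda$ with $\Lambda\subset\mathrm{int}(K)$, $K\subset\mathrm{int}(N)$, and
\[
\phi_{[-T,T]}(K)\subset \mathrm{int}(N).
\]

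Next I would set
\[
U := \phi_{(-T,T)}\bigl(\mathrm{int}(K)\bigr) \;=\; \bigcup_{s\in(-T,T)}\phi_s\bigl(\mathrm{int}(K)\bigr).
\]
Each $\phi_s$ is a homeomorphism from its open domain onto its image, so $\phi_s(\mathrm{int}(K))$ is open; hence $U$ is open. Taking $s=0$ gives $\Lambda\subset\mathrm{int}(K)\subset U$, and clearly $U\subset\phi_{[-T,T]}(K)\subset\mathrm{int}(N)\subset N$. Since $\phi_{[-T,T]}(K)$ is the continuous image of the compact $[-T,T]\times K$ it is compact, so $\clos(U)\subset\phi_{[-T,T]}(K)\subset\mathrm{int}(N)$.

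To verify $\phi$-convexity, suppose $x,\phi_t(x)\in U$ and $\phi_{[0,t]}(x)\subset\clos(U)$. Write $x=\phi_{s_0}(y_0)$ and $\phi_t(x)=\phi_{s_1}(y_1)$ with $y_i\in\mathrm{int}(K)$ and $s_i\in(-T,T)$. I would consider the set
\[
J:=\{r\in[0,t] : \phi_r(x)\in U\},
\]
which is open in $[0,t]$ and contains $0$ and $t$. For each $r\in[0,t]$ one has $\phi_r(x)=\phi_{s_0+r}(y_0)$; the witness $y_0$ yields $r\in J$ whenever $s_0+r\in(-T,T)$, and symmetrically the witness $y_1$ handles $r$ near $t$. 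Showing $J=[0,t]$ amounts to covering the intermediate range, which is where the bulk of the work lies: here I would use $\phi_r(x)\in\clos(U)=\phi_{[-T,T]}(K)$ to obtain \emph{some} representation $\phi_r(x)=\phi_\tau(\eta)$ with $\tau\in[-T,T]$, $\eta\in K$, and then upgrade it to $\tau\in(-T,T)$, $\eta\in\mathrm{int}(K)$ by perturbing the witness along the orbit; the main obstacle is exactly this upgrade at ``boundary representations'', which can be controlled by choosing $T$ large relative to the transit time of orbits across $K$ and by slightly shrinking $K$ to remove orbit tangencies with $\partial K$. Once this is established, a connectedness argument on $[0,t]$ yields $J=[0,t]$, i.e.\ $\phi_{[0,t]}(x)\subset U$.
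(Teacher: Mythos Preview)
Your proposal has a genuine gap precisely where you flag it. The ``upgrade'' from a boundary representation $\phi_r(x)=\phi_\tau(\eta)$ with $(\tau,\eta)\in[-T,T]\times K$ to one with $(\tau,\eta)\in(-T,T)\times\mathrm{int}(K)$ is the entire content of $\phi$-convexity here, and neither of your suggested remedies is available. ``Choosing $T$ large relative to the transit time of orbits across $K$'' is impossible: points near $W^s(\Lambda)\cup W^u(\Lambda)$ have arbitrarily long sojourn in $K$, so no finite $T$ dominates all transit times. ``Shrinking $K$ to remove orbit tangencies with $\partial K$'' presupposes a transversality mechanism that simply does not exist for partial flows on general metric spaces. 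In fact, there is no reason a tube-lemma $K$ should have the property that each orbit meets $\mathrm{int}(K)$ in a single arc; once it meets it in two arcs separated by a gap longer than $2T$, your sliding-window set $J$ can fail to be all of $[0,t]$ even under the hypothesis $\phi_{[0,t]}(x)\subset\clos(U)$. More tellingly, your convexity argument never invokes that $\Lambda$ is \emph{isolated}; you only use compactness and invariance of $\Lambda$. Since the conclusion is false for general compact invariant sets, isolation must enter somewhere, and in your outline it never does.

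The paper proceeds quite differently. After collapsing $\Lambda$ to a point $p$ it fixes a ball $B_r(p)\subset N$ and, for $\rho\in(0,r)$, sets
\[
U_\rho=\{x\in B_r(p):\comp_x(\phi_\R(x)\cap B_r(p))\cap B_\rho(p)\neq\emptyset\}.
\]
Convexity of $U_\rho$ for small $\rho$ is obtained by contradiction: a failure of convexity for $\rho_n\to 0$ produces orbit arcs in $\clos(B_r(p))$ that hit $\partial B_r(p)$ yet are pinned, both before and after that hit, to pass through $B_{\rho_n}(p)$; passing to a limit yields a full orbit in $\clos(B_r(p))$ through a point of $\partial B_r(p)$, contradicting that $B_r(p)$ sits inside an isolating neighborhood. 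So in the paper the isolation hypothesis is used exactly at the convexity step, which is where your argument stalls.
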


\begin{proof}
As we explained in the previous section, we do not lose generality assuming that 
$\Lambda=\{p\}$.
Let $r>0$ be such that $\clos(B_r(p))\subset N$. 
For $\rho\in (0,r)$ define the set
 \[
  U_\rho=\{x\in B_r(p):\comp_x(\phi_\R(x)\cap B_r(p))\cap B_\rho(p)\neq\emptyset\}.
 \]
By the continuity of $\phi$ we have that $U_\rho$ is an open set for all $\rho\in(0,r)$. 
Let us prove that if $\rho$ is sufficiently small then $U_\rho$ is $\phi$-convex. 
By contradiction, suppose that there are $\rho_n\to 0$, $a_n,b_n\in U_{\rho_n}$, $t_n\geq 0$ such that $b_n=\phi_{t_n}(a_n)$ and 
$l_n=\phi_{[0,t_n]}(a_n)\subset \clos(U_{\rho_n})$ but $l_n$ is not contained in $U_{\rho_n}$. 

If $l_n\subset B_r(p)$ then $l_n$ would be contained in $U_{\rho_n}$. 
Since we know that this is not the case
there is $s_n\in(0,t_n)$ such that $c_n=\phi_{s_n}(a_n)\in\partial B_r(p)$. 
Since $a_n, b_n\in U_{\rho_n}$ we know that 
$\comp_{a_n}(\phi_\R({a_n})\cap B_r(p))\cap B_\rho(p)\neq\emptyset$ and 
$\comp_{b_n}(\phi_\R({b_n})\cap B_r(p))\cap B_\rho(p)\neq\emptyset$.
Then, there must be $u_n<0$ and $v_n>0$ such that $\phi_{u_n}(c_n),\phi_{v_n}(c_n)\in B_{\rho_n}(p)$ 
with $\phi_{[u_n,v_n]}(c_n)\subset \clos (B_r(p)$, $u_n\to-\infty$ and $v_n\to+\infty$.
If $c$ is a limit point of $c_n$ we have that $\phi_\R(c)\subset B_r(p)$ and 
$c\neq p$. 
This contradicts that $\clos(B_r(p))$ is contained in an isolating neighborhood of $p$ and proves the result.
\end{proof}

\subsection{Enveloping flow}
\label{secEnvF}
Given a partial flow $\psi$ on a metric space $U$ 
consider the metric space $\R\times U$ 
and the flow $\psi'$ on $\R\times U$ 
given by $\psi'_t(s,x)=(s+t,x)$. 
Define an equivalence relation by $(r,x)\sim (s,y)$ if 
$y=\psi_{r-s}(x)$. 
The space $$U^e=\frac{\R\times U}{\sim}$$ is the \emph{enveloping space} 
and the induced flow $\psi^e$ on $U^e$ is the \emph{enveloping flow} 
of $\psi$. 
This construction is similar to the suspension flow of a homeomorphism and in \cite{Ab} 
it is considered for arbitrary partial actions of topological groups.

In $U^e$ we consider the quotient topology. 
It can be the case that the enveloping space is not Hausdorff 
even being that $U$ is a metric space as is our case. 
Let us give an example. 

\begin{ejp}
\label{ejNoHausdorff}
Let $U=\{(x,y)\in \R^2:x^2+y^2>0\}$ and 
consider the differential equations $\dot x=1, \dot y=0$.
In this case, 
the maximal interval of $(x,y)$ with $y\neq 0$ is $I_{(x,y)}=\R$. 
If $y=0$ we have that $I_{(x,0)}=(-x,+\infty)$ for $x>0$ 
and $I_{(x,0)}=(-\infty,x)$ for $x<0$.
In the enveloping flow two half lines are added in order to continue 
the positive trajectory of $(-1,0)$ and the negative trajectory of $(1,0)$. 
Notice that $\psi^e_2(-1,0)$ and $(1,0)$ are different points in $U^e$ 
and they do not have disjoint neighborhoods. 
Consequently $U^e$ is not a Hausdorff topological space. 
\end{ejp}
Consider the set
$
 \graph(\psi)=\{(t,x,y)\in \Gamma\times U: y=\psi_t(x)\}.
$
In \cite{Ab} it is shown that 
the enveloping space $U^e$ is Hausdorff if and only if $\graph(\psi)$ is a closed 
subset of $\R\times U\times U$. 
For the following result recall the restriction flow defined in Section \ref{secRestFlow}.
\begin{thm}
\label{teoEnvMet} 
If $\phi$ is a partial flow on $X$, $U\subset X$ is a 
 $\phi$-convex open set with compact closure
 and $\psi=\phi|U$ then 
 the enveloping space of $\psi$ is metrizable.
 \end{thm}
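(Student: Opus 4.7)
My approach is to verify Hausdorffness, second countability, and local compactness of $U^e$; Urysohn's metrization theorem then finishes, since Hausdorff plus locally compact is regular. For Hausdorffness I will apply the characterization from \cite{Ab} recalled immediately before the statement: $U^e$ is Hausdorff if and only if $\graph(\psi)$ is closed in $\R\times U\times U$.

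The main step is verifying this closed-graph condition. Take $(t_n,x_n,y_n)\in\graph(\psi)$ converging to $(t,x,y)\in\R\times U\times U$, and assume $t\ge 0$ (the other case is symmetric). Each segment $\phi_{[0,t_n]}(x_n)$ lies in $U$, hence in the compactum $\clos(U)$. If the $\phi$-maximal interval of $x$ had right endpoint $\beta\le t$ finite, then for every $s\in[0,\beta)$ continuity of $\phi$ would give $\phi_s(x)=\lim_n\phi_s(x_n)\in\clos(U)$, and compactness of $\clos(U)$ would furnish an accumulation point as $s\to\beta^-$, contradicting Proposition \ref{propExtSol}. So $[0,t]$ lies in the $\phi$-maximal interval of $x$, and joint continuity of $\phi$ yields
\[
\phi_t(x)=\lim_n\phi_t(x_n)=\lim_n\phi_{t-t_n}(y_n)=y,
\]
with $\phi_{[0,t]}(x)\subset\clos(U)$. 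Since $x$ and $y=\phi_t(x)$ both lie in $U$, $\phi$-convexity pushes the whole segment back into $U$, so $\psi_t(x)$ is defined and equals $y$. This is the heart of the argument; the interplay between Proposition \ref{propExtSol}, compactness of $\clos(U)$, and $\phi$-convexity is indispensable, as Example \ref{ejNoHausdorff} already suggests that removing relative compactness can destroy Hausdorffness.

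For second countability, $\clos(U)$ is compact metric hence separable, so $\R\times U$ is second countable, and the quotient map $\pi\colon\R\times U\to U^e$ is open because the saturation of an open set is the union of its images under the partial homeomorphisms $(s,x)\mapsto(s-\tau,\psi_\tau(x))$; hence $U^e$ has a countable base. For local compactness, $U$ is open in the compact Hausdorff space $\clos(U)$ and is therefore locally compact, so $\R\times U$ is locally compact; given $[p]\in U^e$, the $\pi$-image of a compact neighborhood of any representative is a compact neighborhood of $[p]$ (its interior contains $[p]$ because $\pi$ is open). Combining, $U^e$ is regular and second countable, hence metrizable by Urysohn.
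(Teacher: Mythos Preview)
Your proof is correct and follows essentially the same route as the paper: verify the closed-graph criterion from \cite{Ab} to get Hausdorffness, then establish local compactness and a countable base, and conclude metrizability (the paper cites \cite{HY}*{Corollary 2-59}, which is the same Urysohn-type result you invoke). Your argument is in fact more careful on one point: the paper simply asserts that ``continuity of $\phi$ implies $\phi_t(x)=y$'' without checking that $(t,x)\in\Gamma$, whereas you correctly use Proposition~\ref{propExtSol} together with compactness of $\clos(U)$ to rule out a finite right endpoint of $\Gamma_x$ below $t$. The remaining differences---your explicit verification that the quotient map is open versus the paper's direct description of a countable base via $\psi^e_q(V_i)$---are cosmetic.
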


\begin{proof} 
Let us start showing that $\graph(\psi)$ is closed.
 Take sequences $t_n\to t\in\R$, $x_n\to x\in U$ and 
 $y_n\to y\in U$ such that $\psi_{t_n}(x_n)=y_n$. 
 In order to prove that $\graph(\psi)$ is closed 
 we will prove that $(t,x,y)\in\graph(\psi)$. 
 Without loss of generality assume that $t>0$.
 The continuity of $\phi$ implies that $\phi_t(x)=y$ 
 and $\phi_{[0,t]}(x)\subset\clos (U)$. 
 Since $x,y\in U$ and $U$ 
 is $\phi$-convex we have that 
 $\phi_{[0,t]}(x)\subset U$. 
 Then $y=\psi_t(x)$ and $\graph(\psi)$ is closed. 
 Applying \cite{Ab}*{Proposition 2.10} we have that 
 the enveloping space is Hausdorff.
 Since the closure of $U$ is compact we have that $U$ and $U^e$ are locally compact. 
 It is easy to see that $U^e$ has a countable base because, given a countable base $V_1,V_2,\dots$ of $U$, 
 the sets $\psi^e_q(V_i)$, with $q$ rational, form a countable base of $U^e$.
 Notice that $U$ has a countable base because it is metric and has compact closure.
 Finally, applying \cite{HY}*{Corollary 2-59} we conclude that $U^e$ is metrizable.
 \end{proof}

 \subsection{Isolating universe} 
Given an isolated set $\Lambda$ of a partial flow $\phi$ consider an
isolating neighborhood $N$. 

\begin{df}A flow $\psi$ on a compact metric space $Y$ is a an \emph{isolating universe}
of $\Lambda$ if: 
\begin{enumerate}
\item there is  
a homeomorphism $h\colon N\to M\subset Y$ conjugating $\phi|N$ with $\psi|M$, 
\item there are two singular points $\alpha,\omega\in Y$ 
such that $\alpha\neq\omega$, $\alpha,\omega\notin M$, 
\item for all $x\in Y$, $x\neq\alpha$, the positive orbit of $x$ 
converges to $h(\Lambda)$ or converges to $\omega$,
\item for all $x\in Y$, $x\neq\omega$, the negative orbit of $x$ 
converges to $h(\Lambda)$ or converges to $\alpha$.
\end{enumerate}
\end{df}
In this case we will identify $M$ with $N$ and $\Lambda$ with $h(\Lambda)$ and 
consider $N$ as a subset of $Y$. 
Define the sets 
\[
\begin{array}{cl}
  W^s(\Lambda)=\{x\in Y:\lim_{t\to+\infty}\dist(\psi_t(x),\Lambda)= 0\}, & \Lambda_\alpha=\{\alpha\}\cup\Lambda\cup W^s(\Lambda),\\
  W^u(\Lambda)=\{x\in Y:\lim_{t\to-\infty}\dist(\psi_t(x),\Lambda)= 0\}, & \Lambda_\omega=\{\omega\}\cup\Lambda\cup W^u(\Lambda).
\end{array}
\]

\begin{df}
  An isolated set $\Lambda$ is an \emph{attractor} if there is an isolating neighborhood 
$N$ such that if $x\in N$ and $x\notin \Lambda$ then $\phi_t(x)\notin N$ for some $t<0$.
We say that $\Lambda$ is a \emph{repeller} if when reversing time 
it is an attractor.
\end{df}

The following result allows us to see $\Lambda$ as the intersection of the 
attractor $\Lambda_\omega$ with the repeller $\Lambda_\alpha$.

\begin{thm}
\label{teoBigBang}
  Every isolated set admits an isolating universe $Y$ such that $\Lambda_\alpha$ is a repeller, $\Lambda_\omega$ is an attractor and 
  $\Lambda=\Lambda_\alpha\cap\Lambda_\omega$.
\end{thm}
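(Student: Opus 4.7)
I would follow the surgery sketched in Figure~\ref{figSurgery}. Starting from any isolating neighborhood $N$ of $\Lambda$, apply Proposition~\ref{propConvex} to get a $\phi$-convex open $U$ with $\Lambda\subset U\subset N$ and $\overline U$ compact. Set $\psi=\phi|U$. By Theorem~\ref{teoEnvMet} the enveloping space $U^e$ is Hausdorff, metrizable, locally compact and second countable, and carries the enveloping flow $\psi^e$; identifying $U$ with its image, $\Lambda$ remains an isolated set of $\psi^e$ with $U$ as isolating neighborhood.

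The dynamical core of the argument is the following dichotomy: for every $y\in U^e$, the forward semi-orbit of $y$ either converges to $\Lambda$ or eventually leaves every compact subset of $U^e$, and symmetrically for the backward semi-orbit. Indeed, every $\psi^e$-orbit meets $U$ in a connected time interval, and the \emph{abstract tail} of the orbit outside that interval cannot accumulate in $U^e$: comparing representatives, a hypothetical convergence $[t_n,x]\to[s_0,x_0]$ with $t_n\to+\infty$ would force the bounded maximal interval of $x$ under $\psi$ to contain values tending to $+\infty$. Hence any orbit confined to a compact subset of $U^e$ has no abstract tails on either side, projects to a $\phi$-orbit entirely contained in $U$, and lies in $\Lambda$ by isolation. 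Applying this to the forward limit set of a forward-bounded orbit yields convergence to $\Lambda$.

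To build $Y$, I set $Y:=U^e\sqcup\{\alpha,\omega\}$, extend the flow by fixing $\alpha$ and $\omega$, and topologize $Y$ so that open subsets of $U^e$ remain open while basic neighborhoods of $\omega$ (resp.\ $\alpha$) take the form $\{\omega\}\cup A$ (resp.\ $\{\alpha\}\cup A$) for $A\subset U^e$ open, $\psi^e$-forward-invariant (resp.\ backward-invariant), and with $U^e\setminus A$ compact. The dichotomy supplies enough such $A$ — one starts from large compact pieces of $\overline U$, takes appropriate forward/backward saturations, and closes up, using the dichotomy to see that the relevant saturations remain compact — and the same dichotomy makes forward-escape and backward-escape tails eventually separate, so that $\alpha$ and $\omega$ admit disjoint neighborhoods. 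Then $Y$ is compact Hausdorff, metrizable by second countability of $U^e$, and $\psi^e$ extends continuously at $\alpha,\omega$ because the forward/backward invariance of $A$ is exactly what is needed for the extension. This topological construction is the main technical obstacle of the proof.

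With $Y$ in hand, the conclusion is routine. Conditions (1)--(4) of the definition of isolating universe just rephrase the dichotomy inside $Y$. Choosing an open neighborhood $V$ of $\alpha$ disjoint from $\Lambda_\omega$, the compact set $N':=Y\setminus V$ is a neighborhood of $\Lambda_\omega=\bigcap_{t\geq0}\psi^e_t(N')$ in which every point outside $\Lambda_\omega$ escapes backward to $\alpha$, so $\Lambda_\omega$ is an attractor; symmetrically $\Lambda_\alpha$ is a repeller. Finally, since $\alpha\neq\omega$, any $y\in\Lambda_\alpha\cap\Lambda_\omega$ lies in $(\Lambda\cup W^s(\Lambda))\cap(\Lambda\cup W^u(\Lambda))$, so both semi-orbits of $y$ converge to $\Lambda$, the full orbit is bounded in $U^e$, and by the dichotomy $y\in\Lambda$.
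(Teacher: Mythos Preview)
Your overall architecture matches the paper's: pass to a $\phi$-convex $U$, form the enveloping flow $\psi^e$ on $U^e$, compactify with two fixed points $\alpha,\omega$, and read off the attractor/repeller structure. The dichotomy you isolate is exactly the dynamical content, and your verification that $\Lambda_\omega$ is an attractor, $\Lambda_\alpha$ a repeller, and $\Lambda=\Lambda_\alpha\cap\Lambda_\omega$ is fine.

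There is, however, a genuine gap in the compactification step. You set $Y=U^e\sqcup\{\alpha,\omega\}$ and declare that basic neighborhoods of $\omega$ (resp.\ $\alpha$) are $\{\omega\}\cup A$ with $A\subset U^e$ open, forward- (resp.\ backward-) invariant, and $U^e\setminus A$ \emph{compact}. But then $\alpha$ and $\omega$ can never be separated: if $\{\alpha\}\cup A_\alpha$ and $\{\omega\}\cup A_\omega$ were disjoint basic neighborhoods, then $A_\alpha\cap A_\omega=\emptyset$, so $U^e=(U^e\setminus A_\alpha)\cup(U^e\setminus A_\omega)$ would be a union of two compact sets, hence compact. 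Since $U^e$ is never compact once $\Lambda\subsetneq U$ (any orbit through a point of $U\setminus\Lambda$ has an abstract tail homeomorphic to a ray), your $Y$ is not Hausdorff. Concretely, in the saddle $\dot x=x,\ \dot y=-y$ with $U=(-1,1)^2$, the sequence $x_k=(1-1/k,\,1-1/k)$ has no accumulation point in $U^e$ and, under your topology, would have to converge to both $\alpha$ and $\omega$.

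The paper avoids this by \emph{not} compactifying all of $U^e$. It first defines on $Z=U^e\cup\{\alpha,\omega\}$ explicit basic neighborhoods $V_n(\omega)=\{x:I(x)<-n\}\cup\{\omega\}$ and $V_n(\alpha)=\{x:I(x)>n\}\cup\{\alpha\}$, where $I(x)=\{t:\psi^e_t(x)\in U\}$; these do \emph{not} have compact complement in $U^e$. Then it takes a compact $N\subset U$ and sets $Y=\overline{\psi^e_{\R}(N)}\subset Z$. Compactness of $Y$ is now immediate because $Y\setminus\bigl(V_n(\alpha)\cup V_n(\omega)\bigr)\subset\psi^e_{[-n,n]}(N)$, the continuous image of the compact set $[-n,n]\times N$. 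Your argument can be repaired by making exactly this restriction; after that, the rest of your outline goes through as written.
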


\begin{proof}
Let $U$ be a $\phi$-convex neighborhood of $\Lambda$ given by Proposition \ref{propConvex}. 
Consider $\psi^e$ the enveloping flow on the enveloping space $U^e$ defined in Section \ref{secEnvF}. 
We known by Theorem \ref{teoEnvMet} that $U^e$ is a metrizable space. 
Define the set $Z=\{\alpha,\omega\}\cup U^e$ where $\alpha,\omega\notin U^e$ are different points.
Given $x\in U^e$ define 
$$I(x)=\{t\in\R:\psi^e_t\in U\}.$$ 
A basis of neighborhoods of $\omega$ is 
$$V_n(\omega)=\{x\in U^e: I(x)<-n\}\cup\{\omega\}$$
and 
a basis of neighborhoods of $\alpha$ is 
$$V_n(\alpha)=\{x\in U^e: I(x)>n\}\cup\{\alpha\}$$
for $n\geq 1$. This defines the topology of $Z$.
Let $N$ be a compact neighborhood of $\Lambda$ contained in $U$. 
Define $Y$ as the closure in $Z$ of $\psi_\R(N)$. 
In order to prove that $Y$ is a compact space let $\{U_a\}_{a\in A}$ be an arbitrary open covering 
of $Y$. A finite subcovering can be obtained as follows. 
Take $U_\alpha,U_\omega$ containing $\alpha$ and $\omega$. 
There is $t>0$ such that, $\psi_{[-t,t]}(N)$ contains $Y'=Y\setminus (U_\alpha\cup U_\omega)$. 
Since $N$ is compact we can take a finite covering of $Y'$. This proves that $Y$ is compact. 
It is easy to see that $Y$ is Hausdorff because the enveloping $U^e$ is metrizable. 
To show that $Y$ is metrizable it only rests to note that $Y$ has a countable basis and apply 
\cite{HY}*{Corollary 2-59}.
Therefore, $Y$ is a compact metric space. 

The flow $\psi$ can be extended to $Y$ by putting singular points at $\alpha$ and $\omega$, obtaining a continuous flow. 
Given a point $x\in N$ there are two possible cases for its positive orbit:
1) $\psi_{\R^+}(x)\subset N$, which implies that $\psi_t(x)\to\Lambda$ as $t\to+\infty$ and
2) $\psi_{\R^+}(x)\nsubseteq N$, in this case $\psi_t(x)\to\omega$.
Analogous for a negative orbit.
This proves that $Y$ is an isolating universe for $\Lambda$.
In Figure \ref{figBigBang} the construction is illustrated.

\begin{figure}[htb] 
\centering 
\def\svgwidth{320pt} 
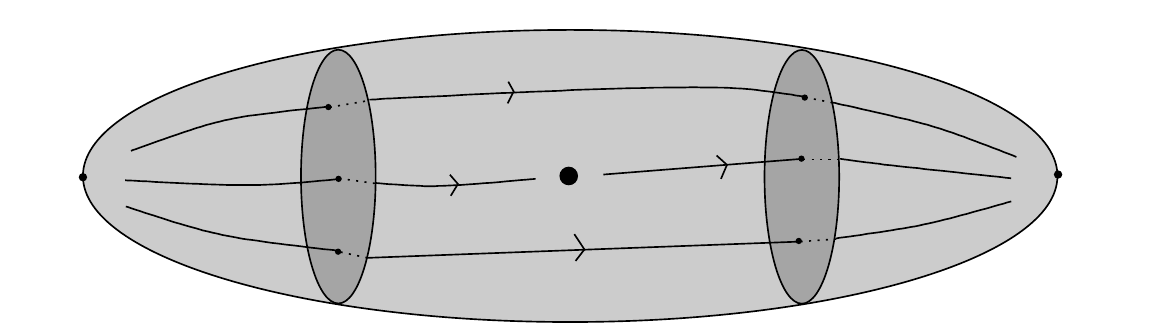 
\caption{Isolating universe for an isolated set $\Lambda$.}
\label{figBigBang}
\end{figure} 

We have that $\Lambda_\alpha$ is a repeller because
$N_\alpha=Y\setminus V_n(\omega)$ is an isolating neighborhood of $\Lambda_\alpha$ for all $n\geq 1$
and for all $x\in N_\alpha\setminus\Lambda_\alpha$ there is $t>0$ such that $\psi_t(x)\notin N_\alpha$.
Similarly we can see that $\Lambda_\omega$ is an attractor by considering $N_\omega=Y\setminus V_n(\alpha)$. 
Finally, $\Lambda=\Lambda_\alpha\cap\Lambda_\omega$ because $W^s(\Lambda)\cap W^u(\Lambda)=\Lambda$.
\end{proof}

This result implies that the construction of a Lyapunov function or a metric around an isolated set 
is reduced to the case of attractors and repellers.
 
\section{Attractors and repellers} 
\label{secAttRep}

In this section 
we construct linear models for attractors 
and we show that every attractor 
admits a positive Lyapunov function $\liap$ satisfying 
$\dot \liap=-\liap$, since $\liap$ is positive we have that it is decreasing. Similar results are concluded for repellers.

\subsection{Size functions} 
Given a compact set $N\subset X$
denote by $\K(N)$ the set of non-empty compact subsets of $N$.
In the set $\K(N)$ we consider the Hausdorff distance 
$\dist_H$ making $(\K(N),\dist_H)$ a compact metric space, see for example \cite{IN} for a proof.
Recall that 
\[
 \dist_H(A,B)=\inf\{\epsilon>0:A\subset B_\epsilon(B)\hbox{ and } B\subset B_\epsilon(A)\},
\]
where $B_\epsilon(C)=\cup_{x\in C} B_\epsilon(x)$ and $B_\epsilon(x)$ 
is the ball of radius $\epsilon$ and 
centered at $x$.
A \emph{size function} or a \emph{Whitney's function} is a continuous function $\mu\colon\K(X)\to\R$ satisfying:
\begin{enumerate}
 \item $\mu(A)\geq 0$ with equality if and only if $A$ is a singleton,
 \item if $A\subset B$ and $A\neq B$ then $\mu(A)<\mu(B)$. 
\end{enumerate}
A set $A$ is a \emph{singleton} if it only contains one point.
Let us recall how can be defined a size function. 
A variation of the construction given in \cite{Whitney33}, adapted for compact metric spaces, is the following. 
Let $q_1,q_2,q_3,\dots$ be a sequence dense in $N$. 
Define $\mu_i\colon \K(N)\to\R$ as 
\[
 \mu_i(A)=\max_{x\in A}\dist(q_i,x)-\min_{x\in A}\dist(q_i,x).
\]
The following formula defines a size function $\mu\colon \K(N)\to\R$
\[
 \mu(A)=\sum_{i=1}^\infty \frac{\mu_i(A)}{2^i},
\]
as proved in \cite{Whitney33}. 

\subsection{A decreasing Lyapunov function} 
A \emph{decreasing Lyapunov function} for an isolated set $\Lambda$ 
is a continuous function $\liap\colon U\to\R$ 
defined in a neighborhood of $\Lambda$ such that $\liap(\Lambda)=0$, 
and $\dot \liap$ is negative in $U\setminus\Lambda$.
We say that $\liap$ is \emph{positive} if $\liap(x)>0$ for all $x\in U\setminus\Lambda$.
As usual we define 
\[
  \dot \liap(x)=\lim_{t\to 0} \frac{\liap(\phi_t(x))-\liap(x)}t.
\]

\begin{thm}
\label{estAs}
Every attractor admits a positive and decreasing Lyapunov function.
\end{thm}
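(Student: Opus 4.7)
The plan is to build $\liap$ in two stages: first produce a continuous function $V$ that is strictly decreasing along orbits by measuring, with a Whitney size function, the ``size'' of positive orbit closures, and then smooth $V$ by an exponential average along the orbit so that the resulting function is differentiable with strictly negative derivative.

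First, I would put myself in a cleaner setting. By the quotient of Section~\ref{secIsoPOint} the isolated set can be collapsed to a single point, so assume $\Lambda=\{p\}$. Using Theorem~\ref{teoBigBang} one can further assume that $\phi$ is a flow on a compact metric space $Y$, and then the attractor hypothesis provides a compact positively invariant isolating neighborhood $N$ of $p$ (for example $N=Y\setminus V_n(\alpha)$ for $n$ large), on which the convergence $\phi_t(x)\to p$ as $t\to+\infty$ is uniform in $x\in N$.

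Next, fix a Whitney size function $\mu\colon\K(N)\to\R$ and define
\[
V(x)=\mu\bigl(\clos(\phi_{[0,\infty)}(x))\bigr).
\]
Then $V(p)=0$ since the orbit of $p$ is a singleton, while $V(x)>0$ for $x\ne p$ since the closure contains both $x$ and $p$. Continuity of $V$ reduces, by continuity of $\mu$, to Hausdorff continuity of $x\mapsto\clos(\phi_{[0,\infty)}(x))$; this is verified by splitting time as $[0,T]\cup[T,\infty)$, using uniform continuity of $\phi$ on the compact prefix and uniform attraction to place every tail inside an $\epsilon$-ball of $p$. Moreover, for $x\ne p$ and $s>0$ one has $x\notin\clos(\phi_{[s,\infty)}(x))$ (a recurrent point of the basin of an attractor must lie on the attractor, hence equal $p$), so the inclusion $\clos(\phi_{[s,\infty)}(x))\subsetneq \clos(\phi_{[0,\infty)}(x))$ is strict and Whitney's second property forces $V(\phi_s(x))<V(x)$.

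Finally, set
\[
\liap(x)=\int_0^\infty V(\phi_t(x))\,e^{-t}\,dt.
\]
Boundedness and continuity of $V$ give continuity of $\liap$, and clearly $\liap(p)=0$ while $\liap(x)>0$ elsewhere. The change of variables $u=s+t$ yields $\liap(\phi_s(x))=e^s\int_s^\infty V(\phi_u(x))e^{-u}\,du$, so differentiating at $s=0$ gives $\dot\liap(x)=\liap(x)-V(x)$. Strict monotonicity of $V$ along the orbit of $x\ne p$ implies $V(\phi_t(x))<V(x)$ for every $t>0$, hence $\liap(x)<V(x)\int_0^\infty e^{-t}\,dt=V(x)$, and therefore $\dot\liap(x)<0$ on $N\setminus\{p\}$. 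The main obstacle is the Hausdorff continuity of the orbit-closure map: it is what forces the reduction to a positively invariant neighborhood on which attraction is uniform, and the isolating universe of Theorem~\ref{teoBigBang} is brought in precisely to guarantee such a neighborhood; once this is established the rest of the argument is bookkeeping with the two defining properties of a Whitney size function and the elementary computation above.
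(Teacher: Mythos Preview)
Your proof follows the same line as the paper's: collapse $\Lambda$ to a point $p$, apply a Whitney size function to the forward orbit closure $\clos(\phi_{[0,\infty)}(x))=\phi_{[0,\infty)}(x)\cup\{p\}$ to get a continuous, strictly decreasing $V$, and then smooth along orbits so that the time derivative exists and is negative. The only real variation is the smoothing device: the paper uses a finite-window average $\liap_1(x)=\int_0^\tau V(\phi_t(x))\,dt$, giving $\dot\liap_1(x)=V(\phi_\tau(x))-V(x)<0$, whereas you take the exponential average $\liap(x)=\int_0^\infty V(\phi_t(x))e^{-t}\,dt$, giving $\dot\liap=\liap-V<0$. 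Both are valid, and your continuity and strict-decrease arguments match the paper's.

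One correction to the setup: the appeal to Theorem~\ref{teoBigBang} is an unnecessary detour, and the specific claim is off. In the isolating universe $Y$, the set $Y\setminus V_n(\alpha)$ is an isolating neighborhood of the attractor $\Lambda_\omega=\{\omega\}\cup\Lambda\cup W^u(\Lambda)$, not of $p$ alone, so points there need not all converge to $p$ (some may go to $\omega$). What you actually need---a compact, positively invariant neighborhood of $p$ with uniform attraction---is precisely the statement that an attractor in this paper's sense is asymptotically stable. The paper simply asserts this directly (``there are $\delta_0,\delta>0$ such that if $\dist(x,p)<\delta$ then $\phi_t(x)\in B_{\delta_0}(p)$ for all $t\geq0$ and $\phi_t(x)\to p$'') and works inside $B_\delta(p)$. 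The isolating universe does not let you bypass that step, so you may as well invoke asymptotic stability outright.
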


\begin{proof}
By the remarks in Section \ref{secIsoPOint} we do not lose generality if we assume that 
the attractor $\Lambda$ is a singleton $\Lambda=\{p\}$.
Then there are $\delta_0,\delta>0$ such that if $\dist(x,p)<\delta$ then 
 $\phi_t(x)\in B_{\delta_0}(p)$ for all $t\geq 0$ and $\phi_t(x)\to p$ as $t\to\infty$. 
 Define $U=B_\delta(p)$ and $\liap\colon U\to \R$ as
 \[
 \liap(x)=\mu(\{\phi_t(x):t\geq 0\}\cup\{p\})
 \]
 where $\mu$ is a size function.
 Since $\phi_t(x)\to p$ we have that 
 \begin{equation}\label{ecuO}
  O(x)=\{\phi_t(x):t\geq 0\}\cup\{p\}
 \end{equation}
 is a compact set for all $x\in U$. 
 Notice that if $t>0$ and $x\neq p$ then $O(\phi_t(x))\subset O(x)$ and the inclusion is proper.
 Therefore, $\liap(\phi_t(x))<\liap(x)$ because $\mu$ is a size function. 
 Also notice that $\liap(p)=0$ and $\liap(x)>0$ if $x\neq p$. 
 In order to prove the continuity of $\liap$, 
 we will prove the continuity of $O\colon U\to \K(X)$, the function defined by (\ref{ecuO}). 
 Since $\mu$ is continuous we will conclude the continuity of $\liap$. 
 
 Let us prove the continuity of $O$ at $x\in U$. 
 Take $\epsilon>0$. 
 By the asymptotic stability of $p$ there are $\rho,T>0$ such that 
 if $y\in B_\rho(x)$ then $\phi_t(y)\in B_{\epsilon/2}(p)$ for all $t\geq T$. 
 By the continuity of the flow, there is $r>0$ such that if 
 $y\in B_r(x)$ then $\dist(\phi_t(x),\phi_t(y))<\epsilon$ for all 
 $t\in[0,T]$. 
 Now it is easy to see that 
 if $y\in B_{\min\{\rho,r\}}(x)$ then 
 $\dist_H(O(x),O(y))<\epsilon$, proving the continuity of $O$ at $x$ and consequently the continuity of $\liap$.

 It can be the case that $\dot\liap$ does not exist and a well known procedure must be applied.
Define $\liap_1(x)=\int_0^\tau \liap(\phi_t(x))dt$ for a fixed 
$\tau>0$ small. 
In this way $\dot \liap_1$ exists and $\dot \liap_1(x)=\liap(\phi_\tau(x))-\liap(x)<0$ 
for all $x\in U\setminus \Lambda$. 
This proves the result.
\end{proof}

\subsection{Linear models}
\label{secLInModel}
Consider the normed vector space 
$\vs$ of real sequences $x\colon\N\to\R$, denoted by $x_n=x(n)$, such that 
\begin{equation}
  \label{ecuNorm}
  \|x\|^2=\sum_{i=1}^\infty x_i^2
\end{equation}
is convergent. 
Let $\psi\colon \R\times \vs\to \vs$ be the flow 
given by 
\begin{equation}
  \label{ecuLinFlow}
  \psi_t(x)=e^{-t}x.
\end{equation}
If $A\subset \vs$ is a compact set define $C_A$ as the \emph{cone} generated by $A$, that is, 
\[
  C_A=\{rx:r\in[0,1], x\in A\}.
\]
We say that $\psi$ restricted to $C_A$ has a \emph{linear attractor} at the origin $0_\vs$.

\begin{rmk}
\label{rmkEmbedding}
Let $\vs_1=\{x\in\vs:x_1=1\}$. 
By \cite{HY}*{Theorem 2-46} we know that every compact metric space is homeomorphic 
with a compact subset of $\vs_1$. 
\end{rmk}

\begin{df}
A \emph{cross section} for a flow $\phi$ on a metric space $X$ is a set $\Sigma\subset X$ such that $\phi_{(-t,t)}(\Sigma)$ is an open set 
for some $t>0$ and 
$\phi\colon (-t,t)\times\Sigma\to X$ is a homeomorphism onto its image.
\end{df}

\begin{thm}
\label{teoSemiConjLinAtt}  For every attractor $\Lambda$ of the flow $\phi$ 
  on the metric space $X$
  there are a neighborhood $N$ of $\Lambda$ 
  such that the restriction $\phi|N$
  is semi-conjugate with a linear attractor. 
  If $\Lambda=\{p\}$ we obtain a conjugacy.
\end{thm}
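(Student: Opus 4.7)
The strategy is to reduce the general case to a singleton attractor using the quotient of Section \ref{secIsoPOint}, and then to build an explicit conjugacy by ``coning off'' a cross-section produced by a Lyapunov function.

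For the reduction, take an isolating neighborhood $N_0$ of $\Lambda$ and consider the projection $\pi\colon N_0\to N_0/\Lambda$ from Section \ref{secIsoPOint}. It is a semi-conjugacy onto a partial flow for which $p=\pi(\Lambda)$ is an isolated point, and the attractor property passes through $\pi$. Hence, if a conjugacy $h$ is produced in the singleton case, the composition $h\circ\pi$ yields the required semi-conjugacy. From now on assume $\Lambda=\{p\}$.

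Apply Theorem \ref{estAs} to obtain a positive decreasing Lyapunov function $\liap\colon U\to[0,\infty)$ on a neighborhood $U$ of $p$, and choose $c>0$ small enough that $V=\{x\in U:\liap(x)\le c\}$ is a compact neighborhood of $p$. Put $\Sigma=\{x\in U:\liap(x)=c\}$. Strict monotonicity of $\liap$ along orbits makes $V$ forward invariant and forces every orbit in $V\setminus\{p\}$ to meet $\Sigma$ at a unique instant; let $\tau(x)\ge 0$ be the (unique) number with $\phi_{-\tau(x)}(x)\in\Sigma$. The map $\tau$ is continuous on $V\setminus\{p\}$, and $\tau(x)\to\infty$ as $x\to p$: if some sequence $x_n\to p$ had $\tau(x_n)\to\tau^*<\infty$, then $c=\liap(\phi_{-\tau(x_n)}(x_n))\to\liap(p)=0$, a contradiction. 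By Remark \ref{rmkEmbedding} fix a topological embedding $\iota\colon\Sigma\hookrightarrow\vs_1$, set $A=\iota(\Sigma)$, and define
\[
h(p)=0_{\vs},\qquad h(x)=e^{-\tau(x)}\,\iota\bigl(\phi_{-\tau(x)}(x)\bigr)\quad(x\in V\setminus\{p\}).
\]
Since $\iota(\Sigma)\subset\vs_1$, the radial factor $e^{-\tau(x)}$ coincides with the first coordinate of $h(x)$, so $h$ is a bijection from $V$ onto $C_A$. It is clearly continuous on $V\setminus\{p\}$, while $\|h(x)\|\le e^{-\tau(x)}\sup\{\|z\|:z\in A\}\to 0$ provides continuity at $p$; being a continuous bijection from the compact space $V$ onto the Hausdorff space $C_A$, it is a homeomorphism. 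The cocycle identity $\tau(\phi_s(x))=\tau(x)+s$, which holds whenever $\phi_s(x)\in V\setminus\{p\}$, produces the flow identity
\[
h(\phi_s(x))=e^{-(\tau(x)+s)}\iota\bigl(\phi_{-\tau(x)}(x)\bigr)=e^{-s}h(x)=\psi_s(h(x)),
\]
and the same relation shows that $\phi_s(x)\in V$ iff $\psi_s(h(x))\in C_A$, so the two partial flows have matching maximal domains.

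The main obstacle is arranging $c$ so that $V$ is genuinely compact inside $U$ and so that the domain-matching condition (1) of semi-conjugacy holds exactly; both rest on the strict monotonicity of $\liap$ along orbits together with $p$ being isolated, which is where the attractor hypothesis is really used. The remaining verifications are direct manipulations of the exponential radial parametrization.
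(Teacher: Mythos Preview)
Your proof is correct and follows essentially the same construction as the paper: take a level set $\Sigma$ of a decreasing Lyapunov function, embed it in $\vs_1$, and send $\phi_t(x)\mapsto e^{-t}\iota(x)$ to obtain the cone $C_A$. The only cosmetic difference is that you first collapse $\Lambda$ to a point via the quotient of Section~\ref{secIsoPOint} and then build the conjugacy, whereas the paper builds the map directly by declaring $h(\Lambda)=0_\vs$ and then observes injectivity when $\Lambda$ is a singleton; these are the same argument read in opposite orders, and your version has the merit of spelling out the continuity at $p$ and the matching of maximal intervals more carefully than the paper does.
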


\begin{proof}
  Consider a decreasing Lyapunov function $\liap\colon U\to \R$ from Theorem \ref{estAs}, 
  where $U$ is an isolating neighborhood of $\Lambda$ with compact closure. 
  Denote by $m=\min \{\liap(x):x\in\partial U\}>0$. 
  Let $\Sigma=\liap^{-1}(m/2)$. It is a compact set and also a cross section because $\dot \liap<0$.
  Take from Remark \ref{rmkEmbedding} a homeomorphism 
  $i\colon \Sigma \to A\subset \vs_1$. 
  Define $$N=\Lambda\cup\{\phi_t(x):t\geq 0,x\in\Sigma\}$$ and 
  $h\colon N\to C_A$ by $h(\phi_t(x))=e^{-t}i(x)$ for $x\in\Sigma$ and $h(x)=0_\vs$ for $x\in\Lambda$. 
  By the definitions it is easy to see that $h$ is a semi-conjugacy.
  See Figure \ref{figLinMod}.
  It only rests to note that if $\Lambda=\{p\}$ then $h$ is injective and, consequently, a homeomorphism and a conjugacy.
\end{proof}

\begin{figure}[ht]
\center 
\includegraphics{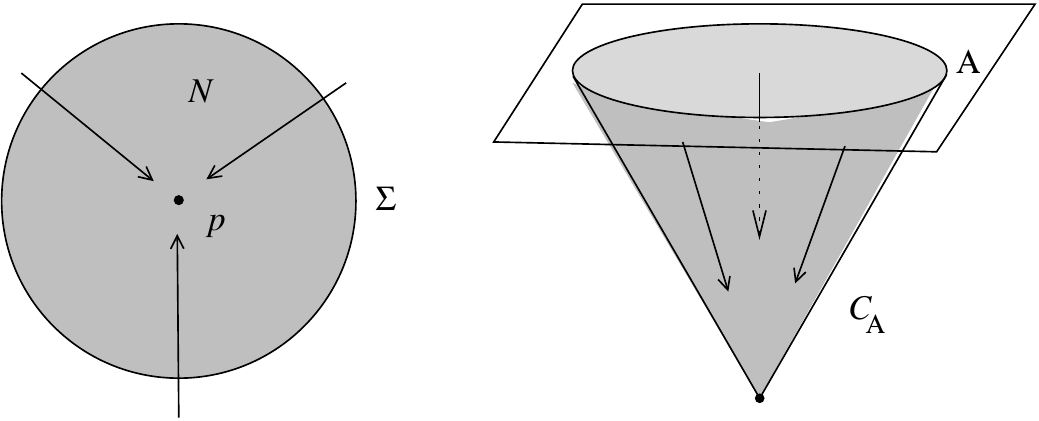}  
\caption{Linear model for an attractor.}
\label{figLinMod}
\end{figure}

\subsection{Special Lyapunov functions and metrics}
Recall that a \emph{pseudo-metric} on a set $N$ is a non-negative function $\di\colon N\times N\to\R$ such that 
\begin{enumerate} 
 \item $\di(x,x)=0$ for all $x\in N$, 
 \item $\di(x,y)=\di(y,x)$ for all $x,y\in N$ and 
 \item $\di(x,y)+\di(y,z)\geq\di(x,z)$ for all $x,y,z\in N$. 
\end{enumerate}
It is not required that $\di(x,y)=0$ implies $x=y$. 

\begin{prop}
\label{propCatAtt}
For every attractor $\Lambda$ there is a continuous pseudo-metric $\di\colon N\times N\to \R$, 
with $N$ a neighborhood of $\Lambda$, such that $\dot\di=-\di$ and 
$\di(x,y)=0$ if and only if $x=y$ or $x,y\in\Lambda$.
If in addition $\Lambda$ is a singleton then $\di$ is a metric.
\end{prop}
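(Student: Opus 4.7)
The plan is to pull back the natural norm on the model space $\vs$ through the semi-conjugacy given by Theorem \ref{teoSemiConjLinAtt}. All the desired properties will follow from the fact that the linear flow $\psi_t(x)=e^{-t}x$ acts on $\vs$ as scalar multiplication, so differences of trajectories contract uniformly by a factor $e^{-t}$.

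First I would apply Theorem \ref{teoSemiConjLinAtt} to obtain an isolating neighborhood $N$ of $\Lambda$ and a semi-conjugacy $h\colon N\to C_A\subset\vs$ between $\phi|N$ and the linear attractor $\psi_t(x)=e^{-t}x$, recalling from that proof that $h(\phi_t(x))=e^{-t}i(x)$ for $x\in\Sigma$, $t\geq 0$, and $h(\Lambda)=\{0_\vs\}$. Then I would define
\[
  \di(x,y)=\|h(x)-h(y)\|,
\]
where $\|\cdot\|$ is the norm on $\vs$ from \eqref{ecuNorm}. Continuity of $\di$ is immediate from the continuity of $h$ and of $\|\cdot\|$, and the pseudo-metric axioms follow from those of the induced metric on $\vs$ (symmetry, triangle inequality, vanishing on the diagonal).

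Next I would verify the catenary/contraction identity. Since $h$ intertwines $\phi$ and $\psi$,
\[
  h(\phi_t(x))-h(\phi_t(y))=\psi_t(h(x))-\psi_t(h(y))=e^{-t}\bigl(h(x)-h(y)\bigr),
\]
so $\di(\phi_t(x),\phi_t(y))=e^{-t}\di(x,y)$ and differentiating at $t=0$ yields $\dot\di(x,y)=-\di(x,y)$.

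Finally, I would identify when $\di(x,y)=0$. This happens exactly when $h(x)=h(y)$. Using the explicit formula $h(\phi_t(x))=e^{-t}i(x)$ for $x\in\Sigma$ and the fact that $i(\Sigma)\subset\vs_1$ (so the first coordinate of $h(\phi_t(x))$ is $e^{-t}$, recovering $t$, and hence recovering $x\in\Sigma$ via injectivity of $i$), the map $h$ is injective on $N\setminus\Lambda$ and collapses $\Lambda$ to $0_\vs$. Therefore $\di(x,y)=0$ iff $x=y$ or $x,y\in\Lambda$. If $\Lambda=\{p\}$ is a singleton the theorem gives a conjugacy, so $h$ is injective on all of $N$ and $\di$ separates points, making it a metric. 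The only mildly delicate step is the last one, namely reading the injectivity of $h$ off the flow-box coordinates $(t,x)\mapsto\phi_t(x)$ on $\Sigma$; everything else is a direct transport of structure.
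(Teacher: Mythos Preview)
Your proof is correct and follows essentially the same route as the paper: pull back the norm on $\vs$ via the semi-conjugacy $h$ from Theorem~\ref{teoSemiConjLinAtt}, define $\di(x,y)=\|h(x)-h(y)\|$, and read off $\dot\di=-\di$ from the linear contraction $\psi_t=e^{-t}$. Your treatment is in fact slightly more detailed than the paper's, which simply asserts the vanishing characterization of $\di$; your argument recovering $(t,x)$ from the first coordinate of $e^{-t}i(x)\in\vs_1$ makes explicit the injectivity of $h$ on $N\setminus\Lambda$ that the paper leaves to the reader.
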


\begin{proof}
Consider from Theorem \ref{teoSemiConjLinAtt} a semi-conjugacy $h\colon N\to C_A$ 
between $\phi|N$ and the flow $\psi$ of equation (\ref{ecuLinFlow}) restricted to a cone $C_A$. 
Define $\di(x,y)=\|h(x)-h(y)\|$ where $\|\cdot\|$ is given in equation (\ref{ecuNorm}).
It is easy to prove that $\di$ is a pseudo-metric and $\di(x,y)=0$ if and only if $x=y$ or $x,y\in\Lambda$. 
Obviously, if $\Lambda$ is a singleton then $\di$ is a metric. 
Finally notice that 
$$\di(\phi_t(x),\phi_t(y))
=\|\psi_t(h(x))-\psi_t(h(y))\|
=e^{-t}\|h(x)-h(y)\|
=e^{-t}\di(x,y).$$
Therefore, $\dot\di=-\di$.
\end{proof}

Let $p$ be an asymptotically stable singular point, i.e., $\{p\}$ is an attractor. 
It is natural, if one looks for a decreasing Lyapunov function $\liap$ around $p$, 
to consider $\liap(x)=\dist(x,p)$. 
But one easily find examples, even hyperbolic linear systems in $\R^n$, 
for which $p$ can be asymptotically stable but $\dist(\phi_t(x),p)$ is not a decreasing function of $t$. 
From the previous result we obtain the following corollary that says that this idea works if the distance is changed.

\begin{coro}
If $p$ is asymptotically stable then there is a topologically equivalent metric $\di$ in a neighborhood of $p$ 
such that $\liap(x)=\di(x,p)$ is a decreasing Lyapunov function.
\end{coro}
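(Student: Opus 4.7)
The plan is to read the corollary as an immediate consequence of Proposition \ref{propCatAtt} specialised to the singleton case. Asymptotic stability of $p$ means precisely that $\{p\}$ is an attractor in the sense defined in this section, so all of the machinery of Section \ref{secAttRep} applies with $\Lambda=\{p\}$.

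First, I would invoke Proposition \ref{propCatAtt} to produce a neighborhood $N$ of $p$ and a continuous pseudo-metric $\di\colon N\times N\to\R$ with $\dot\di=-\di$; because $\Lambda=\{p\}$ is a singleton, the proposition further guarantees that $\di$ is in fact a metric on $N$. Then I would set $\liap(x):=\di(x,p)$. Since $p$ is fixed, $\phi_t(p)=p$, and
\[
\dot\liap(x)=\lim_{t\to 0}\frac{\di(\phi_t(x),p)-\di(x,p)}{t}=\lim_{t\to 0}\frac{\di(\phi_t(x),\phi_t(p))-\di(x,p)}{t}=-\di(x,p)=-\liap(x),
\]
using $\dot\di=-\di$ along the flow. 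In particular $\liap(p)=0$ and $\dot\liap(x)=-\liap(x)<0$ on $N\setminus\{p\}$, so $\liap$ is a positive, decreasing Lyapunov function for $p$ of the required form.

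The only remaining point is topological equivalence. Here I would go back into the construction of $\di$ given in the proof of Proposition \ref{propCatAtt}: one has $\di(x,y)=\|h(x)-h(y)\|$, where $h\colon N\to C_A\subset\vs$ is the semi-conjugacy produced by Theorem \ref{teoSemiConjLinAtt}. In the singleton case $\Lambda=\{p\}$ that theorem explicitly promotes $h$ to a homeomorphism, so the metric $\di$ is the pullback by a homeomorphism of the norm metric on $C_A$, and hence induces the same topology on $N$ as the original metric of $X$ restricted to $N$.

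I do not anticipate a serious obstacle, as the corollary is essentially a rephrasing of Proposition \ref{propCatAtt} for $y=p$; the only care needed is the observation (which I would state explicitly) that the singleton clause of Theorem \ref{teoSemiConjLinAtt} is exactly what upgrades the pullback pseudo-metric to a topologically equivalent metric, giving both the Lyapunov and the equivalence properties at once.
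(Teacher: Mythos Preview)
Your argument is correct and follows the same route as the paper, which simply records that the corollary is a direct consequence of Proposition \ref{propCatAtt}. You have usefully spelled out the two points the paper leaves implicit: the computation $\dot\liap=-\liap$ via $\phi_t(p)=p$, and the topological equivalence of $\di$ coming from the fact that in the singleton case the semi-conjugacy $h$ of Theorem \ref{teoSemiConjLinAtt} is a homeomorphism.
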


\begin{proof}
 It is a direct consequence of Proposition \ref{propCatAtt}.
\end{proof}

\begin{prop}
\label{propLiapCatAtt}
 Every attractor $\Lambda$ admits a positive and decreasing Lyapunov function $\liap$
 satisfying $\dot \liap=-\liap$.
\end{prop}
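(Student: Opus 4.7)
The plan is to push the norm on $\vs$ back through the semi-conjugacy provided by Theorem \ref{teoSemiConjLinAtt}. Concretely, let $h\colon N\to C_A$ be a semi-conjugacy between $\phi|N$ and the linear attractor $\psi_t(x)=e^{-t}x$ on the cone $C_A\subset\vs$, and define
\[
 \liap(x)=\|h(x)\|,
\]
where $\|\cdot\|$ is the norm of $\vs$ from equation (\ref{ecuNorm}). Continuity of $\liap$ is immediate from the continuity of $h$ and of the norm.

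The catenary-type identity is a one-line verification: since $h$ is a semi-conjugacy,
\[
 \liap(\phi_t(x))=\|h(\phi_t(x))\|=\|\psi_t(h(x))\|=e^{-t}\|h(x)\|=e^{-t}\liap(x),
\]
so differentiating at $t=0$ gives $\dot\liap(x)=-\liap(x)$. In particular $\dot\liap$ exists everywhere on $N$, with no need for the smoothing integral trick used in Theorem \ref{estAs}.

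To conclude that $\liap$ is positive on $N\setminus\Lambda$ (equivalently, that $\liap$ vanishes exactly on $\Lambda$), I would revisit the explicit form of $h$ from the proof of Theorem \ref{teoSemiConjLinAtt}: for $x\in\Sigma$ and $t\geq 0$ one has $h(\phi_t(x))=e^{-t}i(x)$ with $i(x)\in \vs_1$, and $h$ sends $\Lambda$ to $0_\vs$. Since every element of $\vs_1$ has first coordinate equal to $1$, we have $i(x)\neq 0_\vs$, hence $h(\phi_t(x))\neq 0_\vs$ for all $t\geq 0$ and $x\in\Sigma$. Thus $h(y)=0_\vs$ exactly when $y\in\Lambda$, which shows $\liap>0$ on $N\setminus\Lambda$ and $\liap=0$ on $\Lambda$. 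Combined with $\dot\liap=-\liap<0$ on $N\setminus\Lambda$, this gives the positive decreasing Lyapunov function with the required catenary-type relation.

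There is no genuine obstacle here; the work was already done in constructing the semi-conjugacy with the linear model, and the proposition is essentially a transport of structure. The only point that needs a moment of care is verifying that the only preimage of the apex of the cone is $\Lambda$, which is why I would be explicit about the role of the embedding $i\colon\Sigma\to\vs_1$.
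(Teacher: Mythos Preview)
Your proof is correct and follows exactly the approach of the paper: define $\liap(x)=\|h(x)\|$ via the linear model of Theorem~\ref{teoSemiConjLinAtt} and read off $\dot\liap=-\liap$ from the semi-conjugacy. You supply more detail than the paper does, in particular the verification that $h^{-1}(0_\vs)=\Lambda$ using the embedding into $\vs_1$, which is a worthwhile point to make explicit.
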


\begin{proof}
Taking a linear model as in the proof of Proposition \ref{propCatAtt}, 
we have that $\liap(x)=\|h(x)\|$ satisfies $\dot \liap=-\liap$.
\end{proof}

\subsection{Repellers}
\label{secRep}
If $\Lambda$ is a repeller we obtain results that are similar with those that we proved for attractors. 
Let us remark that a pseudo-metric $\di$ as in Proposition \ref{propCatAtt} 
will satisfy $\dot\di=\di$ for a repeller $\Lambda$. 
Also, the Lyapunov function $\liap$ for a repeller, analogous to Proposition \ref{propLiapCatAtt}, 
satisfies $\dot\liap=\liap$.

\section{Catenary functions} 
\label{secCatFunMet}
In this section we will construct catenary functions, metrics and pseudo-metrics for an isolated set. 
In Section \ref{secCatAtt} we construct a differentiable catenary function for an 
asymptotically stable equilibrium point of a differential equation in $\R^n$.

\subsection{Catenary functions}
\label{secCatIso}
Let $\Lambda$ be an isolated set of the partial flow $\phi\colon\Gamma\to X$ of the metric space $X$.

\begin{df}
\label{dfCatPsMet}
A \emph{catenary pseudo-metric} for the isolated set $\Lambda$ 
is a continuous pseudo-metric $\di\colon N\times N\to\R$ 
defined on an isolating neighborhood $N$
such that: 
\begin{enumerate}
  \item $\ddot\di=\di$ and 
  \item $\di(x,y)=0$ if and only if $x=y$ or $x,y\in\Lambda$.
\end{enumerate}
If $\di(x,y)=0$ implies $x=y$ (i.e. $\Lambda$ is a singleton) we say that $\di$ is a \emph{catenary metric}.
\end{df}

\begin{thm}
\label{teoCatMetSet}
Every isolated set admits a catenary pseudo-metric. 
If the isolated set is a singleton then we obtain a catenary metric.
\end{thm}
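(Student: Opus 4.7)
The plan is to produce a catenary pseudo-metric by superposing two one-sided pseudo-metrics, one decaying and one growing at rate one along the flow, exploiting the decomposition $\Lambda = \Lambda_\alpha \cap \Lambda_\omega$ furnished by Theorem \ref{teoBigBang}.

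First I would replace the partial flow by its extension to an isolating universe $Y$ as in Theorem \ref{teoBigBang}. Since the definition of isolating universe includes a homeomorphism $h$ between an isolating neighborhood of $\Lambda$ in $X$ and its image in $Y$ conjugating the two partial flows, it suffices to construct a catenary pseudo-metric on a neighborhood of $h(\Lambda)$ inside $Y$: the result pulls back to $X$ along $h$ without changing any of the relevant vanishing or derivative properties. Inside $Y$, the set $\Lambda_\omega$ is an attractor, $\Lambda_\alpha$ is a repeller, and $\Lambda = \Lambda_\alpha \cap \Lambda_\omega$.

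Next I would invoke Proposition \ref{propCatAtt} applied to the attractor $\Lambda_\omega$ to obtain a continuous pseudo-metric $\di_a$ defined on a neighborhood $N_\omega$ of $\Lambda_\omega$ with $\dot\di_a = -\di_a$, vanishing precisely on the diagonal and on $\Lambda_\omega\times\Lambda_\omega$. Dually, by the remark in Section \ref{secRep}, there is a pseudo-metric $\di_r$ on a neighborhood $N_\alpha$ of $\Lambda_\alpha$ satisfying $\dot\di_r = \di_r$ and vanishing precisely on the diagonal and on $\Lambda_\alpha\times\Lambda_\alpha$. Choose an isolating neighborhood $N$ of $\Lambda$ contained in $N_\omega\cap N_\alpha$ and set
\[
\di(x,y) = \di_a(x,y) + \di_r(x,y), \qquad x,y\in N.
\]
The sum of pseudo-metrics is a pseudo-metric, so the axioms hold automatically. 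Because the pseudo-metrics of Proposition \ref{propCatAtt} come from linear models (Theorem \ref{teoSemiConjLinAtt}), one has the exact identities $\di_a(\phi_t x,\phi_t y) = e^{-t}\di_a(x,y)$ and $\di_r(\phi_t x,\phi_t y) = e^{t}\di_r(x,y)$, so
\[
\di(\phi_t x,\phi_t y) = e^{-t}\di_a(x,y) + e^{t}\di_r(x,y),
\]
a function of $t$ whose second derivative at $t=0$ equals $\di_a(x,y) + \di_r(x,y) = \di(x,y)$. This gives $\ddot\di = \di$. For the zero set: $\di(x,y) = 0$ forces both summands to vanish, so either $x=y$, or $x,y \in \Lambda_\omega\cap\Lambda_\alpha = \Lambda$. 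When $\Lambda$ is a singleton this collapses to $x=y$, so $\di$ is a genuine metric.

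The main point requiring care is the domain bookkeeping. The neighborhoods $N_\omega$ and $N_\alpha$ come from Proposition \ref{propCatAtt} applied to the larger sets $\Lambda_\omega$ and $\Lambda_\alpha$, so I would have to verify that shrinking to a common isolating neighborhood of $\Lambda$ preserves both flow identities (it does, since they hold pointwise wherever the pseudo-metrics are defined) and that the cross-sections used for the attractor and the repeller models can be chosen so that $\di_a$ and $\di_r$ are jointly defined and continuous on $N$. Transferring back to $X$ through the conjugacy $h$ of the isolating universe is then automatic. These steps are bookkeeping rather than conceptual, but must be written out carefully because the space $Y$ differs from $X$ outside the isolating neighborhood.
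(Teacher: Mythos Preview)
Your proposal is correct and follows essentially the same approach as the paper's own proof: pass to the isolating universe via Theorem \ref{teoBigBang}, obtain the contracting pseudo-metric for the attractor $\Lambda_\omega$ from Proposition \ref{propCatAtt} and the expanding one for the repeller $\Lambda_\alpha$ from Section \ref{secRep}, and sum them to get a catenary pseudo-metric whose zero set is computed from $\Lambda_\alpha\cap\Lambda_\omega=\Lambda$. Your additional remarks on the domain bookkeeping and the pull-back through the conjugacy $h$ are more explicit than the paper's version but change nothing in substance.
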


\begin{proof}
Let $\Lambda$ be an isolated set with isolating neighborhood $N$. 
By Theorem \ref{teoBigBang} we can assume that $N\subset Y$ with $Y$ an isolating universe for $\Lambda$ 
and $\Lambda$ is the intersection of the attractor $\Lambda_\omega$ and the repeller $\Lambda_\alpha$.
By Proposition \ref{propCatAtt} we know that there is a 
continuous pseudo-metric $\di_\omega$ on an isolating neighborhood $N_\omega$ of $\Lambda_\omega$ 
satisfying $\dot\di_\omega=-\di_\omega$ 
and $\di(x,y)=0$ if and only if $x=y$ or $x,y\in\Lambda_\omega$. 
In addition we can assume that $N\subset N_\omega$. 
Analogously, by the remarks on Section \ref{secRep}, 
we have a pseudo-metric $\di_\alpha$ on an isolating neighborhood $N_\alpha$ 
satisfying $\dot\di_\alpha=\di_\alpha$ and 
$\di(x,y)=0$ if and only if $x=y$ or $x,y\in\Lambda_\alpha$. 
We will suppose that $N\subset N_\alpha$. 
We have that $\di=\di_\alpha+\di_\omega$ is a 
continuous pseudo-metric on $N_\alpha\cap N_\omega$.
Since $\dot\di_\alpha=\di_\alpha$ and $\dot\di_\omega=-\di_\omega$ we have that 
$\ddot\di=\di$. 
If $\di(x,y)=0$ then $\di_\alpha(x,y)=\di_\omega=0$. 
If $x\neq y$ then $x,y\in\Lambda_\alpha$ and $x,y\in\Lambda_\omega$. 
Then $x,y\in\Lambda$. This proves that $\di$ is a catenary pseudo-metric for $\Lambda$ 
defined in $N_\alpha\cap N_\omega$ that contains the isolating neighborhood $N$ of $\Lambda$.
If $\Lambda$ is a singleton then $\di$ is a metric and consequently a catenary metric.
\end{proof}

\begin{df}
A \emph{catenary Lyapunov function} or \emph{catenary function} 
for an isolated set $\Lambda$ is 
a continuous function $\liap\colon N\to\R$, 
with $N$ an isolating neighborhood of $\Lambda$,
satisfying $\liap(\Lambda)=0$, $\liap(x)>0$ for all $x\in N\setminus\Lambda$
and $\ddot \liap=\liap$ for all $x\in N$
\end{df}

\begin{thm}
\label{teoCatFunIso}
Every isolated set admits a catenary function.
\end{thm}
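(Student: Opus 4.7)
The plan is to mirror the proof of Theorem \ref{teoCatMetSet}, replacing pseudo-metrics by Lyapunov functions. Since every isolated set embeds as the core of an isolating universe (Theorem \ref{teoBigBang}), the construction reduces to combining a Lyapunov function for an attractor with one for a repeller via addition, using the fact that the catenary equation $\ddot\liap = \liap$ is linear.

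More concretely, I would first invoke Theorem \ref{teoBigBang} to place $\Lambda$ inside an isolating universe $Y$ as the intersection $\Lambda = \Lambda_\alpha \cap \Lambda_\omega$ of the repeller $\Lambda_\alpha$ and the attractor $\Lambda_\omega$. Proposition \ref{propLiapCatAtt} then furnishes a continuous Lyapunov function $\liap_\omega \colon N_\omega \to \R$ on an isolating neighborhood of $\Lambda_\omega$ with $\liap_\omega \geq 0$, $\liap_\omega^{-1}(0) = \Lambda_\omega$, and $\dot\liap_\omega = -\liap_\omega$; by the remarks in Section \ref{secRep} we analogously obtain $\liap_\alpha \colon N_\alpha \to \R$ with $\liap_\alpha \geq 0$, $\liap_\alpha^{-1}(0) = \Lambda_\alpha$, and $\dot\liap_\alpha = \liap_\alpha$. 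Shrinking $N$ if necessary, we may assume $N \subset N_\alpha \cap N_\omega$.

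Define $\liap = \liap_\alpha + \liap_\omega$ on $N$. Continuity is clear. Differentiating along the flow twice (which is legitimate since $\liap_\alpha$ and $\liap_\omega$ are exponentials along orbits, hence $C^\infty$ in $t$), we get
\[
\ddot\liap = \ddot\liap_\alpha + \ddot\liap_\omega = \liap_\alpha + \liap_\omega = \liap,
\]
using $\dot\liap_\alpha = \liap_\alpha$ and $\dot\liap_\omega = -\liap_\omega$. Since $\Lambda \subset \Lambda_\alpha$ and $\Lambda \subset \Lambda_\omega$, we have $\liap(\Lambda) = 0$. For the positivity on $N \setminus \Lambda$, take $x \in N \setminus \Lambda$. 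Then $x \notin \Lambda_\alpha \cap \Lambda_\omega$, so $x$ misses at least one of these sets; the corresponding summand is strictly positive and the other is non-negative, whence $\liap(x) > 0$.

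I do not expect a real obstacle here: the content is packaged in Theorem \ref{teoBigBang} and Proposition \ref{propLiapCatAtt}, and linearity of the catenary equation makes the combination immediate. The only mild subtlety is to make sure the two Lyapunov functions can be defined on a common isolating neighborhood of $\Lambda$ (rather than of $\Lambda_\alpha$ or $\Lambda_\omega$ separately), which is handled by shrinking $N$ inside $N_\alpha \cap N_\omega$, exactly as in the proof of Theorem \ref{teoCatMetSet}.
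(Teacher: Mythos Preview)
Your proposal is correct and follows essentially the same approach as the paper: embed $\Lambda$ in an isolating universe via Theorem~\ref{teoBigBang}, take the Lyapunov functions $\liap_\omega$ and $\liap_\alpha$ from Proposition~\ref{propLiapCatAtt} and Section~\ref{secRep}, and set $\liap=\liap_\alpha+\liap_\omega$. Your write-up even spells out the verification of positivity and of $\ddot\liap=\liap$ a bit more explicitly than the paper does.
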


\begin{proof}
As in the proof of Theorem \ref{teoCatMetSet} we assume that $\Lambda$ is embedded in an isolating universe $Y$ 
and $\Lambda=\Lambda_\alpha\cap\Lambda_\omega$.
Since $\Lambda_\omega$ is an attractor, 
by Proposition \ref{propLiapCatAtt} there is positive and decreasing Lyapunov function $\liap_\omega$ 
on a neighborhood $N_\omega$ of $\Lambda_\omega$ such that $\dot\liap=-\liap$. 
Also, by the remarks on Section \ref{secRep}, 
since $\Lambda_\alpha$ is a repeller we have a positive and increasing Lyapunov function 
$\liap_\alpha$ such that $\dot\liap_\alpha=\liap_\alpha$. 
Then we have that 
$\liap=\liap_\alpha+\liap_\omega$
is a catenary function for $\Lambda$. 
\end{proof}

\begin{rmk}
  Another construction for the proof of Theorem \ref{teoCatFunIso} is as follows. 
  Consider a catenary pseudo-metric $\di$ for $\Lambda$ given by Theorem \ref{teoCatMetSet} 
  and define $\liap(x)=\di(x,\Lambda)=\inf_{y\in\Lambda}\di(x,y)$. 
  Note that in this case $\inf_{y\in\Lambda}\di(x,y)=\di(x,z)$ for all $z\in\Lambda$.
\end{rmk}

For the study of isolated sets, Conley \cite{Conley78} considered decreasing Lyapunov functions vanishing on $\Lambda$. 
In general, such a function will not have a definite sign.
A special Lyapunov function of this type can be constructed as follows.

\begin{coro}
\label{coroDecLyapIsoSet} Given an isolated set $\Lambda$ there is a continuous 
function $\liap_1\colon N\to\R$, where $N$ is an isolating neighborhood 
 of $\Lambda$, such that $\liap_1(\Lambda)=0$, $\dot\liap_1(x)<0$ for all $x\in N\setminus\Lambda$ 
 and $\ddot\liap_1=\liap_1$ in $N$.
\end{coro}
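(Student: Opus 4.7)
The plan is to obtain $\liap_1$ as a signed combination of the two one-sided Lyapunov functions already constructed in the proof of Theorem~\ref{teoCatFunIso}. Embed $\Lambda$ in an isolating universe $Y$ via Theorem~\ref{teoBigBang}, so that $\Lambda=\Lambda_\alpha\cap\Lambda_\omega$ where $\Lambda_\omega$ is an attractor and $\Lambda_\alpha$ is a repeller. By Proposition~\ref{propLiapCatAtt} take a positive decreasing Lyapunov function $\liap_\omega$ on a neighborhood $N_\omega$ of $\Lambda_\omega$ with $\dot\liap_\omega=-\liap_\omega$, and by the symmetric remarks of Section~\ref{secRep} take a positive increasing Lyapunov function $\liap_\alpha$ on a neighborhood $N_\alpha$ of $\Lambda_\alpha$ with $\dot\liap_\alpha=\liap_\alpha$. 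Pick an isolating neighborhood $N$ of $\Lambda$ contained in $N_\alpha\cap N_\omega$.

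Now define
\[
\liap_1=\liap_\omega-\liap_\alpha\qquad\text{on }N.
\]
The function is continuous, and $\liap_1(\Lambda)=0$ because $\Lambda\subset\Lambda_\alpha$ and $\Lambda\subset\Lambda_\omega$, so both summands vanish there. Along orbits one computes
\[
\dot\liap_1=\dot\liap_\omega-\dot\liap_\alpha=-\liap_\omega-\liap_\alpha,\qquad \ddot\liap_1=\liap_\omega-\liap_\alpha=\liap_1,
\]
using $\dot\liap_\omega=-\liap_\omega$ and $\dot\liap_\alpha=\liap_\alpha$ twice. Thus $\ddot\liap_1=\liap_1$ automatically, which is the payoff of mixing the two functions with opposite signs.

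The only point requiring care is the strict inequality $\dot\liap_1(x)<0$ on $N\setminus\Lambda$. Since $\Lambda=\Lambda_\alpha\cap\Lambda_\omega$, any $x\in N\setminus\Lambda$ lies outside at least one of $\Lambda_\alpha,\Lambda_\omega$; the corresponding summand among $\liap_\alpha(x),\liap_\omega(x)$ is then strictly positive (since $\liap_\omega$ is positive off $\Lambda_\omega$ and $\liap_\alpha$ is positive off $\Lambda_\alpha$), while the other is nonnegative, so $-\liap_\omega(x)-\liap_\alpha(x)<0$. This delivers the strict decrease.

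I do not foresee a real obstacle here: the construction is an algebraic twist on Theorem~\ref{teoCatFunIso} in which the positive attractor term and the positive repeller term are combined with opposite signs to turn the catenary identity $\ddot\liap=\liap$ from a balanced hyperbolic bound into a strictly decreasing Lyapunov function. The sharpness of the decomposition $\Lambda=\Lambda_\alpha\cap\Lambda_\omega$ supplied by Theorem~\ref{teoBigBang} is exactly what makes the strict sign on $\dot\liap_1$ work outside $\Lambda$.
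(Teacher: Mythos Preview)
Your proof is correct and in fact produces exactly the same function as the paper's argument. The paper simply takes the catenary function $\liap=\liap_\alpha+\liap_\omega$ of Theorem~\ref{teoCatFunIso} and sets $\liap_1=-\dot\liap$; since $\dot\liap_\alpha=\liap_\alpha$ and $\dot\liap_\omega=-\liap_\omega$, this gives $-\dot\liap=\liap_\omega-\liap_\alpha$, which is precisely your construction. Your version unpacks the components explicitly, while the paper's one-line proof exploits that the catenary identity $\ddot\liap=\liap$ is preserved under time differentiation.
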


\begin{proof}
Consider a catenary function $\liap$ given by Theorem \ref{teoCatFunIso}.
The result follows by considering $\liap_1=-\dot\liap$.
\end{proof}

\subsection{Catenary functions are hyperbolic}
In \cite{WY} isolated sets for smooth vector fields on manifolds are considered. 
They study, among other things, the relationship between isolating blocks and hyperbolic Lyapunov functions.
The following is our topological version of \cite{WY}*{Definition 1.5}.

\begin{df}
A \emph{hyperbolic Lyapunov function} 
for an isolated set $\Lambda$ is 
a continuous function $\liap\colon N\to\R$, 
with $N$ an isolating neighborhood of $\Lambda$,
satisfying: 
\begin{enumerate}
  \item $\liap(\Lambda)=0$, $\liap(x)>0$ for all $x\in N\setminus\Lambda$ and 
  \item if $\dot\liap(x)=0$ and $x\in N\setminus\Lambda$ then $\ddot\liap(x)\neq 0$.
\end{enumerate}
\end{df}

\begin{prop}
\label{propCatHyp}
  Every catenary function is a hyperbolic Lyapunov function.
\end{prop}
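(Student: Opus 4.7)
The plan is to verify the two conditions in the definition of a hyperbolic Lyapunov function directly from the definition of a catenary function, since both hypotheses overlap almost entirely.

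First I would observe that condition (1) of a hyperbolic Lyapunov function is verbatim the same positivity condition demanded in the definition of a catenary function: $\liap(\Lambda)=0$ and $\liap(x)>0$ for $x\in N\setminus\Lambda$. So nothing needs to be done for (1) beyond citing the definition.

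For condition (2), I would argue by direct substitution. Take any $x\in N\setminus\Lambda$ with $\dot\liap(x)=0$; we must show $\ddot\liap(x)\neq 0$. But the defining property of a catenary function is $\ddot\liap=\liap$ on $N$, so $\ddot\liap(x)=\liap(x)$. Since $x\notin\Lambda$, the positivity part of the definition gives $\liap(x)>0$, hence $\ddot\liap(x)>0$ and in particular $\ddot\liap(x)\neq 0$. (Note that the hypothesis $\dot\liap(x)=0$ is not even used; the catenary identity alone forces the non-degeneracy of critical points outside $\Lambda$.)

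There is no real obstacle here; the proposition is essentially a tautological consequence of $\ddot\liap=\liap$ combined with strict positivity on $N\setminus\Lambda$. The only thing to keep in mind is that both conditions refer to the same isolating neighborhood $N$, so the function produced by the catenary hypothesis can be used without shrinking its domain.
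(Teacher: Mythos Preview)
Your argument is correct and matches the paper's proof essentially verbatim: the paper simply observes that $\ddot\liap=\liap>0$ on $N\setminus\Lambda$, so the hyperbolicity condition holds. Your parenthetical remark that the hypothesis $\dot\liap(x)=0$ is never actually used is accurate and worth noting.
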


\begin{proof}
Since a catenary function $\liap$ is positive in $N\setminus\Lambda$ and $\ddot \liap=\liap$ we have 
that $\ddot\liap$ is positive in $N\setminus\Lambda$. Then $\liap$ is a hyperbolic Lyapunov function.
\end{proof}

\subsection{More catenary functions}
\newcommand{\liapa}{\liap_1}
\newcommand{\liapb}{\liap_2}
\newcommand{\block}{\mathcal B}
Let $N$ be an isolating neighborhood of the isolated set $\Lambda$. 
Let $\liapa$ be a catenary function defined on $N$. 
Fix $\delta>0$ such that the set
$$\block=\{x\in N:\liapa (x)\leq\delta\}$$
is contained in the interior of $N$. 

\begin{rmk}
  A set like $\block$ is sometimes called \emph{isolating block}. 
  Since a precise definition of isolating block is a little involved and depends on the author we will not use this terminology. 
  See for example \cites{Conley78,Ch} for more on this. 
\end{rmk}

The definitions that follows are standard.
Consider the sets 
\[
  \Sigma_s=\{x\in\partial \block:\exists t<0\hbox{ with } \phi_{(t,0)}(x)\cap \block=\emptyset\},
\]
\[
  \Sigma_u=\{x\in\partial \block:\exists t>0\hbox{ with } \phi_{(0,t)}(x)\cap \block=\emptyset\}.
\]
\begin{rmk}
  With the previous notation we have that $\partial B=\Sigma_s\cup \Sigma_u$. 
  This follows because $\ddot \liapa>0$ in $\partial \block$. 
  In fact, $\Sigma_s=\{x\in\partial\block:\dot\liapa(x)\leq 0\}$ and 
  $\Sigma_u=\{x\in\partial\block:\dot\liapa(x)\geq 0\}$. 
  Moreover, $\Sigma_s\setminus \Sigma_u$ and $\Sigma_u\setminus\Sigma_s$ are cross sections.
\end{rmk}
Define
$$W^s=\{x\in \block:\phi_{\R^+}(x)\subset \block\},$$
$$W^u=\{x\in \block:\phi_{\R^-}(x)\subset \block\}.$$
Denote by $[-\infty,+\infty]$ the compactification with two points of $\R$.
Define the functions 
$T^s,T^u\colon \block\to [-\infty,+\infty]$ 
by
\[
  T^s(x)=\inf\{t\leq 0:\phi_{[t,0]}(x)\subset \block\},
\]
\[
  T^u(x)=\sup\{t\geq 0:\phi_{[0,t]}(x)\subset \block\}.
\]
It is easy to see that $T^s$ and $T^u$ are continuous.
Define $T\colon \block\to [0,+\infty]$ by $$T(x)=T^u(x)-T^s(x).$$ 
Since $T^s$ and $T^u$ are continuous we have that $T$ is continuous.
Notice that $T(x)=+\infty$ if and only if $x\in W^s\cup W^u$.
Introduce the notation 
\[
\left\{
\begin{array}{l}
\pi_sx=\phi_{T^s(x)}(x)\hbox{ for }x\notin W^u,\\
\pi_ux=\phi_{T^u(x)}(x)\hbox{ for }x\notin W^s.
\end{array}
\right.
\]

\begin{rmk}
  Take $x\in \block\setminus(W^s\cup W^u)$. 
  In this case $\pi_sx,\pi_ux\in\partial \block$. 
  Assume that $\ddot\liap=a^2\liap$ on the orbit of $x$ for $a\in\R$ and some function $\liap$ defined on $\block$. 
  Then, there are constants $b,c\in\R$ such that $\liap(\phi_t(x))=be^{at}+ce^{-at}$ 
  if $T^s(x)\leq t\leq T^u(x)$. 
  If we know the values $\liap(\pi_sx)$ and $\liap(\pi_ux)$ we can calculate the values of $b$ and $c$. 
  This is what we did in order to obtain the expression of $\liap$ given at equation (\ref{ecuCatFun}) in the next proof.
\end{rmk}

\begin{thm}
\label{teoCatFun}
Given a continuous function $a\colon \block\setminus \Lambda\to \R^+$ with $\dot a=0<\inf a$ and 
a continuous function $f\colon \partial \block\to\R$ 
then there is a unique continuous function $\liapb\colon \block\to\R$ such that 
\begin{enumerate}
\item $\liapb(\Lambda)=0$,
\item $\ddot \liapb(x)=a^2(x)\liapb(x)$ for all $x\in \block$,
\item $\liapb(x)=f(x)$ for all $x\in \partial \block$.
\end{enumerate}
If in addition $f>0$  
then $\liapb(x)>0$ for all $x\in \block\setminus\Lambda$.
\end{thm}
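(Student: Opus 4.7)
The equation $\ddot\liapb=a^2\liapb$ is a second-order linear ODE with constant coefficient along each orbit (since $\dot a=0$), so the general solution along the orbit through $x$ has the form $u(t)=be^{a(x)t}+ce^{-a(x)t}$ with $b,c\in\R$ depending on the orbit. The plan is to determine $(b,c)$ orbit by orbit from the boundary data, write down an explicit formula for $\liapb(x)=u(0)$, and then verify continuity, boundary matching, uniqueness and positivity.

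For $x\in \block\setminus(W^s\cup W^u\cup\Lambda)$, both $T^s(x)<0$ and $T^u(x)>0$ are finite, and the Dirichlet conditions $u(T^s(x))=f(\pi_s x)$, $u(T^u(x))=f(\pi_u x)$ determine $(b,c)$ uniquely, giving
\[
\liapb(x)=\frac{f(\pi_s x)\sinh(a(x)T^u(x))-f(\pi_u x)\sinh(a(x)T^s(x))}{\sinh(a(x)(T^u(x)-T^s(x)))}.
\]
For $x\in W^s\setminus\Lambda$ only $T^s(x)$ is finite; requiring $u$ to remain bounded along the forward orbit, which accumulates on $\Lambda$ where $\liapb$ should vanish, forces the growing mode to die and leaves $\liapb(x)=f(\pi_s x)e^{a(x)T^s(x)}$. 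The symmetric one-sided expression is used on $W^u\setminus\Lambda$, and $\liapb\equiv 0$ on $\Lambda$.

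The main technical step is continuity of this piecewise definition. Off $W^s\cup W^u\cup\Lambda$ continuity of the $\sinh$ formula follows from continuity of $T^s,T^u,\pi_s,\pi_u,a,f$. To match at a point of $W^s\setminus\Lambda$ (where $T^u\to+\infty$) I divide numerator and denominator by $e^{a(x)T^u(x)}$ and pass to the limit: the $f(\pi_u x)$ terms die and the result converges to $f(\pi_s x)e^{a(x)T^s(x)}$, matching the one-sided formula; the same argument handles $W^u\setminus\Lambda$. For $x\to p\in\Lambda$ both $T^u(x)\to+\infty$ and $T^s(x)\to-\infty$ (since $\Lambda$ is isolated and lies in the interior of $\block$, any failure of this would produce an orbit in $\block$ meeting $\partial\block$), and the same dominant-exponential estimate yields a bound of the form $|\liapb(x)|\le C(\sup|f|)\bigl(e^{a(x)T^s(x)}+e^{-a(x)T^u(x)}\bigr)\to 0$. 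The hypothesis $\inf a>0$ is essential here to make the denominator grow at a uniform exponential rate.

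Once continuity is in hand, the three required properties follow easily: (1) by definition; (2) because by construction $u(t)=\liapb(\phi_t(x))$ solves $\ddot u=a(x)^2u$ along each orbit; (3) because on $\partial\block$ one of $T^s(x),T^u(x)$ vanishes and the formula collapses to $f(x)$. Uniqueness is an orbit-by-orbit argument: on orbits with both exit times finite any continuous solution is pinned down by its two boundary values, and on orbits in $W^s\cup W^u$ continuity at $\Lambda$ with value $0$ plus the single boundary value rules out the exponentially growing mode. Positivity when $f>0$ is immediate from the formulas: in the $\sinh$ expression one has $\sinh(a(x)T^u(x))>0>\sinh(a(x)T^s(x))$ and $\sinh(a(x)(T^u(x)-T^s(x)))>0$, while the one-sided exponential expressions are manifestly positive on $W^s\cup W^u$.
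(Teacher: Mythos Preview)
Your proof is correct and follows essentially the same route as the paper: the same explicit $\sinh$ formula on orbits with two finite exit times, the same one-sided exponential formulas on $W^s$ and $W^u$, and the same dominant-exponential estimate (using $\inf a>0$) to get continuity at $\Lambda$. Your positivity argument via the signs of $\sinh(a(x)T^u(x))>0>\sinh(a(x)T^s(x))$ is a bit more direct than the paper's case analysis on the coefficients $A,B$ of $Ae^{at}+Be^{-at}$, and your uniqueness discussion is spelled out in slightly more detail, but these are cosmetic differences rather than a different strategy.
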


\begin{proof}
Without loss of generality assume that $\Lambda=\{p\}$.
Take $x$ in the interior of $\block$ (where $T(x)>0$) and define
\begin{equation}
  \label{ecuCatFun}
   \liapb(x)=\frac{f(\pi_sx)\sinh(a(x)T^u(x))-f(\pi_ux)\sinh(a(x)T^s(x))}{\sinh(a(x)T(x))}
\end{equation}
if $x\notin W^s\cup W^u$. 
For $x\in W^s$, $x\neq p$,  define 
\[
 \liapb(x)=f(\pi_sx)e^{a(x)T^s(x)}
\]
and if $x\in W^u$, $x\neq p$, define 
\[
 \liapb(x)=f(\pi_ux)e^{-a(x)T^u(x)}.
\]
Finally, $\liapb(p)=0$ and $\liapb(x)=f(x)$ for all $x\in\partial\block$.
Given the differential equation and the boundary condition there is no other possible choice for $\liapb$.
We have to check that it works.
Notice that $T^s(\phi_t(x))=T^s(x)-t$ and 
$T^u(\phi_t(x))=T^u(x)-t$. 
Then $\dot T^s(x)=\dot T^u(x)=-1$ and $\dot T(x)=0$ for all $x\in \block$.
Therefore $\liapb$ satisfies $\ddot \liapb(x)=a^2(x) \liapb(x)$ for all 
$x\in \block$. 

Let us prove the continuity of $\liapb$.
We show the continuity at the point $p$. Consider a sequence $x_n\to p$. 
First suppose that $x_n\in W^s$. 
In this case $\liapb(x_n)=f(\pi_ux_n)e^{-a(x_n)T^u(x_n)}$.
Since $T^u(x_n)\to+\infty$, $f$ is bounded and $\inf a>0$ we have that $\liapb(x_n)\to 0=\liapb(p)$.
For $x_n\in W^u$ a similar argument proves that $\liapb(x_n)\to 0$. 
Consider $x_n\notin W^s\cup W^u$. 
In this case $T^u(x_n),T(x_n),-T^s(x_n)\to+\infty$. 
Therefore, we have the following equivalent expressions (where $x$ denotes $x_n$)
\[
\begin{array}{ll}
 \displaystyle\frac{\sinh(a(x)T^u(x))}{\sinh(a(x)T(x))}&=\displaystyle\frac{e^{a(x)T^u(x)}-e^{-a(x)T^u(x)}}{e^{a(x)T(x)}-e^{-a(x)T(x)}}\\
 & \sim \displaystyle\frac{e^{a(x)T^u(x)}}{e^{a(x)[T^u(x)-T^s(x)]}}
 = e^{a(x)T^s(x)}
\end{array}
\]
and $e^{a(x_n)T^s(x_n)}\to 0$ because $\inf a>0$. 
Since $f$ is bounded we have that 
$$\frac{f(\pi_sx)\sinh(a(x)T^u(x))}{\sinh(a(x)T(x))}\to 0.$$
In the same way we can prove that
\[
 f(\pi_u(x))\frac{\sinh(a(x)T^s(x))}{\sinh(a(x)T(x))}\to 0
\]
if $x\to p$. Then, $\liapb(x_n)\to 0$ if $x_n\to p$. This proves the continuity 
of $\liapb$ at $p$. 
The proof of the continuity at other points is similar. 

Now suppose that $f$ is positive. 
We have to prove that $\liapb(x)>0$ for all $x\neq p$. 
We know that $u(t)=\liapb(\phi_t(x))=Ae^{a(x)t}+B e^{-a(x)t}$ 
and $u(t_0),u(t_1)>0$ if $\phi_{t_0}(x)\in\Sigma_s$ and 
$\phi_{t_1}(x)\in\Sigma_u$. 
We have to prove that $u(t)>0$ if $t_0<t<t_1$. 
If $A=0$ or $\block=0$ it is trivial. 
If $A>0$ and $\block>0$ it is also trivial. 
If $A$ and $\block$ have different signs then $\dot u$ has constant sign, 
therefore $u(t)>0$ for all $t\in[t_0,t_1]$. 
This finishes the proof.
\end{proof}

This result extends Theorem \ref{teoCatFunIso} by taking $a=1$. 
We find it interesting because it could be used to construct Lyapunov functions of catenary type 
that in addition satisfies more properties as for example being a norm, if we are in a vector space.
For example, if we consider the differential equation $\dot x=-2x$ in $\R^n$, 
it is easy to see that for every norm $\liap \colon \R^n\to\R$ 
it holds that $\ddot\liap=4\liap$.

\subsection{Catenary functions for attractors}
\label{secCatAtt}
With the next Proposition we wish to remark that the Lyapunov functions obtained in 
Proposition \ref{propLiapCatAtt} are the catenary functions for an attractor.
In Theorem \ref{teoCatAttDiff} we construct a differentiable Lyapunov function  of catenary type
for an asymptotically stable equilibrium point in $\R^n$ of a $C^1$ differential equation. 
The result is based on Massera's Theorem on $C^1$ Lyapunov functions.

\begin{prop}
If $\Lambda$ is an attractor then every catenary function of $\Lambda$ satisfies $\dot\liap=-\liap$. 
For a repeller we obtain $\dot\liap=\liap$.
\end{prop}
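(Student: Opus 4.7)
The plan is to exploit the general solution of the catenary equation along orbits and then pin down one of the two coefficients using the attractor structure. Fix $x\in N$ and set $u(t)=\liap(\phi_t(x))$ on the (open) interval of times for which $\phi_t(x)\in N$. The catenary identity $\ddot\liap=\liap$ applied at every point of the orbit translates to $u''=u$, hence
\[
u(t)=a(x)e^{t}+b(x)e^{-t},\qquad a(x)+b(x)=\liap(x),\qquad u'(0)=a(x)-b(x).
\]
So $\dot\liap(x)=-\liap(x)$ is equivalent to $a(x)=0$, and the whole problem reduces to showing that the ``unstable'' coefficient vanishes at every $x$.

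To force $a(x)=0$ I would invoke the attractor hypothesis through Proposition \ref{propLiapCatAtt}, which supplies a positive decreasing Lyapunov function $\liap_\omega$ with $\dot\liap_\omega=-\liap_\omega$ on an isolating neighborhood of $\Lambda$. Its sublevel sets $\{\liap_\omega\le c\}$ are compact, forward invariant (because $\dot\liap_\omega<0$ off $\Lambda$), and for $c$ small enough sit inside $N$. Since the identity $\dot\liap+\liap=0$ is pointwise, it is enough to verify it on such a sub-neighborhood, so I replace $N$ by $\{\liap_\omega\le c\}$. On this smaller $N$ every forward orbit remains in $N$, $u$ is defined on $[0,+\infty)$, and the $\omega$-limit $\omega(x)$ is a compact invariant subset of $N$; as $\Lambda$ is the maximal invariant set in any isolating neighborhood, $\omega(x)\subseteq\Lambda$, so $u(t)\to 0$ as $t\to+\infty$ by continuity of $\liap$ together with $\liap(\Lambda)=0$.

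From here the conclusion is automatic: a solution of $u''=u$ that tends to $0$ at $+\infty$ must have $a(x)=0$, whence $u(t)=\liap(x)e^{-t}$ and $\dot\liap(x)=u'(0)=-\liap(x)$. The repeller statement follows by reversing time, which preserves $\ddot\liap=\liap$ but flips the sign of $\dot\liap$, giving $\dot\liap=\liap$. The only delicate point I anticipate is the shrinking step: the attractor definition as stated only guarantees backward escape from a given isolating neighborhood, not forward invariance, so a reader may legitimately ask whether the identity is being proved on the original $N$ or on a sub-neighborhood. I would handle this cleanly by working throughout on a sublevel set of $\liap_\omega$, emphasizing that catenary functions are inherently pointwise objects.
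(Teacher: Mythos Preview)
Your argument is correct and follows the same line as the paper's: write $\liap(\phi_t(x))=a(x)e^{t}+b(x)e^{-t}$, use that the forward orbit stays in an isolating neighborhood and converges to $\Lambda$ so that $\liap(\phi_t(x))\to 0$, conclude $a(x)=0$, and hence $\dot\liap=-\liap$.

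The one place where you are more careful than the paper is the forward-invariance step. The paper simply asserts ``$\phi_t(x)$ is in the isolating neighborhood for all $t\geq 0$'', whereas the definition of attractor used here only says that points of $N\setminus\Lambda$ escape $N$ in \emph{backward} time. You handle this by passing to a sublevel set $\{\liap_\omega\le c\}$ of the Lyapunov function from Proposition~\ref{propLiapCatAtt}, which is genuinely forward invariant and contained in $N$; this is the right fix and makes the argument airtight on that sub-neighborhood. Your closing caveat is accurate: as stated, both your proof and the paper's establish $\dot\liap=-\liap$ on some (possibly smaller) isolating neighborhood, and extending to all of the original $N$ would require knowing that forward orbits from $N$ stay in $N$, which is an extra hypothesis on $N$ rather than on $\Lambda$. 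Since the identity is pointwise and the proposition is really a statement about the germ of $\liap$ near $\Lambda$, this is not a defect in your argument.
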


\begin{proof}
  Given a catenary function $\liap$ for $\Lambda$ we know, by definition, that $\ddot\liap=\liap$. 
  Given a point $x$ in an isolating neighborhood of $\Lambda$ 
  we know that $\liap(\phi_t(x))=ae^t+be^{-t}$ for some 
  $a,b\in\R$ depending on $x$. 
  Since $\phi_t(x)$ is in the isolating neighborhood for all $t\geq 0$, 
  $\liap$ vanishes on $\Lambda$ and $\liap$ is continuous 
  we have that $a=0$. 
  Therefore, $\liap(\phi_t(x))=be^{-t}$ and $\dot\liap=-\liap$. This finishes the proof.
\end{proof}

\begin{thm}
\label{teoCatAttDiff}
Consider in $\R^n$ the differential equation $\dot x=f(x)$, 
$f\colon\R^n\to\R^n$ a function of class $C^1$, 
with an asymptotically stable equilibrium point $p\in\R^n$.  
Then, there is a differentiable positive Lyapunov function $\liap\colon U\to\R$, 
where $U$ is a neighborhood of $p$,  
satisfying $\dot\liap=-a\liap$ for some constant $a>0$. 
Moreover, $\liap$ is $C^1$ in $U\setminus\{p\}$.
\end{thm}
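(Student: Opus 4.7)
The plan is to combine Massera's converse Lyapunov theorem with the cross-section idea from the linear model of Theorem \ref{teoSemiConjLinAtt}, promoted to the differentiable category. Massera supplies a $C^1$ positive Lyapunov function $V$ on a neighborhood $U_0$ of $p$ with $V(p)=0$, $V>0$ off $p$, and $\dot V<0$ on $U_0\setminus\{p\}$. Fix $c>0$ small enough that $\{V\le c\}$ is a compact subset of $U_0$. Then $\Sigma=V^{-1}(c)$ is a $C^1$ hypersurface (because $\dot V=\langle\nabla V,f\rangle<0$ on $\Sigma$ forces $\nabla V\ne 0$ there), and by the same transversality $\Sigma$ is a $C^1$ cross section of the flow.

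For every $x$ in a neighborhood $U$ of $p$ with $x\ne p$ there is a unique time $\tau(x)$ with $\phi_{\tau(x)}(x)\in\Sigma$: if $V(x)>c$, the forward orbit reaches $\Sigma$ at a $\tau(x)>0$ (since $V$ decreases strictly and tends to $0$); if $V(x)<c$, the backward orbit must exit $\{V\le c\}$ in finite time, for otherwise its $\alpha$-limit set would lie in $\{\dot V=0\}=\{p\}$, contradicting the triviality of the unstable set of the attractor. Applying the implicit function theorem to $F(t,x)=V(\phi_t(x))-c$, whose $t$-derivative $\dot V$ is nonzero on $\Sigma$, shows $\tau$ is $C^1$ on $U\setminus\{p\}$, and the cocycle identity $\tau(\phi_t(x))=\tau(x)-t$ gives $\frac{d}{dt}\tau(\phi_t(x))=-1$. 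Pick a constant $a>L$, where $L$ is a Lipschitz constant for $f$ on $U_0$, and define
\[
\liap(x)=e^{a\tau(x)}\ \ \text{for }x\ne p,\qquad \liap(p)=0.
\]
The chain rule immediately yields $\dot\liap=-a\liap$, positivity off $p$, and $C^1$ regularity on $U\setminus\{p\}$.

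The delicate step is differentiability at $p$. Gronwall applied to $\dot y=f(y)$ yields $|\phi_{-t}(x)-p|\le|x-p|\,e^{Lt}$ for $t\ge 0$, so if $\delta=\dist(\Sigma,p)>0$ then $\phi_{\tau(x)}(x)\in\Sigma$ forces $|\tau(x)|\ge L^{-1}\log(\delta/|x-p|)$ for $x$ close to $p$ (where $\tau(x)<0$). Equivalently $\tau(x)\le L^{-1}\log(|x-p|/\delta)$, which gives
\[
0<\liap(x)\le \delta^{-a/L}\,|x-p|^{a/L}.
\]
Since $a/L>1$ this bound shows $\liap(x)=o(|x-p|)$, so $\liap$ is continuous and differentiable at $p$ with $D\liap(p)=0$. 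Combined with the $C^1$ regularity off $p$, this finishes the construction.

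The main obstacle is precisely this endpoint regularity: one cannot in general expect $\liap$ to be $C^1$ at $p$ (it may behave like $|x-p|^{\gamma}$ with $\gamma\notin\Z$), which is why the statement demands only differentiability there. The freedom to enlarge the exponential rate $a$ beyond the Lipschitz constant $L$ is exactly what makes the Gronwall estimate strong enough to absorb the possibly slow escape of backward orbits, and is the reason the theorem is stated with an arbitrary constant $a>0$ rather than the pure catenary rate $a=1$.
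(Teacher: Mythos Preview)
Your proof is correct and follows essentially the same route as the paper's: Massera's $C^1$ Lyapunov function $V$ is used to produce a $C^1$ compact cross section $\Sigma=V^{-1}(c)$, the function $\liap$ is defined via the (signed) time to $\Sigma$ as $\liap(x)=e^{a\tau(x)}$, and differentiability at $p$ is obtained from the Gronwall/Lipschitz estimate $|\phi_{-t}(x)-p|\le e^{Lt}|x-p|$ together with the choice $a>L$. The paper restricts $U$ to the interior of $\Sigma$ (so only the case $\tau<0$ arises) and picks the specific value $a=2k$, yielding the quadratic bound $\liap(x)\le l\|x\|^2$; your version with arbitrary $a>L$ and the two-sided definition of $\tau$ is a harmless variant, and your backward-exit argument via the $\alpha$-limit set and stability of $p$ is a correct (if slightly longer) justification of a step the paper sidesteps by its choice of $U$.
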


\begin{proof}
From \cite{Massera49}*{Theorem 8} we known that there is a $C^1$ Lyapunov function $V$ defined on a compact neighborhood $U'$ of $p$ 
that is positive in $U'\setminus\{p\}$, $V(p)=0$ and 
$\dot V<0$ in $U'\setminus\{p\}$.
Let $m=\min_{x\in\partial U'} V(x)$ and define 
$\Sigma=\{x\in U':V(x)=m/2\}$. 
We know that $\Sigma$ is a compact cross section. 
Moreover, since $V$ is $C^1$ and $\dot V\neq 0$ at $\Sigma$ we can apply the implicit function theorem 
to conclude that $\Sigma$ is a codimension-one submanifold of class $C^1$. 
For a value of $a$ that will be determined, define $\liap(\phi_t(x))=e^{-at}$ for all $x\in\Sigma$ and $t\geq 0$ 
and $\liap(p)=0$. 
If $U=\clos(\phi_{\R^+}(\Sigma))$ we have that $\liap$ is continuous 
in $U$. 
Moreover, $\liap$ is $C^1$ in $U\setminus\{p\}$ because $\Sigma$ and $\phi$ are $C^1$. 

We will show that there is a value of $a$ that makes $\liap$ differentiable at $p$. 
Consider the Euclidean norm $\|\cdot\|$ in $\R^n$, assume that $p$ is the origin and denote $\phi_t(x_0)$ as $x(t)$.
Since $f$ is $C^1$ there is $k>0$ such that $\|f(x)\|\leq k\|x\|$ for all $x\in U$. 
Then $\|\dot x\|\leq k\|x\|$ and 
 $\frac d{dt} \|x(t)\|^2 \geq -2\|x(t)\| \|\dot x(t)\|\geq -2k\|x(t)\|^2.$
If $u(t)=\|x(t)\|^2$ then $\dot u/u\geq -2k$. 
Thus, integrating we obtain $u(t)\geq e^{-2kt}$ and 
\begin{equation}
 \label{ecu1}
\|x(t)\|\geq e^{-kt}\|x_0\|. 
\end{equation}
Take $a=2k$ and $l>0$ such that 
\begin{equation}
\label{ecu2} 
\|x_0\|^{-2}\leq l,\,\forall x_0\in\Sigma. 
\end{equation}
Given $x\in U\setminus\{p\}$ there are $x_0\in\Sigma$ 
and $t\geq 0$ such that $x=\phi_t(x_0)$. 
Since $x_0\in\Sigma$ we have that $\liap(x_0)=1$ and  $\liap(x)=e^{-at}=e^{-2kt}= 
(e^{-kt})^2\leq \|x\|^2/\|x_0\|^2$. The last inequality follows from (\ref{ecu1}). 
Using (\ref{ecu2}) we obtain $\liap(x)\leq l\|x\|^2$. 
Since $\liap\geq 0$, this implies that $\liap$ is differentiable at the origin $p$ 
and the proof ends.
\end{proof}

\subsection{Fake singularities}
\label{subsecFakeSing}
The purpose of this section is to give a model, a semi-conjugacy, for an arbitrary isolated set. 
We will consider a special type of isolated point.

\begin{df}
 An isolated point $p$ of a partial flow $\phi$ is a \emph{fake singularity} 
 if there are $x_s$ and $x_u$, different from $p$, 
 such that: 1) if $x\neq p$ and $\lim_{t\to +\infty}\phi_t(x)=p$ then 
 $x=\phi_t(x_s)$ for some $t\in\R$ and 2) 
 if $x\neq p$ and $\lim_{t\to -\infty}\phi_t(x)=p$ then 
 $x=\phi_t(x_u)$ for some $t\in\R$.
\end{df}

Let $(\Sigma,\dist)$ be a metric space with a point $x_0\in\Sigma$ having a compact neighborhood. 
Define $X=\R\times \Sigma$ and $p=(0,x_0)\in X$. 
We say that a partial flow on $X$ has \emph{horizontal trajectories} 
if every trajectory is contained in a set of the form $\R\times\{x\}$.
Consider the projection $\pi_1(s,x)=s$.

\begin{prop}
\label{propFakeConst}
Consider a continuous function 
$W\colon X\to \R$ such that 
$W(p)=0$, $W(q)>0$ for all $q\neq p$ and 
\begin{equation}
\label{ecuFake}\int_{-1}^0\frac1{W(s,x_0)}ds=\int_0^1\frac1{W(s,x_0)}ds=\infty. 
\end{equation}
Then there is a unique partial flow in $X$ with horizontal trajectories, 
a fake singularity at $p$ and $\dot\pi_1=W$. 
\end{prop}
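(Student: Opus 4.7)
My plan is to define the partial flow by solving, on each horizontal line $\R\times\{x\}$, the autonomous scalar ODE $\dot\sigma=W(\sigma,x)$. Concretely I set $\phi_t(s,x)=(\sigma_x(t;s),x)$, where $\sigma_x(\cdot;s)$ is the maximal solution with $\sigma_x(0;s)=s$. Since $W\ge 0$ and vanishes only at $p$, the function $W(\cdot,x)$ is strictly positive on $\R$ for $x\neq x_0$, and on $\R\setminus\{0\}$ for $x=x_0$. Strict positivity makes $\sigma_x(\cdot;s)$ strictly monotone in $t$ and yields the integral identity
\[
\int_{s}^{\sigma_x(t;s)}\frac{dr}{W(r,x)}=t,
\]
which determines $\sigma_x(t;s)$ uniquely despite $W$ being only continuous. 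The divergence hypothesis makes the zero $s=0$ of $W(\cdot,x_0)$ unreachable in finite time from any $s\neq 0$, so $(-\infty,0)$, $\{0\}$ and $(0,\infty)$ decompose $\R\times\{x_0\}$ into three invariant pieces and $p$ becomes an equilibrium. The identity $\dot\pi_1=W$ is immediate by construction, and uniqueness of $\phi$ among partial flows with horizontal trajectories and this derivative is an immediate consequence of the uniqueness of the ODEs on each line.

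Next I would show that $p$ is an isolated point. Let $K$ be a compact neighborhood of $x_0$ in $\Sigma$ and set $N=[-1,1]\times K$. Suppose $(s,x)\in N\setminus\{p\}$ had its whole orbit in $N$; then the $s$-coordinate along the orbit stays in $[-1,1]$ and is strictly monotone in $t$. If $x\neq x_0$, then $W(\cdot,x)$ has no zero, so by the standard continuation argument a bounded monotone orbit would have to converge to a zero of $W(\cdot,x)$, which does not exist. If $x=x_0$ and $s\neq 0$, the orbit may approach the unique zero at $0$ on one side, but on the other side the same continuation argument forces it to leave $[-1,1]$. In both cases we get a contradiction, so $N$ is an isolating neighborhood of $p$.

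For the fake-singularity condition I take $x_s=(-1,x_0)$ and $x_u=(1,x_0)$. The orbit of $x_s$ equals $(-\infty,0)\times\{x_0\}$: forward, $\sigma_{x_0}(t;-1)$ increases asymptotically to $0$ thanks to $\int_{-1}^0 dr/W(r,x_0)=\infty$; backward, it decreases monotonically on $(-\infty,-1]$, where $W(\cdot,x_0)$ has no zero, so by the same continuation argument the only possible backward limit is $-\infty$, meaning the orbit covers all of $(-\infty,-1]$. Conversely, if $y\neq p$ and $\phi_t(y)\to p$ as $t\to+\infty$, then horizontality of trajectories forces the $\Sigma$-coordinate of $y$ to equal $x_0$ and the $s$-coordinate to tend monotonically to $0$ from below, so $y\in(-\infty,0)\times\{x_0\}$ lies on the orbit of $x_s$. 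The symmetric statement for $x_u$ is analogous.

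The main technical obstacle will be verifying that the natural domain $\Gamma$ is open and that $\phi$ is continuous on $\Gamma$, specifically at the fixed point $p$. The key estimate, which I expect to handle both issues simultaneously, is that for $(s_n,x_n)\to p$ with $s_n>0$ (the case $s_n<0$ being symmetric) one has $\int_{s_n}^{1}dr/W(r,x_n)\to\infty$: by uniform continuity of $W$ on a compact neighborhood of $p$, $W(r,x_n)\le W(r,x_0)+\epsilon$ on $[0,1]$ for $n$ large, so
\[
\int_{s_n}^{1}\frac{dr}{W(r,x_n)}\ge\int_{s_n}^{1}\frac{dr}{W(r,x_0)+\epsilon},
\]
and by monotone convergence $\int_0^1 dr/(W(r,x_0)+\epsilon)\nearrow\int_0^1 dr/W(r,x_0)=\infty$ as $\epsilon\to 0^+$. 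This estimate yields both (i) continuity of $\phi$ at $(t,p)$ for every $t\in\R$, by ruling out any subsequential limit of $\pi_1(\phi_{t_n}(s_n,x_n))$ away from $0$, and (ii) arbitrarily long forward (and by symmetry backward) maximal intervals for points near $p$, giving openness of $\Gamma$ at $(t,p)$. Continuity of $\phi$ at points away from $p$ is standard continuous dependence on initial data.
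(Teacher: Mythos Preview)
Your proof takes essentially the same approach as the paper: define the flow on each horizontal line by solving $\dot u=W(u,x)$, set $\phi_t(s,x)=(u(t),x)$, take $x_s=(-1,x_0)$, $x_u=(1,x_0)$, and use $N=[-1,1]\times K$ as isolating neighborhood. The paper's proof is in fact considerably more terse than yours: it simply writes down $\phi$, declares $p$ a fake singularity and $N$ an isolating neighborhood, and verifies $\dot\pi_1=W$ by a one-line chain rule computation, without discussing uniqueness of ODE solutions (an issue since $W$ is only continuous), openness of $\Gamma$, or continuity of $\phi$ at $p$. Your use of the integral identity $\int_s^{\sigma_x(t;s)}dr/W(r,x)=t$ to secure uniqueness, and your estimate $\int_{s_n}^1 dr/W(r,x_n)\to\infty$ via $W(r,x_n)\le W(r,x_0)+\epsilon$ and monotone convergence, are genuine additions that fill real gaps in the paper's argument.
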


\begin{proof}
Given $(s,x)\in X$ 
let $u(t)$ be the solution of $\dot u(t)=W(u(t),x)$ such that $u(0)=s$. 
Define a partial flow $\phi$ on $X$ 
by 
$
  \phi_t(s,x)=(u(t),x).
$ 
We have a singular point at $p$. 
If we define $x_s=(-1,x_0)$ and $x_u=(1,x_0)$ we have that 
$p$ is a fake singularity;
and if we take a compact neighborhood $U\subset \Sigma$ of $x_0$ 
we have an isolating neighborhood $N=[-1,1]\times U$. 
See Figure \ref{figEntornoAdaptado}. 
Finally, we have 
$$
  \dot \pi_1(s,x)=\frac d{dt} \pi_1(\phi_t(s,x))|_{t=0}
  =\frac d{dt} \pi_1(u(t),x)|_{t=0}
  =\frac d{dt} (u(t)|_{t=0}
  =\dot u
  =W(s,x).
$$
This finishes the proof.
\end{proof}
\begin{figure}[ht]
\center 
\includegraphics{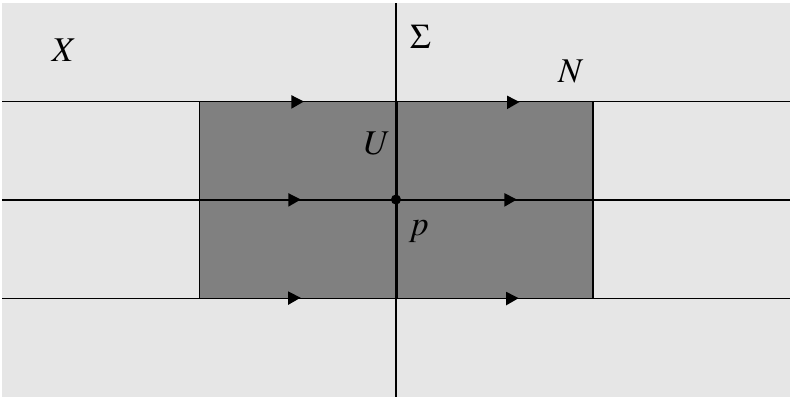}  
\caption{Singular flow box $N$ around the fake singularity $p$.}
\label{figEntornoAdaptado}
\end{figure}

\begin{rmk}
 A function $W$ satisfying the assumptions of the proposition is for example $W(s,x)=|s|+\dist(x,x_0)$, 
 where $\dist$ is the metric of $\Sigma$.
\end{rmk}

The partial flow given by Proposition \ref{propFakeConst}, or a conjugate one, 
will be called as $(W,\Sigma)$-\emph{fake singularity}.

\begin{prop}
 Every fake singularity is a $(W,\Sigma)$-fake singularity. 
\end{prop}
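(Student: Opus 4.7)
The plan is to exhibit a conjugacy between the restriction of $\phi$ to a suitable neighborhood of the fake singularity $p$ and the model partial flow constructed in Proposition \ref{propFakeConst}. First, apply Theorem \ref{teoCatFunIso} to obtain a catenary function $\liap$ on an isolating neighborhood of $p$, and for small $\delta>0$ form the isolating block $\block=\{\liap\le\delta\}$. With the boundary decomposition $\partial\block=\Sigma_s\cup\Sigma_u$ of Section \ref{secCatFunMet}, the fake-singularity hypothesis that $W^s(p)\setminus\{p\}=\phi_\R(x_s)$ and $W^u(p)\setminus\{p\}=\phi_\R(x_u)$ forces $\Sigma_s\cap W^s(p)=\{s_0\}$ and $\Sigma_u\cap W^u(p)=\{u_0\}$ to be singletons.

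Second, build a Poincar\'e-type transit map $\Phi\colon\Sigma_s\to\Sigma_u$ by $\Phi(y)=\phi_{T^u(y)}(y)$ on $\Sigma_s\setminus\{s_0\}$ and $\Phi(s_0):=u_0$. The map is a homeomorphism off of $s_0$; the key point is continuity at $s_0$. For $y_n\to s_0$ with $\Phi(y_n)\to z\in\Sigma_u$ along a subsequence, if $z\ne u_0$ then the negative orbit of $z$ exits $\block$ at a point $\phi_{-\tau}(z)\in\Sigma_s\setminus\{s_0\}$ in some finite time $\tau>0$. Then $\phi_{T^u(y_n)-\tau}(y_n)\to\phi_{-\tau}(z)$, so by continuity of $T^s$ on $\block$ the left-hand side would satisfy $T^s\to 0$, whereas the identity $T^s(\phi_{T^u(y_n)-\tau}(y_n))=\tau-T^u(y_n)\to-\infty$ gives the opposite conclusion, a contradiction.

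Third, set $\Sigma:=\Sigma_s$ with marked point $x_0:=s_0$, and define $h\colon\block\to[-1,1]\times\Sigma$ by $h(y)=(s(y),x(y))$. The $\Sigma$-coordinate is given by $x(y):=\phi_{T^s(y)}(y)$ whenever $T^s(y)>-\infty$ and by $x(y):=\Phi^{-1}(\phi_{T^u(y)}(y))$ whenever $T^u(y)<+\infty$ (the two prescriptions agree on their common domain by definition of $\Phi$), with $x(p):=x_0$; continuity at $p$ follows from exactly the same transit-time argument: for $y_n\to p$, $T^u(\phi_{T^s(y_n)}(y_n))=T^u(y_n)-T^s(y_n)\to+\infty$, so any subsequential limit of $\phi_{T^s(y_n)}(y_n)$ in the compact set $\Sigma_s$ lies in $\Sigma_s\cap W^s(p)=\{s_0\}$. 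The height coordinate $s(y)$ is any continuous function on $\block$, strictly increasing along orbits, equal to $-1$ on $\Sigma_s$, to $+1$ on $\Sigma_u$ and to $0$ at $p$; such a function can be manufactured directly from the catenary function $\liap$ or by patching a rescaling of the transit time parameter with the appropriate asymptotic behavior on $W^s(p)\cup W^u(p)$.

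Finally, setting $W(s,x):=\dot s\bigl(h^{-1}(s,x)\bigr)$ produces a continuous nonnegative function on $h(\block)$ that vanishes exactly at $(0,x_0)$ and is positive elsewhere. Since the stable and unstable orbits of $p$ reach $p$ only as $t\to\pm\infty$, the integrals in \eqref{ecuFake} diverge, so $(W,\Sigma)$ satisfies the hypotheses of Proposition \ref{propFakeConst}, and $h$ conjugates $\phi$ restricted to the interior of $\block$ with the resulting model. The main obstacle is the continuous extension of $\Phi$ and of $x$ across the singular fibre above $x_0$, where the one-orbit structure of the stable and unstable sets supplied by the fake-singularity hypothesis is crucially used; once those continuities are established, the remainder of the argument is a bookkeeping verification.
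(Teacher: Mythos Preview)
Your approach diverges from the paper's and carries a genuine gap. You take $\Sigma=\Sigma_s$ and require a height function $s$ that equals $-1$ on $\Sigma_s$ and $+1$ on $\Sigma_u$; but the tangency set $\Sigma_s\cap\Sigma_u=\{x\in\partial\block:\dot\liap(x)=0\}$ is in general nonempty (for a fake singularity in the plane, for instance, the closed level curve $\{\liap=\delta\}$ must contain points where $\dot\liap$ changes sign), so no such continuous $s$ can exist. Relatedly, the orbit segment in $\block$ through a tangency point is that single point, so $h(\block)$ cannot be the product $[-1,1]\times\Sigma_s$ you are aiming for. Your continuity arguments for the transit map $\Phi$ and for the $\Sigma$-coordinate $x(\cdot)$ are sound, but they do not salvage the construction of $s$, which you leave to a vague ``can be manufactured from $\liap$''.

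The paper avoids all of this with a different and much shorter choice of coordinates. It sets $\Sigma:=\{x\in\block:\dot\liap(x)=0\}$, the \emph{middle} cross-section through $p$, and defines $h(x)=(\dot\liap(x),\pi(x))$, where $\pi(x)$ is the unique point of the $\block$-orbit segment of $x$ lying on $\Sigma$ when one exists, and $\pi(x)=p$ otherwise (the latter exactly when $x\in W^s\cup W^u$). Because $\ddot\liap=\liap>0$ off $p$, the first coordinate $\dot\liap$ is strictly increasing along non-constant orbits, so $\pi$ is well defined; and in these coordinates $\dot\pi_1=\ddot\liap=\liap$, so one may take $W=\liap\circ h^{-1}$ directly. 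No Poincar\'e map, no patching of a transit-time parameter, and the divergence conditions in \eqref{ecuFake} follow from the explicit form $\liap(\phi_t)=\delta e^{\mp t}$ along the stable and unstable orbits. Your hint that $s$ ``can be manufactured directly from the catenary function'' is exactly right; the missing step is to take $s=\dot\liap$ itself and let the cross-section be its zero set rather than the entry face $\Sigma_s$.
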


\begin{proof}
 Consider from Theorem \ref{teoCatFunIso}
 a catenary function 
 $\liap\colon N\to\R$, where $N$ is an isolating neighborhood of the fake singularity $p$. 
 For $\delta>0$ small define $\block =\{x\in N:\liap(x)\leq\delta\}$.
 Define $\Sigma=\{x\in \block:\dot\liap(x)=0\}$ with $p\in\Sigma$. 
 Define $\pi\colon \block\to\Sigma$ as 
 \[
  \pi(x)=\left\{
  \begin{array}{ll}
   \phi_t(x) & \hbox{ if }\phi_t(x)\in\Sigma \hbox{ and } \phi_{[0,t]}(x)\in N, \\
   p & \hbox{ in other case.}
  \end{array}
  \right.
 \]
 Define $h\colon \block\to\R\times\Sigma$ by 
 $h(x)=(\dot\liap(x),\pi(x))$. 
 Denote by $\psi$ the partial flow in $h(\block)$ 
 induced $\phi$ and $h$. 
It only rests to note that $\dot\pi_1=\ddot\liap=\liap$ (time derivative with 
respect to $\psi$). Then $\psi$ is a $(\liap,\Sigma)$-fake singularity and $h$ is a conjugacy with $\phi$.
\end{proof}

A product neighborhood $N$ as in Figure \ref{figEntornoAdaptado} is a \emph{singular flow box}.
Therefore, the previous result implies that every fake singularity is contained in a singular flow box. 

\begin{thm}
\label{teoSemiConj}
Every isolated set $\Lambda$ for a flow $\phi$ 
has an isolating neighborhood $N$ such that $\phi|N$ is semi-conjugate with a 
singular flow box around a fake singularity.
\end{thm}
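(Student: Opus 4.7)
The strategy is to adapt the argument from the previous proposition, where every fake singularity was realized as a $(W,\Sigma)$-fake singularity, to the situation where $\Lambda$ may contain more than one point. The loss of injectivity on $\Lambda$ is exactly what forces the result to be a semi-conjugacy rather than a conjugacy.

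First I would apply Theorem \ref{teoCatFunIso} to obtain a catenary function $\liap\colon N\to\R$ on an isolating neighborhood of $\Lambda$ and fix $\delta>0$ small enough that $\block=\{x\in N:\liap(x)\le\delta\}$ is still an isolating neighborhood of $\Lambda$ contained in the interior of $N$. Since $\ddot\liap=\liap>0$ off $\Lambda$, the function $\liap$ is strictly convex along each $\phi$-orbit meeting $\block$, so the cross section $\Sigma=\{x\in\block:\dot\liap(x)=0\}$ (which contains $\Lambda$) meets each such orbit at a unique point, namely the minimum of $\liap$, provided the orbit is not contained in $W^s(\Lambda)\cup W^u(\Lambda)$. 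Let $\Sigma^{\ast}=\Sigma/\Lambda$ be the compact metrizable quotient obtained by collapsing $\Lambda$ to a single point $x_0$, and define $\bar\pi\colon\block\to\Sigma^{\ast}$ by $\bar\pi(x)=[\pi(x)]$ when the orbit of $x$ meets $\Sigma\setminus\Lambda$ (with $\pi(x)$ that intersection point) and $\bar\pi(x)=x_0$ otherwise.

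Set $h(x)=(\dot\liap(x),\bar\pi(x))$. From $\ddot\liap=\liap$ one obtains the first integral $\frac{d}{dt}(\liap^2-\dot\liap^2)=0$, and evaluating $\liap^2-\dot\liap^2$ at the $\Sigma$-point of each orbit yields $\liap(x)^2=\dot\liap(x)^2+L(\bar\pi(x))^2$, where $L\colon\Sigma^{\ast}\to[0,\infty)$ is well defined by $L([y])=\liap(y)$. With $W(s,[y])=\sqrt{s^2+L([y])^2}$ one verifies the hypotheses of Proposition \ref{propFakeConst}: $W$ vanishes only at $p=(0,x_0)$, and $W(s,x_0)=|s|$ provides the required integrability at $p$. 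Hence $W$ defines a fake singularity flow $\psi$ on $\R\times\Sigma^{\ast}$. That $\bar\pi$ is constant along $\phi$-orbits, together with $\frac{d}{dt}\dot\liap(\phi_t x)=\liap(\phi_t x)=W(\dot\liap(\phi_t x),\bar\pi(x))$, shows that $h$ intertwines the two flows. Finally, shrinking to $N_0=h^{-1}([-\delta',\delta']\times U_0)\cap\block$ for appropriate $\delta'<\delta$ and a compact neighborhood $U_0$ of $x_0$ in $\Sigma^{\ast}$ yields an isolating neighborhood of $\Lambda$ that $h$ maps surjectively onto the product singular flow box $[-\delta',\delta']\times U_0$ of $\psi$.

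The main difficulty is the continuity of $h$ at points of $W^s(\Lambda)\cup W^u(\Lambda)\cup\Lambda$, since $\pi$ viewed as a map into $\Sigma$ itself is typically discontinuous there: if $x_n\to x_\infty\in W^s(\Lambda)\setminus W^u(\Lambda)$ with $x_n\notin W^s\cup W^u$, and one writes $\liap(\phi_t x_n)=A_n e^t+B_n e^{-t}$ on the orbit piece of $x_n$ in $\block$, then $A_n\to 0$ and $B_n\to B_\infty>0$, both nonnegative because $\liap\ge 0$ on $N$. Thus the minimum time $t_n^{\ast}=\tfrac12\log(B_n/A_n)\to+\infty$ and the minimum value $2\sqrt{A_nB_n}\to 0$, so $\liap(\pi(x_n))\to 0$. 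Since $\liap^{-1}(0)=\Lambda$ on the compact set $\block$, this forces $\pi(x_n)\to\Lambda$ and hence $\bar\pi(x_n)\to x_0$ in $\Sigma^{\ast}$; the symmetric argument handles $W^u(\Lambda)$ and the case $x_\infty\in\Lambda$. Collapsing $\Lambda$ to the basepoint $x_0$ is precisely what makes $h$ continuous, and is also exactly why only a semi-conjugacy (and not a conjugacy) can be expected when $\Lambda$ is not a singleton.
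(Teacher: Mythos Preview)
Your argument is correct and follows essentially the same route as the paper: take a catenary function $\liap$, use $\dot\liap$ as the ``horizontal'' coordinate, and collapse the asymptotic sets $W^s\cup W^u$ to a single orbit through the basepoint; the paper phrases this last step as the equivalence relation $x\sim y\Leftrightarrow x,y\in W^s\cup W^u$ and $\dot\liap(x)=\dot\liap(y)$, and simply asserts that the quotient is a fake singularity. Your map $h=(\dot\liap,\bar\pi)$ induces exactly that equivalence relation, so the two constructions coincide.

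Where your write-up goes further than the paper is in (i) exhibiting the target explicitly as a $(W,\Sigma^\ast)$-fake singularity via the first integral $\liap^2-\dot\liap^2$, which gives the concrete velocity $W(s,[y])=\sqrt{s^2+L([y])^2}$ and lets you invoke Proposition~\ref{propFakeConst}, and (ii) checking continuity of $h$ at $W^s\cup W^u$ through the $A_n,B_n$ analysis. One small remark: the justification ``both nonnegative because $\liap\ge 0$ on $N$'' is slightly glib, since the orbit of $x_n$ need not stay in $N$ for all time; the cleanest way to see $A_n,B_n\ge 0$ is either to observe that the orbit segment of $x_n$ inside $\block$ has an interior minimum of $\liap$ (forcing $A_n,B_n$ to have the same sign, hence positive), or to recall that in the proof of Theorem~\ref{teoCatFunIso} one has $\liap=\liap_\alpha+\liap_\omega$ with $A=\liap_\alpha\ge 0$ and $B=\liap_\omega\ge 0$ by construction.
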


\begin{proof}
Let $\liap\colon \block\to \R$ be a catenary function.
Define the sets 
\[
\begin{array}{c}
W^s=\{x\in \block:\phi_{\R^+}(x)\subset U\},\\
W^u=\{x\in \block:\phi_{\R^-}(x)\subset U\}.
\end{array}  
\]
Define an equivalence relation $\sim$ on $N$ generated by: 
$x\sim y$ if $x,y\in W^s\cup W^u$ and $\dot\liap(x)=\dot\liap(y)$. 
In the quotient, $\Lambda$ is a fake singularity and the projection is a semi-conjugacy.
\end{proof}

Figure \ref{figSemiConj} illustrates this result for a hyperbolic equilibrium point of saddle type in $\R^2$.

\begin{figure}[ht]
\center 
\includegraphics{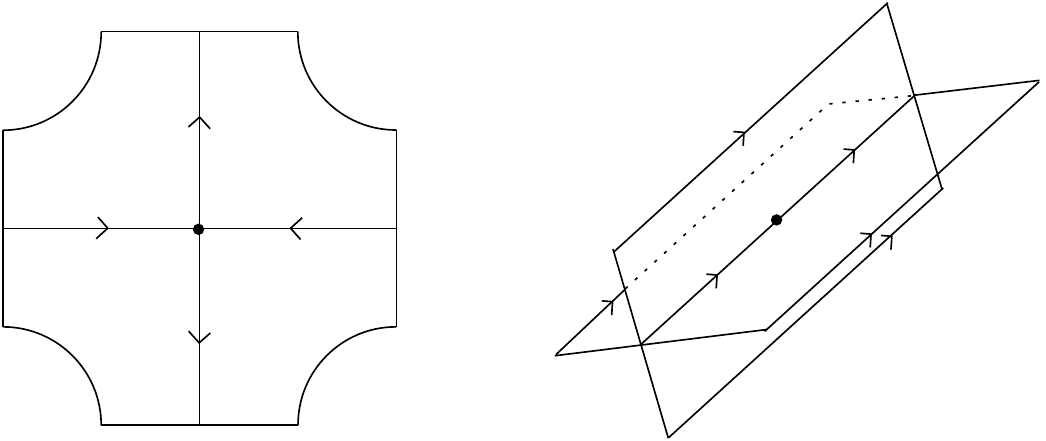}  
\caption{Left: a hyperbolic singular point. Right: its associated fake singularity by Theorem \ref{teoSemiConj}}
\label{figSemiConj}
\end{figure}

\section{Expansive flows} 
\label{secExpFlow}
The purpose of this section is to construct catenary functions and metrics for 
expansive flows on compact metric spaces. The first problem is to find an isolated set 
associated with the expansivity of a flow. 
Let $\phi\colon \R\times X\to X$ be a continuous flow on a compact metric space $(X,\dist)$. 

\begin{rmk}
In the case of expansive homeomorphisms we have that the diagonal is an isolated, but for flows this is not the case.
The diagonal $\{(x,x):x\in X\}$ is an isolated 
for the flow $\psi_t(x,y)=(\phi_t(x),\phi_t(y))$
if and only if $X$ is a finite set.
It is true, essentially, because 
$\psi_t(x,\phi_s(x))$ is close to the diagonal for all $t\in\R$ if $s$ is small.
\end{rmk}

According to \cite{BW} we say that $\phi$ is an \emph{expansive flow} 
if for all $\epsilon>0$ there is $\delta>0$ such that if $\dist(\phi_{h(t)}(x),\phi_t(y))<\delta$ for all 
$t\in\R$ with $h\colon\R\to\R$ a continuous function, $h(0)=0$, 
then there is $s\in (-\epsilon,\epsilon)$ such that 
$y=\phi_s(x)$. In order to find an isolated set, allowing us to apply our previous results, 
we will consider local cross sections.

\subsection{Local cross sections}
We start stating expansivity using cross sections.
In \cite{Whitney33} a local cross section is defined for each point. 
Since we need a family of such cross sections we sketch Whitney's construction.

Assume that the flow is regular, that is, it has no singular points. 
In this case it is easy to prove that 
for a flow $\phi$ on a compact metric space $X$
there are three positive parameters $\delta,\tau,a\in\R$ such that 
if $\dist(x,y)\leq\delta$ then 
$$\dist(x,\phi_\tau(y))-\dist(y,x)\geq a>0.$$ 
For $x,y\in X$ define $$\theta_x(y)=\int_0^\tau \dist(x,\phi_t(y))dt.$$
We have that $$\dot\theta_x(y)=\lim_{t\to 0}[\theta_x(\phi_t(y))-\theta_x(y)]t^{-1}=\dist(x,\phi_\tau(y))-\dist(x,y).$$
Therefore, if $\dist(x,y)\leq\delta$ then $\dot\theta_x(y)\geq a>0$. 
This implies that there is $\epsilon_0>0$ such that 
for all $\epsilon\in(0,\epsilon_0)$ the set 
$$H_\epsilon(x)=\{y\in X:\dist(x,y)\leq\epsilon,\theta_x(y)=\theta_x(x)\}$$
is a local cross section for each $x\in X$. 

\begin{rmk}
 We can consider the function $H_\epsilon\colon X\to\K(X)$ for $\epsilon$ fixed. 
 In this way we have that $H_\epsilon$ is semi-continuous, that is, 
 if $y_n\in H_\epsilon(x_n)$ with $x_n\to x$ and $y_n\to y$ then $y\in H_\epsilon(x)$. 
\end{rmk}

\begin{rmk}
 We can state expansivity as follows. 
 A flow $\phi$ is expansive if and only if there is $\epsilon>0$ such that 
 if for some continuous $h\colon\R\to\R$ and $x,y\in X$ we have that 
 $\phi_{h(t)}(y)\in H_\epsilon(\phi_t(x))$ for all $t\in\R$ 
 then $x=\phi_{h(0)}(y)$.
\end{rmk}

Define $$N_\epsilon=\{(x,y):y\in H_\epsilon(x)\}.$$ 
In $N_\epsilon$ we define a partial flow $\psi$ by 
\begin{equation}
  \label{ecuFlujoProd}
  \psi_t(x,y)=(\phi_t(x),\phi_{h(t)}(y))
\end{equation}
if $h\colon[0,t]\to\R$ is a continuous function such that 
$h(0)=0$ and $\phi_{h(s)}(y)\in H_\epsilon(\phi_s(x))$ for all $s\in[0,t]$. 
Notice that this function $h$ is unique if it exists. 
If $\phi$ is expansive then the trajectories of $\psi$ are not defined for all $t\in\R$. 
In fact, $\psi_t(x,y)$ is defined for all $t\in\R$ if and only if $y=x$, assuming expansivity. 
This is another equivalent way to state the expansivity of $\phi$. 
In other words: 
\begin{rmk}
\label{rmkIsoExpFlow}
The flow $\phi$ is expansive if and only if $\Lambda=\{(x,x):x\in X\}$ 
is an isolated set for the partial flow $\psi$ on $N_\epsilon$.   
\end{rmk}

Having found our isolated set associated to an expansive flow we are ready to apply our previous results.

\subsection{Catenary functions for expansive flows}
Given a flow $\phi$ on the compact metric space $X$ we consider the partial flow $\psi$ defined 
in (\ref{ecuFlujoProd}).
In this section the dots, indicating time derivatives, 
are related with $\psi$.

\begin{thm}
 If $\phi$ is an expansive flow without singular points on a compact metric space $X$ 
 then there is a continuous function 
 $\liap\colon N_\epsilon\to\R$ such that 
 $\liap(x,y)\geq 0$ with equality if and only if $x=y$
 and $\ddot \liap=\liap$.
\end{thm}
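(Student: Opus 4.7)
The plan is to collect the ingredients already assembled in the paper and observe that essentially no new work is needed: the result is a direct corollary of Theorem \ref{teoCatFunIso} applied to the partial flow $\psi$ on $N_\epsilon$.

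First I would recall from Remark \ref{rmkIsoExpFlow} that the expansivity of $\phi$ is exactly the statement that the diagonal $\Lambda=\{(x,x):x\in X\}$ is an isolated set for the partial flow $\psi$ defined by (\ref{ecuFlujoProd}). So the hypothesis on $\phi$ immediately puts us in the setting of the previous sections: we have a partial flow on a metric space with a distinguished isolated compact invariant set, namely the diagonal.

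Next I would check that, after possibly shrinking $\epsilon$, the set $N_\epsilon$ itself can be taken as an isolating neighborhood of $\Lambda$. Because the local cross sections $H_\epsilon$ are compact and semi-continuous in $x$, the set $N_\epsilon$ is a closed neighborhood of $\Lambda$ in $X\times X$; compactness of $X$ gives compactness of $N_\epsilon$. Expansivity (together with the observation that $\psi_t(x,y)$ is defined for all $t\in\R$ only when $x=y$) shows that any point in $N_\epsilon$ whose full $\psi$-orbit stays in $N_\epsilon$ lies on $\Lambda$, which is precisely the isolating condition. So $N_\epsilon$ plays the role of the compact isolating neighborhood $N$ in our setup.

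With $N_\epsilon$ fixed as an isolating neighborhood of $\Lambda$ for $\psi$, I would apply Theorem \ref{teoCatFunIso} directly to obtain a catenary function $\liap\colon N_\epsilon\to\R$ satisfying $\liap(\Lambda)=0$, $\liap>0$ on $N_\epsilon\setminus\Lambda$, and $\ddot\liap=\liap$ (the time derivatives being those of $\psi$, as in the statement we are proving). Since $\Lambda$ consists of the diagonal pairs, the conditions $\liap(x,y)\geq 0$ with equality iff $x=y$ are equivalent to $\liap\geq 0$ with equality only on $\Lambda$, and this finishes the proof. If Theorem \ref{teoCatFunIso} only produces a catenary function on a potentially smaller isolating neighborhood, one simply re-labels by choosing a smaller $\epsilon'$ such that $N_{\epsilon'}$ lies in that neighborhood, which is harmless because the statement lets us pick $\epsilon$.

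The only real obstacle is bookkeeping: one must verify that the partial flow $\psi$ on $N_\epsilon$ fits the framework of Section \ref{secIsolated} (in particular, that it is the restriction of the partial flow obtained from the product flow on $X\times X$ via the cross section mechanism, so that Theorem \ref{teoEnvMet} and the construction of the isolating universe go through). All of this is immediate from the definitions once one observes that $\psi$ has a continuous extension inside a slightly larger $N_{\epsilon_0}$, which provides the ambient space in which $N_\epsilon$ sits as a compact neighborhood.
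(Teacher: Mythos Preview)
Your proposal is correct and follows exactly the approach of the paper: the paper's own proof is the single sentence ``It is a direct consequence of Theorem \ref{teoCatFunIso} and Remark \ref{rmkIsoExpFlow},'' and you have simply unpacked that sentence with the necessary bookkeeping. Your additional checks (compactness of $N_\epsilon$, possibly shrinking $\epsilon$, and that $\psi$ fits the framework of Section \ref{secIsolated}) are not in the paper's proof but are reasonable to spell out and do not constitute a different route.
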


\begin{proof}
  It is a direct consequence of Theorem \ref{teoCatFunIso} and 
  Remark \ref{rmkIsoExpFlow}.
\end{proof}

\begin{df}
 A \emph{sectional metric} is a continuous function $$D\colon\{(x,y,z)\in X^3:y,z\in H_\epsilon(x)\}\to\R,$$
 that will be denoted as $D_x(y,z)=D(x,y,z)$,
 such that $D_x\colon H_\epsilon(x)\times H_\epsilon(x)\to\R$ is a metric 
 for each $x\in X$.
\end{df}

\begin{thm}
\label{teoCatMetExpFLow}
 Every expansive flow without singularities on a compact metric space $X$ admits 
 a sectional metric $D$ satisfying $\ddot D_x=D_x$ for all $x\in X$.
\end{thm}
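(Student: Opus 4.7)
The plan is to reduce the statement to Theorem~\ref{teoCatMetSet} via Remark~\ref{rmkIsoExpFlow}, which identifies the diagonal $\Lambda = \{(x,x) : x \in X\}$ as an isolated set for the partial flow $\psi$ of equation~(\ref{ecuFlujoProd}) on $N_\epsilon$. Applying that theorem would yield an isolating neighborhood $N \subset N_\epsilon$ of $\Lambda$ together with a continuous catenary pseudo-metric $\di : N \times N \to \R$ satisfying $\ddot\di = \di$ along $\psi$-orbits and $\di(p,q) = 0$ if and only if $p = q$ or $p, q \in \Lambda$. Since $H_\epsilon(x)$ shrinks to $\{x\}$ as $\epsilon \to 0$ uniformly in $x$ (by compactness of $X$), after replacing $\epsilon$ by a smaller value I may assume $N_\epsilon \subset N$, so that $\di((x,y), (x,z))$ is defined whenever $y, z \in H_\epsilon(x)$.

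The candidate sectional metric is then
\[
  D_x(y,z) := \di((x,y), (x,z)), \qquad y, z \in H_\epsilon(x).
\]
I would first verify that $D$ is a sectional metric in the sense of the definition. Symmetry, nonnegativity, and the triangle inequality are immediate from the corresponding properties of the pseudo-metric $\di$. For positivity, suppose $D_x(y,z) = 0$; then either $(x,y) = (x,z)$, forcing $y = z$, or both $(x,y), (x,z)$ lie in $\Lambda$, in which case $y = x = z$. Joint continuity of the map $(x,y,z) \mapsto D_x(y,z)$ on $\{(x,y,z) : y,z \in H_\epsilon(x)\}$ follows from continuity of $\di$ together with the construction of $N_\epsilon$ from the local cross sections in Section~\ref{secExpFlow}.

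Finally, the catenary identity transfers by restriction. Since the partial flow acts by $\psi_t(x,y) = (\phi_t(x), \phi_{h_y(t)}(y))$ and $\psi_t(x,z) = (\phi_t(x), \phi_{h_z(t)}(z))$ for the reparametrizations $h_y, h_z$ associated to each orbit, one has
\[
  \di(\psi_t(x,y), \psi_t(x,z)) = D_{\phi_t(x)}(\phi_{h_y(t)}(y), \phi_{h_z(t)}(z)),
\]
and differentiating this equality twice at $t=0$ turns $\ddot\di = \di$ into $\ddot D_x = D_x$. The main conceptual subtlety, which I would emphasize, is that the dots on $D_x$ must be interpreted through the partial flow $\psi$ rather than naively through $\phi$, because the two trajectories starting from a shared cross section are reparametrized by possibly different functions $h_y, h_z$; once this convention is made explicit, the identity is a direct restriction of the already-established $\ddot\di = \di$.
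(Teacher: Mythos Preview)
Your proof is correct and follows essentially the same route as the paper: invoke Remark~\ref{rmkIsoExpFlow} to see the diagonal as an isolated set for $\psi$, pull a catenary pseudo-metric $\di$ from Theorem~\ref{teoCatMetSet}, set $D_x(y,z)=\di((x,y),(x,z))$, and check the metric axioms and the catenary identity. If anything you are more careful than the paper, which simply asserts that $\di$ is defined on $N_\epsilon\times N_\epsilon$ without the explicit $\epsilon$-shrinking step, and you rightly flag that the dots on $D_x$ are to be read through $\psi$.
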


\begin{proof}
 We know that the diagonal $\Lambda$ is an isolated set for $\psi$. 
 Then, there is a catenary pseudo-metric 
 $\di\colon N_\epsilon\times N_\epsilon\to\R$ given by Theorem \ref{teoCatMetSet}. 
 Define $$D_x(y,z)=\di((x,y),(x,z)).$$ 
 Since $\ddot \di=\di$ we have that $\ddot D_x=D_x$ for all $x\in X$. 
 We know that $\di((x_1,y_1),(x_2,y_2))=0$ if and only if 
 $(x_1,y_1)=(x_2,y_2)$ or $(x_1,y_1),(x_2,y_2)\in\Lambda$ (recall Definition \ref{dfCatPsMet}). 
 Then, $D_x(y,z)=0$ implies $y=z$. 
 We have that $D_x(y,z)=D_x(z,y)$, $D_x(y,y)=0$ and $D_x(y,z)\geq 0$ by the corresponding properties of 
 $\di$. 
 Let us prove the triangular inequality: 
 \[
   D_x(a,b)+D_x(b,c)=
   \di((x,a),(x,b))+\di((x,b),(x,c))
   \geq \di((x,a),(x,c))
   =D_x(a,c).
 \]
Finally, the continuity of $D$ follows by the continuity of $\di$.
\end{proof}

\section{Discrete dynamical systems}
\label{secLyapHomeo}

In this section we extend our results to the dynamics of homeomorphisms.
Applications to expansive and cw-expansive homeomorphisms are given. 

\subsection{Isolated sets for homeomorphisms}
Let $f\colon X\to X$ be a homeomorphism of a metric space $(X,\dist)$. 
An $f$ invariant set $\Lambda$ is \emph{isolated} if there 
is a compact neighborhood $N$ of $\Lambda$ such that
$f^n(x)\in N$ for all $n\in\Z$ implies that $x\in \Lambda$. 

\subsection{Suspension flow} 
Fix $\nu>0$.
Consider $\susp X=\R\times X/\sim$ where 
$(s,x)\sim (t,y)$ if and only if 
$\nu(t-s)\in\Z$ and $y=f^{\nu(t-s)}(x)$.
Denote by $\pi(s,x)$
the equivalence class of $(s,x)$. 
Let $\phi\colon \R\times \susp X\to \susp X$ 
be the \emph{suspension} flow defined by $\phi_t\pi(s,x)=\pi(s+t,x)$.

\begin{prop}
The set $\susp\Lambda=\pi(\R\times\Lambda)$ is isolated 
for $\phi$ if $\Lambda$ is an isolated set for $f$.
\end{prop}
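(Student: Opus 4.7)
The plan is to exhibit an explicit compact isolating neighborhood of $\susp\Lambda$ in $\susp X$, built from an isolating neighborhood $N$ of $\Lambda$ for $f$, and then to translate the condition ``the $\phi$-orbit of $z$ stays in $\tilde N$'' back into ``every integer iterate of a representative of $z$ stays in $N$''.

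First I would pick a compact isolating neighborhood $N$ of $\Lambda$ for $f$, shrunk if necessary so that $\Lambda\subset\mathrm{int}(N)$. Because $\Lambda$ is $f$-invariant and $f$ is a homeomorphism, this also gives $\Lambda\subset\mathrm{int}(f^k(N))$ for every $k\in\Z$. Then set
\[
 \tilde N \;=\; \pi\bigl([0,1/\nu]\times N\bigr)\subset\susp X.
\]
It is compact as a continuous image of a compact set. A direct computation using $(s,x)\sim(s+k/\nu,f^k(x))$ yields
\[
 \pi^{-1}(\tilde N)\;=\;\bigcup_{k\in\Z}\bigl[k/\nu,(k+1)/\nu\bigr]\times f^k(N),
\]
which is a neighborhood of $\R\times\Lambda$ in $\R\times X$. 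Hence $\tilde N$ is a neighborhood of $\susp\Lambda$ in $\susp X$.

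The heart of the proof is the isolating property. Suppose $z\in\susp X$ satisfies $\phi_t(z)\in\tilde N$ for every $t\in\R$, and write $z=\pi(s_0,x_0)$. The containment $\phi_t(z)=\pi(s_0+t,x_0)\in\tilde N$ means $(s_0+t,x_0)\in\pi^{-1}(\tilde N)$, i.e.\ there exists $k=k(t)\in\Z$ with $s_0+t\in[k/\nu,(k+1)/\nu]$ and $f^{-k}(x_0)\in N$. For $t$ in the open subinterval $\bigl(k/\nu-s_0,\;(k+1)/\nu-s_0\bigr)$ this integer $k$ is the unique one compatible with the range condition, so the membership $f^{-k}(x_0)\in N$ is forced. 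As $k$ runs through $\Z$ these open subintervals cover $\R$, so $f^n(x_0)\in N$ for every $n\in\Z$. Since $N$ is $f$-isolating, $x_0\in\Lambda$, whence $z=\pi(s_0,x_0)\in\pi(\R\times\Lambda)=\susp\Lambda$.

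The only delicate point, and therefore the main thing to handle carefully, is the quotient bookkeeping: a single point of $\susp X$ has infinitely many $(s,x)$-representatives, so one has to verify that the orbit condition at each $t$ pins down a \emph{definite} integer iterate of $x_0$ that must belong to $N$. Restricting to the open subintervals above avoids the ambiguity occurring at the endpoints $s_0+t\in\{k/\nu\}$, which is harmless because these endpoints form a countable set while every single integer $n$ is already forced by an entire open subinterval's worth of values of $t$.
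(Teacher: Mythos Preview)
Your argument is correct and follows essentially the same strategy as the paper: build a compact neighborhood of $\susp\Lambda$ as the $\pi$-image of an interval of length at least one period times $N$, and then translate the orbit condition back to $f^n(x_0)\in N$ for all $n$. The only cosmetic differences are that the paper uses the interval $[-3\nu/4,3\nu/4]$ and checks the condition at a discrete sequence of times, whereas you use $[0,1/\nu]$ and compute the full saturated preimage; your handling of the endpoint ambiguity is a bit more explicit, and the sentence ``these open subintervals cover $\R$'' is literally false but immaterial (and you correctly clarify this), since what you actually use is that each integer $k$ has its own nonempty open subinterval forcing $f^{-k}(x_0)\in N$.
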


\begin{proof}
If $N$ is an isolating neighborhood of $\Lambda$ define 
$\susp N=\pi([-3\nu/4,3\nu/4]\times N)$. 
Assume that $N$ is compact. 
Since $\pi$ is continuous we have that $\susp N$ is compact.
Let us show that $\susp N$ isolates $\susp\Lambda$.
Suppose that $\phi_t\pi(s,x)\in \susp N$ for all $t\in\R$. 
This means that $\pi(s+t,x)\in \susp N$ for all $t\in\R$. 
Define $t_n=n-s$ for each $n\in\Z$. 
Then $(n,x)$ is equivalent with some $(u_n,y_n)\in [-3\nu/4,3\nu/4]\times N$ 
for each $n\in\Z$. 
Since $n\in\Z$ and $n-u_n\in\Z$ we have that $u_n\in\Z\cap[-3\nu/4,3\nu/4]$. 
Thus $u_n=0$ for all $n\in\Z$ and $y_n=f^n(x)\in \Lambda$. 
This implies that $x\in N_f$ and $\pi(s,x)\in\susp\Lambda$. 
It only rests to note that $\susp N$ is a neighborhood of $\susp\Lambda$.
\end{proof}

\subsection{Catenary functions}
Given $\liap\colon N\to \R$ define 
\[
\begin{array}{ll}
  \dder \liap(x) & = [\liap(f(x))-\liap(x)] - [\liap(x)-\liap(f^{-1}(x))]\\
	     & = \liap(f(x))-2\liap(x)+\liap(f^{-1}(x))\\
\end{array}
\]
if $f(x),x,f^{-1}(x)\in N$.
\begin{df}[Discrete catenary]
A \emph{catenary function} for an isolated set $\Lambda$ 
is a continuous function $\liap\colon N\to\R$ 
such that $\dder \liap(x)=\liap(x)$, $\liap$ vanishes on $\Lambda$ and is positive in $N\setminus \Lambda$. 
\end{df}

\begin{thm}
\label{teoLyapDisc}
  Every isolated set of a homeomorphism admits a catenary function.
\end{thm}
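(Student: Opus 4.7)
The plan is to reduce the discrete problem to the continuous one via the suspension flow. By the preceding proposition, $\susp\Lambda$ is an isolated set of the suspension flow $\phi$ on $\susp X$, so Theorem \ref{teoCatFunIso} supplies a (continuous-time) catenary function $L\colon \susp N\to\R$ on an isolating neighborhood $\susp N$ of $\susp\Lambda$, satisfying $\ddot L=L$, vanishing on $\susp\Lambda$ and positive on $\susp N\setminus\susp\Lambda$. I then define $\liap(x):=L(\pi(0,x))$ and tune the suspension period $\nu$ so that the continuous catenary equation for $L$ forces the discrete catenary equation for $\liap$.

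The tuning comes from the introduction. A function satisfying $\ddot L=L$ along the flow takes the form $L(\phi_t(p))=Ae^t+Be^{-t}$. From the definition of $\sim$, one checks that $\pi(0,f(x))=\pi(-1/\nu,x)=\phi_{-1/\nu}(\pi(0,x))$ and $\pi(0,f^{-1}(x))=\phi_{1/\nu}(\pi(0,x))$. Therefore
\[
\liap(f(x))+\liap(f^{-1}(x))=L(\phi_{-1/\nu}\pi(0,x))+L(\phi_{1/\nu}\pi(0,x))=2\cosh(1/\nu)\,\liap(x).
\]
Choosing $\nu>0$ with $2\cosh(1/\nu)=3$, equivalently $e^{1/\nu}=\lambda_u=(3+\sqrt 5)/2$ (the positive root of $\lambda^2-3\lambda+1=0$ from equation (\ref{eqPolCat})), this becomes $\liap(f(x))+\liap(f^{-1}(x))=3\liap(x)$, whence $\dder\liap(x)=\liap(f(x))-2\liap(x)+\liap(f^{-1}(x))=\liap(x)$ as required.

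It remains to choose the isolating neighborhood $N$ of $\Lambda$ in $X$ correctly. The preceding proposition already produces an isolating neighborhood of $\susp\Lambda$ of the form $\pi([-3\nu/4,3\nu/4]\times N_0)$ with $N_0$ an isolating neighborhood of $\Lambda$, and I may assume $\susp N\subset \pi([-3\nu/4,3\nu/4]\times N_0)$. I shrink $N_0$ to a compact neighborhood $N$ of $\Lambda$ such that $N\cup f(N)\cup f^{-1}(N)\subset N_0$; this is possible by continuity of $f^{\pm 1}$ and compactness of $\Lambda$. Then $\pi(0,x)$, $\pi(0,f(x))$ and $\pi(0,f^{-1}(x))$ all lie in $\susp N$ for $x\in N$, so the previous computation is valid pointwise on $N$. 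Continuity of $\liap$ is inherited from $L$ and $\pi$. Vanishing on $\Lambda$ is immediate since $\pi(\{0\}\times\Lambda)\subset\susp\Lambda$, and strict positivity on $N\setminus\Lambda$ follows from the injectivity of $x\mapsto\pi(0,x)$ together with $\pi(0,x)\in\susp\Lambda\iff x\in\Lambda$, combined with the positivity of $L$ off $\susp\Lambda$.

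The construction is essentially routine once one spots the right value of $\nu$, and the only subtle point is precisely this coupling between the continuous equation $\ddot L=L$ and the discrete equation $\dder\liap=\liap$ through the hyperbolic identity $e^{1/\nu}+e^{-1/\nu}=3$. The bookkeeping of nested neighborhoods so that $f^{\pm 1}$ do not push points out of the domain is a minor but genuine check; aside from that, no further obstruction is anticipated.
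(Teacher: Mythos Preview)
Your proof is essentially the paper's: suspend with $\nu=1/\log\lambda_u$ where $\lambda_u=(3+\sqrt5)/2$, pull back the continuous-time catenary function of Theorem~\ref{teoCatFunIso} to the zero section, and use $\lambda_u+\lambda_u^{-1}=3$ to convert $\ddot L=L$ into $\dder\liap=\liap$. Your sign in $\pi(0,f(x))=\phi_{-1/\nu}(\pi(0,x))$ is actually the correct one under the paper's suspension convention (the paper writes $\phi_{nT}$, a harmless slip since the computation is symmetric in $T\leftrightarrow -T$).

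One small bookkeeping point: your shrinking $N\cup f(N)\cup f^{-1}(N)\subset N_0$ only places the three sample points in the larger product neighborhood $\pi([-3\nu/4,3\nu/4]\times N_0)$, not in $\susp N$ itself, and in any case the identity $L(\phi_{-1/\nu}p)+L(\phi_{1/\nu}p)=2\cosh(1/\nu)L(p)$ requires the \emph{entire} orbit segment $\phi_{[-1/\nu,1/\nu]}(\pi(0,x))$ to lie in the domain of $L$, not just its endpoints. This is easily fixed: since $\susp\Lambda$ is $\phi$-invariant and contained in the interior of $\susp N$, compactness of $[-1/\nu,1/\nu]\times\Lambda$ gives a neighborhood $N$ of $\Lambda$ with $\phi_{[-1/\nu,1/\nu]}(\pi(\{0\}\times N))\subset\susp N$. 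The paper does not address this point either.
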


\begin{proof}
Let $\lambda=\frac12(3+\sqrt 5)$, $T=\log\lambda$ and $\nu=T^{-1}$. 
Recall that $\nu$ is the parameter used to defined the suspension flow.
By Theorem \ref{teoCatFunIso} we have a catenary function $\susp\liap\colon \susp N\to\R$ 
for the suspension flow. 
For $x\in N$ define 
$\liap(x)=\susp\liap(\pi(0,x))$.
Since $\susp{\ddot\liap}=\susp\liap$, we have that for each 
$x\in N$ it holds that $\susp\liap(\phi_t(\pi(0,x)))=Ae^t+Be^{-t}$ for suitable constants 
$A,B\in\R$ depending on $x$. 
Notice that $\pi(0,f^{n}(x))=\phi_{nT}(\pi(0,x))$ for all $n\in\Z$.
Then 
\[
\begin{array}{ll}
\dder \liap(x) & = \liap(f(x))-2\liap(x)+\liap(f^{-1}(x))\\
	   & = \susp\liap(\pi(0,f(x)))-2\susp\liap(\pi(0,x))+\susp\liap(\pi(0,f^{-1}(x)))\\
	   & = \susp\liap(\phi_T(\pi(0,x)))-2\susp\liap(\pi(0,x))+\susp\liap(\phi_{-T}(\pi(0,x)))\\
	   & = [Ae^T+Be^{-T}]-2[A+B]+[Ae^{-T}+Be^{T}]\\
	   & = (A+B)(e^T-2+e^{-T})\\
	   & = (A+B)(\lambda-2+\lambda^{-1})\\
	   & = A+B = \susp\liap(\pi(0,x))=\liap(x).
\end{array}
\]
It only rests to note that $\liap$ is positive in $N\setminus\Lambda$, $\liap(\Lambda)=0$ and 
$\liap$ is continuous by construction.
\end{proof}

\subsection{Catenary metric for an isolated set of a homeomorphism}

\begin{df}
 A \emph{catenary pseudo-metric} for an isolated set $\Lambda$ is a pseudo-metric $\di\colon N\times N\to\R$, with $N$ 
 an isolating neighborhood of $\Lambda$,
 such that $\ddot\di=\di$ and 
 $\di(x,y)=0$ if and only if $x=y$ or $x,y\in\Lambda$.
\end{df}

\begin{thm}
\label{teoCatMet}
Every isolated set admits a catenary pseudo-metric.
\end{thm}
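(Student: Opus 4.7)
The plan is to reduce to the continuous-time case via suspension, exactly as in the proof of Theorem \ref{teoLyapDisc}. Fix the root $\lambda=\tfrac12(3+\sqrt 5)$ of the characteristic equation \eqref{eqPolCat}, set $T=\log\lambda$ and $\nu=1/T$, and form the suspension flow $\phi$ on $\susp X$ with this parameter. Then $\susp\Lambda=\pi(\R\times\Lambda)$ is an isolated set of $\phi$, so Theorem \ref{teoCatMetSet} supplies a continuous catenary pseudo-metric $\susp\di$ on some isolating neighbourhood $\tilde N\subset\susp X$ of $\susp\Lambda$, vanishing only on the diagonal and on pairs both of which lie in $\susp\Lambda$.

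By continuity of $x\mapsto\pi(0,x)$ I shrink to a compact isolating neighbourhood $N$ of $\Lambda$ in $X$ such that $\pi(0,f^n(x))\in\tilde N$ for $n\in\{-1,0,1\}$ and every $x\in N$, and then define
\[
  \di(x,y)=\susp\di(\pi(0,x),\pi(0,y)).
\]
Symmetry, the triangle inequality, non-negativity, and continuity of $\di$ pass immediately from the corresponding properties of $\susp\di$. Moreover, $\di(x,y)=0$ implies $\susp\di(\pi(0,x),\pi(0,y))=0$: either $\pi(0,x)=\pi(0,y)$, which by the relation $\sim$ on $\R\times X$ forces $x=y$, or $\pi(0,x),\pi(0,y)\in\susp\Lambda$, which by $f$-invariance of $\Lambda$ forces $x,y\in\Lambda$.

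To verify $\dder\di=\di$, I invoke the decomposition built in the proof of Theorem \ref{teoCatMetSet}: $\susp\di=\susp\di_\alpha+\susp\di_\omega$ with $\dot{\susp\di_\alpha}=\susp\di_\alpha$ and $\dot{\susp\di_\omega}=-\susp\di_\omega$, so that along a pair orbit
\[
  \susp\di(\phi_t a,\phi_t b)=A e^t+B e^{-t},\qquad A=\susp\di_\alpha(a,b),\ B=\susp\di_\omega(a,b).
\]
Applying this with $a=\pi(0,x)$, $b=\pi(0,y)$ and $t\in\{-T,0,T\}$, using $\pi(0,f^n(z))=\phi_{nT}(\pi(0,z))$, gives
\[
  \dder\di(x,y)=(A+B)(\lambda+\lambda^{-1}-2)=A+B=\di(x,y),
\]
because $\lambda^2-3\lambda+1=0$. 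The only subtlety, as in Theorem \ref{teoLyapDisc}, is a bookkeeping one: the exponential formula for $\susp\di$ along a pair orbit must remain valid on a time interval containing $\{-T,0,T\}$, which is precisely what the shrinking of $N$ above guarantees. Beyond that, the heart of the argument is the algebraic identity $\lambda+\lambda^{-1}-2=1$ forced by the calibration of $\nu$.
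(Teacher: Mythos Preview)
Your argument is correct and follows exactly the route the paper intends: suspend with the calibrated parameter $\nu=1/\log\lambda$, pull back the continuous-time catenary pseudo-metric of Theorem \ref{teoCatMetSet} along $x\mapsto\pi(0,x)$, and reduce $\dder\di=\di$ to the identity $\lambda+\lambda^{-1}-2=1$ just as in Theorem \ref{teoLyapDisc}. The paper's own proof is a one-line pointer to precisely this procedure, and you have filled in the details faithfully, including the appeal to the decomposition $\susp\di=\susp\di_\alpha+\susp\di_\omega$ to get the exact exponential form along pair orbits.

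One small imprecision: the shrinking you state only guarantees $\phi_{nT}(\pi(0,x))\in\tilde N$ for $n\in\{-1,0,1\}$, whereas the ODE argument for $\susp\di(\phi_t a,\phi_t b)=Ae^t+Be^{-t}$ needs $\phi_t(\pi(0,x)),\phi_t(\pi(0,y))\in\tilde N$ for \emph{all} $t\in[-T,T]$. This is easily arranged by choosing $N$ so that $\pi([-T,T]\times N)\subset\tilde N$, which is possible by compactness since $\pi([-T,T]\times\Lambda)\subset\susp\Lambda\subset\mathrm{int}\,\tilde N$. You clearly recognise the issue in your final paragraph; just phrase the shrinking accordingly.
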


\begin{proof}
 It follows with the techniques of the proof of Theorem \ref{teoLyapDisc}, considering the suspension, and
 applying Theorem \ref{teoCatMetSet}.
\end{proof}

\subsection{Expansive homeomorphisms}
A homeomorphism $f\colon X\to X$ of a compact metric space $(X,\dist)$ is \emph{expansive} 
if there is $\delta>0$ such that 
if $\dist(f^n(x),f^n(y))\leq\delta$ for all $n\in \Z$ then $x=y$. 
In this section we will prove that every expansive homeomorphism admits a Lyapunov function of catenary type defined 
for small compact subsets of $X$.
We also construct a catenary local metric for an arbitrary expansive homeomorphism of a compact metric space. 
Assuming that this catenary local metric is locally minimizing we obtain a catenary metric for the expansive homeomorphism.

\subsubsection{Catenary Lyapunov functions for expansive homeomorphisms}

Define $$\K_\delta(X)=\{A\in \K(X):\diam(A)\leq\delta\}.$$

\begin{thm}
\label{catExp}  
For every expansive homeomorphism $f\colon X\to X$ of a compact metric space 
there are $\delta>0$ and a continuous function 
$\liap\colon \K_\delta(X)\to\R$ such that: 
\begin{enumerate}
  \item $\liap(A)\geq 0$ for all $A\in \K_\delta(X)$, with equality if and only if $A$ is a singleton,
  \item $\dder \liap=\liap$.
\end{enumerate}
\end{thm}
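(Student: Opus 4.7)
The plan is to transfer the problem to the hyperspace $\K(X)$ and apply Theorem \ref{teoLyapDisc}. Equip $\K(X)$ with the Hausdorff metric $\dist_H$, making it a compact metric space since $X$ is compact, and let $\tilde f\colon\K(X)\to\K(X)$ be the induced homeomorphism $\tilde f(A)=f(A)$ (its inverse is induced by $f^{-1}$). The theorem will follow by producing a catenary function on a suitable neighborhood of the set of singletons inside $\K(X)$ and then restricting it.

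The first step would be to identify the appropriate isolated set. Let $\Lambda=\{\{x\}:x\in X\}\subset\K(X)$ be the image of $X$ under the canonical embedding $x\mapsto\{x\}$. This set is compact, homeomorphic to $X$, and clearly $\tilde f$-invariant. I would claim that $\Lambda$ is isolated for $\tilde f$ with isolating neighborhood $\K_\delta(X)$, where $\delta$ is an expansivity constant of $f$. The set $\K_\delta(X)$ is a neighborhood of $\Lambda$ because $\diam\colon\K(X)\to\R$ is continuous and vanishes on $\Lambda$, and it is compact as a closed subset of the compact space $\K(X)$. If $A\in\K_\delta(X)$ satisfies $\tilde f^n(A)\in\K_\delta(X)$ for every $n\in\Z$, then for all $x,y\in A$ we have $\dist(f^n(x),f^n(y))\leq\diam(f^n(A))\leq\delta$ for all $n$, so expansivity forces $x=y$, and hence $A$ is a singleton; this is precisely the isolating condition.

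Having this, I would invoke Theorem \ref{teoLyapDisc} applied to $\tilde f$ and the isolated set $\Lambda$, obtaining a catenary function $\liap$ on some isolating neighborhood $N\subset\K(X)$ of $\Lambda$. By construction $\liap$ vanishes exactly on $\Lambda$ (i.e. on singletons), is strictly positive on $N\setminus\Lambda$, and satisfies $\dder\liap=\liap$ wherever the three terms are defined. It only remains to arrange $\K_\delta(X)\subset N$ after perhaps shrinking $\delta$. This uses the fact that the family $\{\K_\delta(X)\}_{\delta>0}$ is a fundamental system of neighborhoods of $\Lambda$ in $\K(X)$: if no $\K_{\delta_n}(X)$ with $\delta_n\to 0$ sat inside $N$, pick $A_n\in\K_{\delta_n}(X)\setminus N$ and extract a Hausdorff-convergent subsequence; the limit has diameter $0$, so it lies in $\Lambda$, contradicting that $N$ is a neighborhood of $\Lambda$. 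After such a shrinking, the restriction of $\liap$ to $\K_\delta(X)$ enjoys all the required properties.

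The only step of substance is the verification that $\Lambda$ is isolated for $\tilde f$, which is essentially a reformulation of expansivity in terms of the hyperspace dynamics; once this is in place, the theorem is a direct application of the discrete catenary function construction developed earlier, together with a routine compactness argument on $\K(X)$ to convert an abstract isolating neighborhood into one of the prescribed form $\K_\delta(X)$.
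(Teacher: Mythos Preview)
Your proposal is correct and follows essentially the same route as the paper: pass to the hyperspace $\K(X)$ with the induced homeomorphism, observe that expansivity of $f$ is exactly the statement that the set of singletons $\Lambda$ is isolated with isolating neighborhood $\K_\delta(X)$, and then apply Theorem \ref{teoLyapDisc}. The paper's own proof is a two-line version of what you wrote; you have simply spelled out the verification that $\Lambda$ is isolated and the compactness argument for shrinking $\delta$ so that $\K_\delta(X)$ sits inside the abstract isolating neighborhood produced by Theorem \ref{teoLyapDisc}.
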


\begin{proof}
If we define $Y=\K(X)$ and 
$g\colon Y\to Y$ as $g(A)=\{f(x):x\in A\}$ we 
have that $f$ is expansive if and only if $\Lambda=\{\{x\}:x\in X\}$ 
is an isolated set of $g$. 
Take $\liap$ a catenary function from Theorem \ref{teoLyapDisc} defined in a neighborhood $\K_\delta(X)$ 
of $\Lambda$ for $\delta>0$ small.
\end{proof}

Define $X^2_\delta=\{(x,y)\in X\times X:\dist(x,y)\leq\delta\}$.
The following result extends the onw obtained by Lewowicz in \cite{Lew89}.

\begin{coro}
  For every expansive homeomorphism $f\colon X\to X$ of a compact metric space there are $\delta>0$ 
  and a non-negative continuous function $\liap\colon X^2_\delta\to\R$ such that 
  \begin{enumerate}
    \item $\liap(x,y)= 0$ if and only if $x=y$,
    \item $\ddot\liap(x,y)>0$ if $x\neq y$.
  \end{enumerate}
\end{coro}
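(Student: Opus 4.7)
The plan is a direct pullback from Theorem \ref{catExp}. Let $\Phi\colon \K_\delta(X)\to\R$ be the catenary function produced there for the induced homeomorphism $g(A)=f(A)$ on compact subsets of diameter at most $\delta$. Define the pairing map $\pi\colon X^2_\delta\to \K_\delta(X)$ by $\pi(x,y)=\{x,y\}$ and set
\[
  \liap(x,y)=\Phi(\pi(x,y))=\Phi(\{x,y\}).
\]
The two properties requested in the corollary will be read off from the corresponding properties of $\Phi$; the only real checks are the continuity and equivariance of $\pi$.

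First, $\pi$ is well defined because $\diam(\{x,y\})=\dist(x,y)\leq\delta$, and it is continuous for the Hausdorff metric, since $\dist_H(\{x,y\},\{x',y'\})\leq\max\{\dist(x,x'),\dist(y,y')\}$ uniformly in the arguments, regardless of whether either pair collapses to a singleton. Second, $\pi$ intertwines the product dynamics on $X\times X$ with $g$: $\pi(f(x),f(y))=\{f(x),f(y)\}=g(\{x,y\})=g(\pi(x,y))$, and analogously for $f^{-1}$. Hence the discrete second differences match: for $(x,y)$ such that $(f^{\pm 1}(x),f^{\pm 1}(y))$ still lie in $X^2_\delta$,
\begin{align*}
  \ddot\liap(x,y)
  &= \Phi(\{f(x),f(y)\})-2\Phi(\{x,y\})+\Phi(\{f^{-1}(x),f^{-1}(y)\})\\
  &= \dder\Phi(\{x,y\})=\Phi(\{x,y\})=\liap(x,y).
\end{align*}
Non-negativity of $\liap$ is inherited from $\Phi$; for property (1), $\Phi(\{x,y\})=0$ iff $\{x,y\}$ is a singleton iff $x=y$; and property (2) follows at once from $\ddot\liap(x,y)=\liap(x,y)>0$ off the diagonal.

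There is no substantial obstacle here: all the dynamical content was packaged into Theorem \ref{catExp}, and the corollary is essentially the remark that two-point configurations form a $g$-invariant subset of $\K_\delta(X)$ parametrized continuously by $X^2_\delta$, so that discrete second differences along the product dynamics $(x,y)\mapsto(f(x),f(y))$ pull back from discrete second differences along $g$. The only small caveat, handled by shrinking $\delta$ if necessary, is that the expression $\ddot\liap(x,y)$ only makes sense where $(f(x),f(y))$ and $(f^{-1}(x),f^{-1}(y))$ remain in $X^2_\delta$; uniform continuity of $f^{\pm 1}$ ensures this on a smaller product neighborhood of the diagonal.
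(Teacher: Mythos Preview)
Your proposal is correct and follows exactly the same approach as the paper: the paper's proof is the single line ``Restrict the catenary function of Theorem \ref{catExp} to pairs of points,'' and you have simply spelled out what that restriction means via the map $\pi(x,y)=\{x,y\}$, checking continuity, equivariance, and the domain caveat explicitly. Nothing further is needed.
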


\begin{proof}
 Restrict the catenary function of Theorem \ref{catExp}  
to pairs of points.
\end{proof}

\subsubsection{Catenary local metrics for an expansive homeomorphism}
Define $$X^3_\delta=\{(x,y,z)\in X\times X\times X: y,z\in B_\delta(x)\}.$$

\begin{df}
 A \emph{local metric} in $X$ is a continuous function 
 $D\colon X^3_\delta\to\R$, that will be denoted as $D(x,y,z)=D_x(y,z)$, satisfying: 
 \begin{enumerate}
  \item for each $x\in X$, $D_x\colon B_\delta(x)\times B_\delta(x)\to\R$ is a metric, 
  \item $D_x(x,y)=D_y(x,y)$ if $\dist(x,y)\leq\delta$.
 \end{enumerate}
\end{df}

\begin{df}
 A local metric $D$ is \emph{catenary} if 
 $\ddot D_x=D_x$ for all $x\in X$.
\end{df}

\begin{thm}
\label{teoExpCatLocMet}
 Every expansive homeomorphism of a compact metric space admits a catenary local metric.
\end{thm}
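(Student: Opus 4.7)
The plan is to reduce the construction of a catenary local metric for $f$ to a catenary pseudo-metric on the hyperspace $\K(X)$. Following the proof of Theorem \ref{catExp}, consider the induced homeomorphism $g\colon\K(X)\to\K(X)$ given by $g(A)=f(A)$. Expansivity of $f$ is equivalent to the set of singletons $\Lambda=\{\{x\}:x\in X\}$ being an isolated set for $g$, with isolating neighborhood $\K_\delta(X)$ for some $\delta>0$. Applying Theorem \ref{teoCatMet} to $g$ and $\Lambda$, I obtain a catenary pseudo-metric
$$
\di\colon\K_\delta(X)\times\K_\delta(X)\to\R
$$
satisfying $\dder\di=\di$ and such that $\di(A,B)=0$ if and only if $A=B$ or $A,B\in\Lambda$.

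The candidate for the local metric is
$$
D_x(y,z)=\di(\{x,y\},\{x,z\})
$$
for $y,z\in B_{\delta/2}(x)$, so that both $\{x,y\}$ and $\{x,z\}$ have diameter at most $\delta$ and thus lie in $\K_\delta(X)$. I would then verify each requirement in turn: continuity of $D$ follows from continuity of $\di$ together with continuity of $(a,b)\mapsto\{a,b\}\in\K(X)$ in the Hausdorff metric; symmetry, non-negativity, and the triangle inequality for $D_x$ are inherited from $\di$; and $D_x(y,z)=0$ forces either $\{x,y\}=\{x,z\}$, giving $y=z$, or $\{x,y\},\{x,z\}\in\Lambda$, which likewise forces $y=z=x$. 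Since $g(\{x,y\})=\{f(x),f(y)\}$, the catenary identity $\dder D_x=D_x$ reduces directly to $\dder\di=\di$ evaluated on the pair $(\{x,y\},\{x,z\})$.

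The step I expect to be most delicate is the compatibility condition $D_x(x,y)=D_y(x,y)$ required by the definition of a local metric. Here I would use the key feature that every singleton lies in $\Lambda$, so $\di(\{x\},\{y\})=0$ for all $x,y\in X$. The triangle inequality for the pseudo-metric $\di$ then yields
$$
|D_x(x,y)-D_y(x,y)|=|\di(\{x\},\{x,y\})-\di(\{y\},\{x,y\})|\leq\di(\{x\},\{y\})=0,
$$
giving the required equality. The uniform choice of $\delta$ across all of $X$ is provided by compactness, which closes the argument.
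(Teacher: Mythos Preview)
Your proof is correct and follows essentially the same route as the paper's. The only difference is that the paper works in the smaller hyperspace $F_2(X)=\{\{x,y\}:x,y\in X\}$ rather than in all of $\K(X)$; since expansivity makes the set of singletons isolated in either ambient space, and you only ever evaluate $\di$ on two-point sets, this is a harmless variation. Your explicit justification of the compatibility condition $D_x(x,y)=D_y(x,y)$ via the triangle inequality and $\di(\{x\},\{y\})=0$ is in fact more detailed than the paper's one-line assertion of the same identity.
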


\begin{proof}
Define $F_2(X)=\{\{x,y\}:x,y\in X\}$.
Expansivity is equivalent with the space of singletons $\Lambda=\{\{x\}:x\in X\}$ 
 being an isolated set of the homeomorphism 
 $g\colon F_2(X)\to F_2(X)$ given by $g(\{x,y\})=\{f(x),f(y)\}$. 
 By Theorem \ref{teoCatMet} there is a catenary pseudo-metric $\di\colon N\times N\to \R$ 
 where $N\subset F_2(X)$ 
 is an isolating neighborhood of $\Lambda$.
 Take $\delta>0$ such that if $\dist(x,y)\leq \delta$ then $\{x,y\}\in N$ and
 define 
 $$D_x(y,z)=\di(\{x,y\},\{x,z\})$$ 
 if $y,z\in B_\delta(x)$.
 
 Let us prove that $D_x$ is a metric in $B_\delta(x)$. 
 It is easy to see that $D_x(y,z)\geq 0$, $D_x(y,z)=D_x(z,y)$ and $D_x(y,y)=0$ for all $y,z\in B_\delta(x)$. 
 Suppose that $D_x(y,z)=0$. Then $\di(\{x,y\},\{x,z\})=0$. 
 Since $\di$ is a pseudo-metric for $\Lambda$ we have that $y=z$. 
 Let us show the triangular inequality: 
 \[
 \begin{array}{ll}
  D_x(y,z)+D_x(z,u) & =\di(\{x,y\},\{x,z\})+\di(\{x,z\},\{x,u\})\\ 
  & \geq \di(\{x,y\},\{x,u\})\\
  &=D_x(y,u).  
 \end{array}
 \]
 
 We also have that $D_x(x,y)=\di(\{x,x\},\{x,y\})=\di(\{y,y\},\{x,y\})=D_y(x,y)$.
The continuity of $D$ follows by the continuity of $\di$. 
Finally, the catenary condition of $D$ follows by the corresponding property of $\di$. 
This finishes the proof.
\end{proof}

\subsubsection{Catenary metrics}
\label{secCatMetExp}
Here we consider the problem of constructing catenary metrics instead of pseudo-metrics.

\begin{df}
 A local metric $D$ is \emph{locally minimizing} if there is $\delta>0$ such that if 
 $\dist(x,y)<\delta$ then
 $D_x(x,y)\leq D_z(x,y)$ for all $z\in B_\delta(x)\cap B_\delta(y)$.
\end{df}

\begin{ejp}
 Let $M$ be a compact manifold with a Riemannian metric. 
 Denote by $\|\cdot\|$ the induced norm, consider the exponential map $\exp_x\colon T_xM\to M$ 
 and define $T^r_xM=\{v\in T_M:\|v\|\leq r\}$. 
 Take $r$ such that $\exp_x\colon T^r_xM\to M$ is a homeomorphism onto its image. 
 We can define a local metric by $D_x(y,z)=\|\exp_x^{-1}(y)-\exp_x^{-1}(z)\|$. 
 Moreover, it is locally minimizing.
\end{ejp}

Returning to our metric space $X$, given $x,y\in X$ define 
$$C_n^\delta(x,y)=\{a\in X^n:a_1=x,a_n=y,\dist(a_i,a_{i+1})\leq\delta\,\forall i=1,\dots,n-1\}$$
where $a_1,\dots,a_n$ are the coordinates of $a$, i.e., $a=(a_1,\dots,a_n)$.

\begin{rmk}
 Notice that for some $x,y$ it could be the case that $C_n^\delta(x,y)=\emptyset$ for all $n\geq 2$. 
 Also note that the relation $x\sim_\delta y$ if there is $n\geq 2$ such that $C_n^\delta(x,y)\neq \emptyset$, is an 
 equivalence relation on $X$. 
 This relation makes a partition of $X$ that allows us to separate the study into these equivalence classes. 
 For simplicity, we will assume that $X$ is connected and consequently for all $x,y$ and for all $\delta>0$ 
 there is $n\geq 2$ such that $C_n^\delta(x,y)\neq\emptyset$. 
\end{rmk}

\begin{df}
 A metric $\rho$ in $X$ is a \emph{catenary metric} if it is a metric defining the topology of $X$ and 
 there is $\delta>0$ such that $\ddot\rho(x,y)=\rho(x,y)$ whenever $\dist(x,y)\leq\delta$.
\end{df}

We do not know if every expansive homeomorphism of a compact metric space admits a catenary metric. 
The following is a partial result in this direction.

\begin{thm}
 If $D$ is a catenary and locally minimizing local metric on $X$ then 
 \[
    \rho(x,y)=\inf_{n\geq 2}\inf_{a\in C^\delta_n(x,y)}\sum_{i=1}^{n-1}D_{a_i}(a_i,a_{i+1})
 \]
is a catenary metric.
\end{thm}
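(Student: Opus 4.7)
The plan is to establish $\rho(x,y)=D_x(x,y)$ for all sufficiently close pairs and to derive every other property from this local identification. First I would check that $\rho$ is a pseudo-metric. Non-negativity is immediate; symmetry follows because property 2 in the definition of local metric yields $D_{a_i}(a_i,a_{i+1})=D_{a_{i+1}}(a_i,a_{i+1})$, so reversing any chain produces an admissible chain from $y$ to $x$ with the same sum; the triangle inequality comes from concatenating near-optimal chains from $x$ to $z$ with those from $z$ to $y$.

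The key step is to show that $\rho(x,y)=D_x(x,y)$ once $\dist(x,y)$ is small. The upper bound uses the trivial chain $(x,y)\in C_2^\delta(x,y)$. For the lower bound, introduce the auxiliary function $d(x,y):=D_x(x,y)=D_y(x,y)$, which is symmetric by property 2. The locally minimizing hypothesis, the triangle inequality of the metric $D_z$, and a second application of property 2 combine to give, for $z\in B_\delta(x)\cap B_\delta(y)$,
\[
  d(x,y)=D_x(x,y)\leq D_z(x,y)\leq D_z(x,z)+D_z(z,y)=d(x,z)+d(z,y),
\]
so $d$ satisfies a local triangle inequality. For any chain $a\in C_n^\delta(x,y)$ whose points all lie in a sufficiently small ball around $x$, telescoping this inequality yields $\sum D_{a_i}(a_i,a_{i+1})=\sum d(a_i,a_{i+1})\geq d(x,y)=D_x(x,y)$. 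Chains that escape such a ball must be handled separately: continuity of $D$ on the compact space $X$ provides a uniform lower bound of the form $D_z(u,v)\geq c\,\dist(u,v)$ on the relevant domain, so any chain whose total $\dist$-length is much larger than $\dist(x,y)$ has $D$-sum exceeding the trivial bound $D_x(x,y)$. Hence the infimum defining $\rho$ is effectively taken over confined chains, and $\rho(x,y)\geq D_x(x,y)$.

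From the local identification $\rho=D_x(x,\cdot)$ everything else follows quickly. Since each $D_x$ is a genuine metric on $B_\delta(x)$, $\rho$ separates points and is a metric; moreover $\rho$-balls and $\dist$-balls are comparable near each point, so $\rho$ defines the topology of $X$. For the catenary condition, by uniform continuity of $f$ and $f^{-1}$ on the compact space one may shrink the scale enough that the three pairs $(x,y)$, $(f(x),f(y))$, $(f^{-1}(x),f^{-1}(y))$ simultaneously satisfy $\rho(\cdot,\cdot)=D_{(\cdot)}(\cdot,\cdot)$; then $\ddot\rho(x,y)$ coincides term by term with $\ddot D_x(x,y)$, which equals $D_x(x,y)=\rho(x,y)$ by hypothesis. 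The main obstacle is the lower bound in the key identification: it is precisely here that the locally minimizing assumption is essential, since without it the triangle inequality for $d$ can fail and chains with shifting centers could in principle undercut $D_x(x,y)$.
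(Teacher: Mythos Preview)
Your approach matches the paper's: establish the pseudo-metric axioms, prove the local identification $\rho(x,y)=D_x(x,y)$ for close pairs via the locally minimizing hypothesis, and read off the metric, topological, and catenary properties from that identification. In fact you supply considerably more detail than the paper, which simply asserts that locally minimizing ``implies that $\rho(x,y)=D_x(x,y)$ if $\dist(x,y)<\delta$'' and moves on; you correctly isolate the local triangle inequality $d(x,y)\leq d(x,z)+d(z,y)$ as the mechanism behind the lower bound and recognize that chains wandering far from $x$ need a separate argument.

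The one step that is not fully justified is the claimed uniform linear bound $D_z(u,v)\geq c\,\dist(u,v)$. Continuity and positivity of $D$ off the diagonal do not by themselves yield a linear comparison with $\dist$ (two metrics defining the same topology on a compact space need not be bi-Lipschitz equivalent). A cleaner fix, still in the spirit of your argument, is this: take any chain with a first exit from $B_{\delta/3}(x)$ at index $k$; then either $\dist(x,a_{k-1})\geq\delta/6$ or $\dist(a_{k-1},a_k)\geq\delta/6$, so by compactness the partial sum already exceeds the positive constant $m:=\min\{D_w(u,v):\dist(u,v)\geq\delta/6,\ u,v\in\overline{B_\delta(w)}\}$. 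Choosing $\eta$ so small that $D_x(x,y)<m$ whenever $\dist(x,y)<\eta$ then disposes of escaping chains, while confined chains are handled by your telescoping. With this adjustment the argument is complete.
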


\begin{proof}
It is easy to prove the triangular inequality for $\rho$, 
that $\rho(x,x)=0$ and $\rho(x,y)=\rho(y,x)\geq 0$ for all $x,y\in X$. 
Take $\delta>0$ such that if 
 $\dist(x,y)<\delta$ then
 $D_x(x,y)\leq D_z(x,y)$ for all $z\in B_\delta(x)\cap B_\delta(y)$.
This implies that $\rho(x,y)=D_x(x,y)$ if $\dist(x,y)<\delta$. 
Then, $x\neq y$ implies that $\rho(x,y)\neq 0$. 
Also, $\rho$ defines the topology of $X$. 
The catenary condition of $\rho$ follows by the corresponding property of $D$.
\end{proof}

\begin{ejp}[Shift map]
 Let $X=2^\Z$ be the space of sequences $\dots,x_{-1},x_0,x_1,x_2,\dots$ 
 such that $x_n\in\{0,1\}$ for each $n\in\Z$. 
 Consider the distance 
 \[
  \di(x,y)=\sum_{n\in\Z}\frac{|x_n-y_n|}{\lambda^{|n|}}
 \]
where $\lambda>1$, $x,y\in X$. Define $f\colon X\to X$, the shift map, 
by $(f(x))_n=x_{n+1}$. It is easy to see that this metric satisfies 
$\ddot \di(x,y)=a\di(x,y)$ where $a=\lambda+\lambda^{-1}-2$ and $x,y$ are close enough. 
For $\lambda=\frac 1 2(3+\sqrt 5)$ we have that $\di$ is a catenary metric.
\end{ejp}

\begin{ejp}[Pseudo-Anosov diffeomorphisms]
 Let $X$ be a compact surface without boundary. 
 It is known \cites{Lew89,Hi} that every expansive homeomorphism 
 $f\colon X\to X$ is conjugate with a pseudo-Anosov diffeomorphism. 
 By definition, pseudo-Anosov diffeomorphisms have two 
 transverse and invariant singular foliations $F_s,F_u$ with transverse measures $\mu_s,\mu_u$. 
 There is a parameter $\lambda>1$ such that 
 the unstable measure is expanded $\lambda$ by the diffeomorphism and 
 the stable measure is contracted by $\lambda$ by $f$. 
 These measures define naturally a metric $\di$ on $X$ 
 satisfying $\ddot \di(x,y)=a\di(x,y)$ where $a=\lambda+\lambda^{-1}-2$ and $x,y$ are close enough. 
\end{ejp}

\subsection{Cw-expansive homeomorphisms}
Let $(X,\dist)$ be a compact metric space.
Recall that a \emph{continuum} is a compact connected set.
Denote by $$\KC(X)=\{A\in\K(X):A\hbox{ is continuum} \},$$ the space of subcontinua of $X$, and for $\delta\geq 0$
define $$\KC_\delta(X)=\{A\in\KC(X):\diam(A)\leq\delta\}.$$
Following \cite{Kato} we say that a homeomorphism $f\colon X\to X$ is \emph{cw-expansive} if there is 
$\delta>0$ such that if 
$f^n(A)\in\KC_\delta(X)$ for all $n\in \Z$ then $A\in \KC_0(X)$. 
We have a result similar to Theorem \ref{catExp}.
We have to replace $\K(X)$ with $\KC(X)$ in the domain of the function $\liap$.

\begin{thm}
\label{catCwExp}  
For every cw-expansive homeomorphism $f\colon X\to X$ of a compact metric space 
there are $\delta>0$ and a continuous function 
$\liap\colon \KC_\delta(X)\to\R$ such that: 
\begin{enumerate}
  \item $\liap(A)\geq 0$ for all $A\in N$, with equality if and only if $A$ is a singleton,
  \item $\dder \liap=\liap$.
\end{enumerate}
\end{thm}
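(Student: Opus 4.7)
The plan is to run the proof of Theorem \ref{catExp} verbatim, replacing the hyperspace $\K(X)$ of compact subsets by the hyperspace $\KC(X)$ of subcontinua. The point is that cw-expansivity is exactly the translation of ``singletons form an isolated set'' once one works in the correct ambient space.

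First I would record that $(\KC(X),\dist_H)$ is a compact metric space and that the induced map $g\colon\KC(X)\to\KC(X)$, $g(A)=f(A)$, is a homeomorphism. Compactness holds because a Hausdorff limit of continua is a continuum, so $\KC(X)$ is closed in the compact space $\K(X)$; the rest is immediate from $f$ being a homeomorphism. The subspace $\Lambda=\{\{x\}:x\in X\}=\KC_0(X)$ is then a compact $g$-invariant copy of $X$ inside $\KC(X)$.

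Next I would verify that $\Lambda$ is an isolated set for $g$, with $\KC_\delta(X)$ serving as an isolating neighborhood when $\delta>0$ is a cw-expansivity constant for $f$. Continuity of the diameter function gives that $\KC_\delta(X)=\diam^{-1}([0,\delta])$ is closed, hence compact. If $\dist_H(A,\{x\})<\delta/2$ then $A\subset B_{\delta/2}(x)$, so $\diam A\leq\delta$; thus $\KC_\delta(X)$ is a neighborhood of $\Lambda$. Finally, the definition of cw-expansivity says exactly that $g^n(A)\in\KC_\delta(X)$ for every $n\in\Z$ forces $A\in\KC_0(X)=\Lambda$, which is the isolating condition.

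Then I would apply Theorem \ref{teoLyapDisc} to the homeomorphism $g$ on the compact metric space $\KC(X)$ and its isolated set $\Lambda$, obtaining a catenary function on some isolating neighborhood $N$ of $\Lambda$. Since the sets $\KC_\delta(X)$ form a neighborhood basis of $\Lambda$ as $\delta\to 0$, shrinking $\delta$ if necessary ensures $\KC_\delta(X)\subset N$; restricting $\liap$ yields a continuous function on $\KC_\delta(X)$ that vanishes exactly on singletons, is positive elsewhere, and satisfies $\dder\liap=\liap$, which are the two properties in the statement. The construction presents no essential obstacle beyond what was already overcome in Theorem \ref{teoLyapDisc}; the only thing to verify is the equivalence between cw-expansivity and the isolated-set condition for $g$, which is a direct rewriting of the definitions.
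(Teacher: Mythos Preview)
Your proposal is correct and is exactly the approach the paper takes: the paper's proof simply reads ``Is similar to the proof of Theorem \ref{catExp}'', and you have spelled out precisely that similarity---replace $\K(X)$ by $\KC(X)$, observe that cw-expansivity is equivalent to the singletons forming an isolated set for the induced homeomorphism on $\KC(X)$, and invoke Theorem \ref{teoLyapDisc}.
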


\begin{proof}
  Is similar to the proof of Theorem \ref{catExp}.
\end{proof}

\subsection{Other forms of expansivity} 
In addition to cw-expansivity there are many other variation of the concept introduced by Utz. 
Let us mention some of them and indicate if an isolated set can be found in order to apply our results.
Given $N>0$, a homeomorphism $f\colon X\to X$ is $N$-\emph{expansive} (Morales \cite{Mo}) if there is $\delta>0$ 
such that if $\diam(f^n(A))<\delta$ for all $n\in\Z$ and some subset $A\subset X$ 
then $A$ has at most $N$ points. 
Notice that $1$-expansivity is expansivity. 
It is natural to consider the space $F_n(X)=\{A\in \K(X):|A|\leq n\}$ 
where $|A|$ denotes the cardinality of $A$. 
Note that $F_n$ are invariant compact subsets of $\K(X)$. 
Define $Y=(F_N(X)\setminus F_{N-1}(X))\cup F_1(X)$. It is invariant by $f$.
We have that $N$-expansivity is equivalent with: 
there is an open set $U\subset Y$ 
such that $F_1(X)\subset U$ 
and if $f^n(A)\in U$ for all $n\in\Z$ then $A\in F_1(X)$. 
It looks like the definition of isolated set but such an open set $U$ 
cannot have compact closure unless $X$ is a finite set. 
Therefore, we are not able to apply our result to $N$-expansivity.

We consider a definition given in \cite{Ar}.
For $\delta\geq 0$, a set $A\subset X$ is $\delta$-\emph{separated} if for all $x\neq y$, 
$x,y\in A$, it holds that $\dist(x,y)>\delta$. 
The $\delta$-\emph{cardinality} of a set $A$ is 
\[
 |A|_\delta=\sup\{|B|:B\subset A\hbox{ and }B\hbox{ is }\delta\hbox{-separated}\}.
\]
Given integer numbers $m>n\geq 1$ we say that $f\colon X\to X$ is 
$(m,n)$-\emph{expansive} if there is $\delta>0$ such that 
if $|A|=m$ then there is $k\in\Z$ such that $|f^ k(A)|_\delta>n$. 
For the special case $m=n-1$ we have that $(n-1,n)$-expansivity is equivalent with the set $F_{n-1}(X)$
being isolated in $F_n(X)$. For $m<n-1$ the situation is similar to $N$-expansivity explained above.

We were not able to find a connection between 
isolated sets and
point-wise expansivity
\cite{Reddy70}
or h-expansivity
\cite{Bowen72}. 

\begin{bibdiv}
\begin{biblist}

\bib{Ab}{article}{
author={F. Abadie},
title={Enveloping actions and Takai duality for partial actions},
journal={Journal of Functional Analysis},
volume={197},
year={2003},
pages={14--67}}

\bib{Ar}{article}{
author={A. Artigue},
title={Finite sets with fake observable cardinality}, 
journal={Bulletin of the Korean Mathematical Society}, 
year={2014}}

\bib{AS}{article}{
author={J. Auslander},
author={P. Seibert},
title={Prolongations and stability in dynamical systems},
journal={Annales de l'Inst. Fourier}, 
volume={14},
year={1964}, 
pages={237--267}}

\bib{BaSz}{book}{
author={N. P. Bhatia},
author={G. P. Szeg\"o},
title={Dynamical Systems: Stability Theory and Applications},
publisher={Springer-Verlag},
series={Lect. Not. in Math.},
volume={35},
year={1967}}

\bib{Bowen72}{article}{
author={R. Bowen},
title={Entropy-expansive maps},
journal={Trans. of the AMS},
volume={164},
year={1972},
pages={323--331}}

\bib{BW}{article}{
author={R. Bowen and P. Walters}, 
title={Expansive one-parameter flows}, 
journal={J. Diff. Eq.}, year={1972}, pages={180--193},
volume={12}}

\bib{CE}{article}{
journal={Trans. of the Amer. Math. Soc.},
volume={158}, 
year={1971},
title={Isolated Invariant Sets and Isolated Blocks},
author={C. Conley},
author={R. Easton}}

\bib{Conley78}{book}{
author={C. Conley},
title={Isolated invariant sets and the Morse index},
series={Conference Board of the Mathematical Sciences}, 
volume={38},
year={1978},
publisher={AMS}}

\bib{Conley88}{article}{
author={C. Conley},
year={1988},
title={The gradient structure of a flow: I},
journal={Ergodic Theory and Dynamical Systems}, 
volume={8}, 
pages={11--26}}

\bib{Ch}{article}{
author={R. C. Churchill},
title={Isolated Invariant Sets in Compact Metric Spaces},
journal={J. Diff. Eq.},
volume={12}, 
pages={330--352},
year={1972}}

\bib{DH}{article}{
author={J. Denzler},
author={A. M. Hinz},
title={Catenaria Vera - The True Catenary},
journal={Expo. Math.},
year={1999},
volume={17},
pages={117--142}}

\bib{Fa}{article}{
author={A. Fathi},
title={Expansiveness, hyperbolicity and Hausdorff dimension},
journal={Commun. Math. Phys.},
volume={126},
year={1989},
pages={249--262}}

\bib{Hi}{article}{
author={K. Hiraide},
title={Expansive homeomorphisms of compact surfaces are pseudo-Anosov},
journal={Osaka J. Math.}, 
volume={27},
year={1990}, 
pages={117--162}}

\bib{HY}{book}{
author={J. Hocking},
author={G. Young},
title={Topology},
publisher={Addison-Wesley Publishing Company, Inc.},
year={1961}}

\bib{Hur}{article}{
author={M. Hurley},
title={Lyapunov Functions and Attractors in Arbitrary Metric Spaces},
year={1998},
journal={Proc. of the Am. Math. Soc.},
volume={126},
pages={245--256}}

\bib{IN}{book}{
author={A. Illanes},
author={S. B. Nadler Jr.},
title={Hyperspaces: Fundamentals and Recent Advances},
year={1999},
publisher={Marcel Dekker, Inc.}}

\bib{Kato}{article}{
author={H. Kato},
title={Continuum-wise expansive homeomorphisms},
journal={Can. J. Math.},
volume={45},
number={3},
year={1993},
pages={576--598}}

\bib{Kra}{book}{
author={N. N. Krasovskii},
title={Stability of Motion},
publisher={Stanford University Press},
year={1963}}

\bib{Lew80}{article}{
author={J. Lewowicz},
year={1980},
title={Lyapunov Functions and Topological Stability},
journal={J. Diff. Eq.},
volume={38},
pages={192--209}}

\bib{Lew89}{article}{
author={J. Lewowicz},
title={Expansive homeomorphisms of surfaces},
journal={Bol. Soc. Bras. Mat.}, 
volume={20}, 
pages={113--133}, 
year={1989}}

\bib{Massera49}{article}{
author={J. L. Massera},
title={On Liapunoff's Conditions of Stability},
journal={Ann. of Math.},
volume={50},
number={3},
pages={705--721},
year={1949}}

\bib{Mo}{article}{
author={C. A. Morales},
title={A generalization of expansivity},
journal={Discrete Contin. Dyn. Syst.},
volume={32},
year={2012}, 
number={1},
pages={293--301}}


\bib{Pa}{article}{
author={M. Paternain},
title={Expansive flows and the fundamental group},
journal={Bull. Braz. Math. Soc.},
number={2},
volume={24},
pages={179--199},
year={1993}}

\bib{Reddy70}{article}{
author={W. L. Reddy},
title={Pointwise expansion homeomorphisms},
journal={J. Lond. Math. Soc.},
year={1970},
volume={2},
pages={232--236}} 

\bib{Ur}{article}{
author={R. Ures},
title={On expansive covering maps},
journal={Publicaciones Matemáticas del Uruguay},
volume={3},
year={1990}, 
pages={59--67}}

\bib{Utz}{article}{
author={W. R. Utz},
title={Unstable homeomorphisms},
journal={Proc. Amer. Math. Soc.},
year={1950},
volume={1},
number={6},
pages={769--774}}

\bib{Vi}{article}{
author={J. L. Vieitez},
title={Lyapunov functions and expansive diffeomorphisms on 3D-manifolds},
journal={Ergod. Theor. Dyn. Syst.}, 
volume={22},
year={2002},
pages={601-632}}

\bib{Whitney33}{article}{
author={H. Whitney},
title={Regular families of curves},
journal={Ann. of Math.}, 
number={34},
year={1933}, 
pages={244--270}}

\bib{WY}{article}{
author={F. W. Wilson Jr.},
author={J. A. Yorke},
title={Lyapunov functions and isolating blocks},
journal={J. Diff. Eq.},
volume={13},
year={1973},
pages={106--123}}
\end{biblist}
\end{bibdiv}
\end{document}